\newtheorem{theorem}{Theorem}
\newtheorem{proposition}{Proposition}
\newtheorem{lemma}{Lemma}
\newtheorem{definition}{Definition}
\newtheorem{corollary}{Corollary}
\theoremstyle{definition}
\newtheorem{remark}{Remark}
  \def\diag{\mathop{\rm diag}}
  \def\Ex{{\bf E}}
  \def\Pb{{\bf P}}
  \def\var{{\rm Var}}
  \def\tr{{\rm Tr}}
  \def\emptyset{\varnothing}
  \def\1{\boldsymbol{1}}
\def\sfT{\textsf{T\!}}
\def\mX{\mathscr X}
\def\mfc{\mathfrak c}
\def\sfh{\textsf{h}}
\def\sfr{\textsf{r}}
\def\mP{\mathscr P}
\def\mO{\mathcal O}
\def\eps{\varepsilon}
\def\cal{\mathcal}
\def\xxi{{\boldsymbol{\xi}}}
\def\zzeta{{\boldsymbol{\zeta}}}
\def\llambda{{\boldsymbol{\lambda}}}
\def\mB{\boldsymbol{B}}
\def\T{\top}
\def\bds#1{\boldsymbol{#1}}
\newcommand{\bbA}{{\mathbb A}}
\newcommand{\bbB}{{\mathbb B}}
\newcommand{\bbD}{{\mathbb D}}
\newcommand{\bbG}{{\mathbb G}}
\newcommand{\bbH}{{\mathbb H}}
\newcommand{\bbN}{{\mathbb N}}
\newcommand{\bbR}{{\mathbb R}}
\newcommand{\bbZ}{{\mathbb Z}}
\def\E{{\bf E}}
\def\P{{\bf P}}
\def\T{{\bf T}}
\def\cale{{\cal E}}
\def\calf{{\cal F}}
\def\calk{{\cal K}}
\def\caln{{\cal N}}
\def\calp{{\cal P}}
\def\calt{{\cal T}}
\def\calv{{\cal V}}
\def\calx{{\cal X}}
\def\caly{{\cal Y}}
\def\calz{{\cal Z}}
\def\ii{{\!\:\rm i}}
\def\ep{\epsilon}
\def\half{\frac{1}{2}}
\def\iku{\rightarrow}
\def\nn{\nonumber}
\def\be{\begin{equation}}
\def\ee{\end{equation}}
\def\bea{\begin{eqnarray}}
\def\eea{\end{eqnarray}}
\def\beas{\begin{eqnarray*}}
\def\eeas{\end{eqnarray*}}
\def\KIJ{K_{I\!J}}
\def\l{\left}
\def\r{\right}
\def\msE{\mathscr E}
\def\mX{\boldsymbol{X}}
\def\mbeta{\boldsymbol{\beta}}
\def\msigma{\boldsymbol{\sigma}}
\begin{document}

\begin{frontmatter}

\title{Second-order asymptotic expansion for a non-synchronous covariation estimator}

\runtitle{Expansion for the HY-estimator}

\begin{aug}

\author{\fnms{Arnak} \snm{Dalalyan}\thanksref{a}\ead[label=e1]{dalalyan@imagine.enpc.fr}}
\and
\author{\fnms{Nakahiro} \snm{Yoshida}\thanksref{b}\ead[label=e2]{nakahiro@ms.u-tokyo.ac.jp}}
\address[a]{LIGM/IMAGINE, Ecole des Ponts ParisTech, Universit\'e Paris-Est, \printead{e1}}
\address[b]{University of Tokyo and Japan Science and Technology Agency, \printead{e2}}

\runauthor{Dalalyan and Yoshida}
\end{aug}

\begin{abstract}
In this paper, we consider the problem of estimating the covariation of two diffusion processes when observations are 
subject to non-synchronicity. Building on recent papers \cite{Hay-Yos03, Hay-Yos04}, we derive second-order asymptotic expansions for 
the distribution of the Hayashi-Yoshida estimator in a fairly general setup including random sampling schemes and non-anticipative 
random drifts. The key steps leading to our results are a second-order decomposition of the estimator's distribution in the Gaussian 
set-up, a stochastic decomposition of the estimator itself and an accurate evaluation of the Malliavin covariance. To give a concrete 
example, we compute the constants involved in the resulting expansions for the particular case of sampling scheme generated by two 
independent Poisson processes.
\end{abstract}

\begin{abstract}[language=french]
Dans cet article, nous consid\'erons le probl\`eme d'estimation de la covariation de deux processus de diffusion
observ\'es de fa\c{c}on asynchrone. Nous nous pla\c{c}ons dans le cadre pr\'esent\'e dans \cite{Hay-Yos03, Hay-Yos04} et \'etablissons un d\'eveloppement
asymptotique au second ordre de la loi de l'estimateur de Hayashi-Yoshida. Ce d\'eveloppement est valable pour les drifts al\'eatoires non-anticipatifs 
et pour des pas d'\'echantillonnage irr\'eguliers, \'eventuellement al\'eatoires, mais ind\'ependant des processus observ\'es. L'approche utilis\'ee pour obtenir
les principaux r\'esultats peut \^etre d\'ecompos\'ee en trois \'etapes. La premi\`ere consiste \`a \'etablir un d\'eveloppement au second-ordre de la loi de l'estimateur
dans le cadre Gaussien. La deuxi\`eme est l'obtention d'une d\'ecomposition stochastique de l'estimateur lui-m\^eme et la dernière est l'\'evaluation de la covariance
de Malliavin. A titre d'exemple, nous calculons les constantes du d\'eveloppement  au second ordre dans le cas o\`u l'\'echantillonnage est obtenu par deux processus
de Poisson ind\'ependants.
\end{abstract}

\begin{keyword}
\kwd{Edgeworth expansion}
\kwd{covariation estimation} 
\kwd{diffusion process} 
\kwd{asynchronous observations}
\kwd{Poisson sampling}
\end{keyword}



\end{frontmatter}

\section{Introduction}

In the last decade, studies on covariance estimation has attracted considerable attention thanks to 
the applications in mathematical finance and econometrics; see \textit{e.g.} Andersen and Bollerslev 
\cite{AB(98)}, Comte and Renault \cite{CR98}, Andersen \textit{et al.}\ \cite{ABDE01,ABDL01}, Barndorff-Nielsen and 
Shephard \cite{BNS04a}. All these papers consider the situation where two diffusion processes are observed at 
the same discrete instants. In contrast with this, covariance estimation under a ``non-synchronous'' sampling 
scheme has rarely been treated theoretically in spite of its importance in the analysis of high-frequency financial data 
\cite{Shanken,Lo,Scholes}. 
The first contributions to the statistical inference for covariance estimation with non-synchronous data have been 
made by Hayashi and  Yoshida~\cite{Hay-Yos03, Hay-Yos04}. They proposed an estimator of the covariation and explored 
its statistical properties such as the consistency and the asymptotic normality. Interestingly, it follows from the 
results  in~\cite{Hay-Yos04} that the drifts of the observed diffusions do not affect the asymptotic variance of the 
covariance estimator. The aim of the present paper is to complement the results in \cite{Hay-Yos03,Hay-Yos04} by 
establishing a second-order asymptotic expansion for the distribution of the covariance estimator. In particular, we 
get explicit expressions that have the advantage of reflecting the impact of drifts on the asymptotic distribution 
of the estimator.

One common approach to cope with non-synchronicity is the following. First, two regularly spaced time series are generated 
by interpolating the observed non-synchronous data. Then the realized covariance estimator is computed for the 
interpolated time series. However, it is known that such a synchronization technique causes estimation bias, which is often 
referred to as the {\it Epps effect} \cite{Epps}. Another estimator of the covariance, based on the harmonic analysis, has been 
proposed by Malliavin and Mancino~\cite{Mal-Man02}. In the case where in addition to the non-synchronicity the data is contaminated
by a microstructure noise, estimators of the covariance have been proposed by Palandri \cite{Palandri}, Barndorff-Nielsen 
\textit{et al.} \cite{BNH} and Zhang \cite{Zhang06}. A detailed account on covariance estimation for 
non-synchronous data can be found in~\cite{Hay-Yos06} and \cite{Zhang06}. 

In order to present the framework and to describe our contributions, we need some notation.
Let $\mX=(X_1,X_2)$ be a two dimensional diffusion process given by
\bea\label{071219-1}
d\mX_t=\mbeta_t\,dt+\diag(\msigma_t)\,d\mB_t,
\eea
where
$\mB=((B_{1,t},B_{2,t})^\T,\;t\ge 0)$ is a two dimensional Gaussian process
with independent increments, zero mean and covariance matrix
\begin{equation*}
\Ex[\mB_t\cdot \mB^\T_t]=
\begin{pmatrix}
t & \int_0^t \rho_s\,ds\\
\int_0^t \rho_s\,ds & t
\end{pmatrix},\qquad \forall t\ge 0.
\end{equation*}
In (\ref{071219-1}), $\mbeta=(\beta_1,\beta_2)^\T$ is a progressively measurable process, $\msigma=(\sigma_1,\sigma_2)^\T$ is a deterministic function and 
$\diag(\msigma)$ stands for the diagonal matrix having $\sigma_i$ as $i^{\text{th}}$ diagonal entry, $i=1,2$. In what follows, 
we restrict our attention to the case when $\sigma_1$, $\sigma_2$ and $\rho$ are deterministic functions; the functions $\sigma_i$, 
$i=1,2$ take positive values while $\rho$ takes values in the interval $[-1,1]$. Note that the marginal processes $B_{1}$ and 
$B_2$ are Brownian motions (BM). Moreover, we can define a process $B^*_t$ such that $(B_{1,t},B^*_t)_{t\ge 0}$  is a two-dimensional BM
and $dB_{2,t}=\rho_t dB_{1,t}+\sqrt{1-\rho_t^2}\,dB^*_{t}$ for every $t\geq 0$.

We will assume that the processes $X_1$ and $X_2$ are observed respectively at
the time instants $0=S^0<S^1<\ldots<S^{N_1}=T$ and $0=T^0<\ldots<
T^{N_2}=T$. Let us denote $I^{i}=(S^{i-1},S^{i}]$ and $J^j=(T^{j-1},T^{j}]$.
The families
$\Pi^1=\{I^i,i=1,\ldots,N_1\}$ and $\Pi^2=\{J^j,j=1,\ldots,N_2\}$ are partitions
of the interval $[0,T]$. We will also use the notation
$\Delta_i X_1=X_{1,S^{i}}-X_{1,S^{i-1}}$ and
$\Delta_j X_2=X_{2,T^{j}}-X_{2,T^{j-1}}$.

In this paper, we are concerned with the problem of estimating the parameter
\begin{equation*}
\theta=\int_0^T \rho_t\sigma_{1,t}\sigma_{2,t}\,dt=\langle X_{1},X_{2}\rangle_T
\end{equation*}
based on the observations $(X_{1,S^i},X_{2,T^j},i=0,\ldots,N_1,
j=0,\ldots,N_2)$. The parameter $\theta$ represents the covariance between the martingale parts
of $X_1$ and $X_2$. Therefore, it can be used to evaluate the correlation between the two
BMs $B_1$ and $B_2$.

If the processes $X_1$ and $X_2$ are synchronously observed, the sum of cross products
$\sum_{i=1}^{N_1}\Delta_iX_1\cdot\Delta_i X_2$ is a natural
estimator of $\theta$. Indeed, it converges in probability
to $\theta$ when the maximum lag of the sampling times tends to $0$
in probability. In the field of statistical inference for stochastic processes,
this fact has been applied to estimating the volatility and the covariation between 
semimartingales. The asymptotic distributions are well investigated; see Dacunha-Castelle and Florens-Zmirou 
\cite{Dac-Flo86}, Florens-Zmirou \cite{Flo91}, Prakasa Rao \cite{Pra83,Pra88}, 
Yoshida \cite{Yos92b}, Genon-Catalot and Jacod \cite{Gen-Jac93}, Kessler \cite{Kes97}, 
and Mykland and Zhang \cite{MZ05a}.

An estimator of $\theta$, which is unbiased when the drift $\mbeta$ is identically zero, has been 
proposed in  \cite{Hay-Yos03}. Henceforth called HY-estimator, it is defined as follows:
\begin{equation}\label{est}
\hat\theta=\sum_{i=1}^{N_1}\sum_{j=1}^{N_2} \Delta_iX_1\cdot\Delta_j X_2
\cdot\mathbf{1}(I^i\cap J^j\not=\emptyset).
\end{equation}
It is established in \cite{Hay-Yos03} that under mild assumptions, $\hat\theta$ is consistent as the maximum lag of the
sampling times tends to $0$ in probability. Kusuoka and Hayashi \cite{Kus-Hay04} extended the 
consistency result to a more general sampling scheme. Asymptotic normality of the HY-estimator was proved in 
Hayashi and Yoshida \cite{Hay-Yos04} under the assumption that the sampling times are independent of the process $\mX$. 
For related literature, see Hoshikawa \textit{et al.\ }\cite{HKNN06}, Griffin and Oomen \cite{GO06}, Robert and 
Rosenbaum \cite{RR08} and Voev and Lunde \cite{VL06}. 
The general case of a sampling scheme depending on the process $\mX$ has been studied in Hayashi and 
Yoshida~\cite{Hay-Yos06,Hay-Yos08}, where a stochastic analytic proof of the asymptotic mixed normality of the 
HY-estimator is presented. An estimator for the variance of the HY-estimator under the assumption that the observed 
process $\mX$ has no drift has been recently proposed by Mykland~\cite{Myk06}.

In the present work, the main emphasis is put on the higher-order asymptotic 
behavior of the HY-estimator. Note that the theory of asymptotic expansions is one of chapters of 
statistics that received a revival of interest owing to its usefulness for exploring properties
of bootstrap-based statistical methods. For a comprehensive introduction to this subject we refer
the reader to Hall~\cite{Hall92}. Results on asymptotic expansions in other contexts can be
found in Bose \cite{Bose88}, Mykland \cite{Myk93}, Koul and Surgailis \cite{KS97}, Bertail and 
Cl\'emen\c{c}on \cite{BC04}, Zhang et al.\ \cite{ZMS06}, Fukasawa \cite{Fuka08} and the references therein. 

Section \ref{asy.exp.dist} contains an asymptotic expansion of the distribution of the HY-estimator. 
As a first step for deriving asymptotic expansions for the distribution of the HY-estimator, we 
give in Section \ref{ch.f.} a representation of the cumulants of $\hat{\theta}$ as 
functionals of the sampling times, and obtain asymptotic estimates for them.
This is used to derive a second-order asymptotic expansion of the characteristic function of the estimator 
while the asymptotic normality is also proved as an application of those estimates. 

The application of these results in the setup of Poisson sampling schemes is presented in Section \ref{Sec4}. 
We assume that the Poisson processes generating the sampling times have constant intensities
$np_1$ and $np_2$, where $n$ is a parameter guaranteeing the high-frequency of the observations ($n\to\infty$).  
This setup has the advantage of making it possible to compute all the quantities involved in the asymptotic expansion. 
We show that the residual term in the proposed asymptotic expansion of the distribution of  $\sqrt{n}(\hat\theta_n-\theta)$ 
behaves nearly like $n^{-1}$, as $n$ goes to infinity.

When there are (possibly random) drift terms in the stochastic differential equation
of $\mX_t$, some additional terms appear in the asymptotic expansion.
In order to identify these terms, we derive in Section~\ref{071219-2} a stochastic decomposition
of the HY-estimator and explore the asymptotic behavior of the variables appearing in the second-order terms.
Since the asymptotics we get is non-Gaussian, the classical techniques leading to Edgeworth expansions
can not be used. Instead, our arguments rely on the limit theory for semimartingales. 

The asymptotic expansion  of the distribution of the HY-estimator is carried out in Section~\ref{071219-3} using a perturbation method.
We apply the Malliavin calculus first to ensure the regularity of 
the distribution of the principal part---a quadratic form of Gaussian random variables---and then to extend this property 
to the model under the perturbation. To enhance the legibility, we postpone the most technical proofs to the last three 
sections.

\section{Elementary properties of $\hat\theta$}\label{S2}

As noticed by Mykland \cite{Myk06}, the estimator $\hat\theta$ is the Maximum
Likelihood Estimator (MLE) of $\theta$. Let us present here some computations
that not only show that $\hat\theta$ is the MLE of $\theta$, but also
give some interesting insight concerning the efficiency properties of
the HY-estimator $\hat\theta$. Let us deal with a slightly more general setup. Assume that
$\xxi\in\bbR^N$ is a random vector having centered Gaussian
distribution with unknown covariance matrix $\Sigma$. The entries of
the matrix $\Sigma$ are $\sigma_{\ell,\ell'}=
E[\xi_\ell\xi_{\ell'}]$ for $\ell,\ell'=1,\ldots,N$. We want to
estimate a linear combination
\begin{equation*}
\theta=\sum_{\ell,\ell'=1}^N a_{\ell,\ell'} \sigma_{\ell,\ell'},
\end{equation*}
where $a_{\ell,\ell'}\in\bbR$, $\ell,\ell'=1,\ldots,N$ are some
known numbers verifying $a_{\ell,\ell'}=a_{\ell',\ell}$.

In order to use results on the exponential family, it is convenient to consider
the parametrization by the entries of the inverse, denoted by $V=\Sigma^{-1}$, 
of the covariance matrix $\Sigma$. Set $p=(N^2+N)/2$ and write
\begin{equation*}
V=\begin{pmatrix}
v_1 &v_2 & \ldots & v_N\\
v_2 &v_{N+1}&\ldots & v_{2N-1}\\
\vdots&\vdots&\ddots&\vdots\\
v_N &v_{2N-1}&\ldots&v_p
\end{pmatrix}.
\end{equation*}
The log-likelihood function can now be written as follows:
\begin{equation}\label{logl}
\ell(V)=\frac12\log|V|-\frac12\sum_{k=1}^p v_k\sfT_k(\xxi),
\end{equation}
where $|V|$ denotes the determinant of the matrix $V$ and
$\sfT\,(\xxi)=(\sfT_1(\xxi),\sfT_2(\xxi),\ldots)$ is defined by
\begin{equation*}
\sfT_1(\xxi)=\xi_1^2,\quad \sfT_2(\xxi)=2\xi_1\xi_2,\quad \sfT_3(\xxi)=2\xi_1\xi_3,
\quad\ldots,\quad \sfT_p(\xxi)=\xi_N^2.
\end{equation*}
It follows from (\ref{logl}) that the distribution $P_V$ of the Gaussian
vector $\xxi\sim \mathcal N_N(0,V^{-1})$ belongs to the (simple) exponential
family. This implies that the statistic $\sfT\,(\xxi)$ is the MLE of the
parameter
$
\tau=E[\sfT\,(\xxi)]=(\sigma_{11},2\sigma_{12},\ldots,\sigma_{NN})^\T$. 
Hence, the MLE of $\theta=\sum_{\ell,\ell'}a_{\ell,\ell'}\sigma_{\ell,\ell'}$
is $\hat\theta=\sum_{\ell,\ell'}a_{\ell,\ell'} \xi_\ell\xi_{\ell'}$.
It is easily seen that this estimator is unbiased.
Furthermore, since $\sfT\,(\xxi)$ is a complete sufficient statistic,
the MLE $\hat\theta=\sum_{\ell,\ell'}
a_{\ell,\ell'} \xi_\ell\xi_{\ell'}$ is the best unbiased estimator
of $\theta$ in the sense that any other unbiased estimator will have
a variance at least as large as that of $\hat\theta$.

We can now return to our model. The vector
\begin{equation*}\xxi=(\Delta_1 X_1,\ldots,\Delta_{N_1}X_1,\Delta_1 X_2,\ldots,
\Delta_{N_2}X_2)^\T
\end{equation*}
is drawn from an $N=N_1+N_2$ dimensional centered
Gaussian distribution. In addition, the parameter $\theta=Cov(X_{1,T},X_{2,T})$
can be represented in the form $\sum_{\ell,\ell'}a_{\ell,\ell'}
\sigma_{\ell,\ell'}$ with
\begin{equation*}
a_{\ell,\ell'}=\frac12\,\1(\ell\leq N_1,\ell'>N_1, I^\ell\cap
J^{\ell'-N_1}\not=\emptyset)
\end{equation*}
for every $\ell\leq \ell'$ and $a_{\ell,\ell'}=a_{\ell',\ell}$ for $\ell>\ell'$.
Therefore, the arguments presented above yield the following result.

\begin{proposition} The estimator
$\hat\theta$ defined by (\ref{est}) is the MLE of $\theta$. Moreover,
it is the estimator having the smallest quadratic risk among all
unbiased estimators of $\theta$.
\end{proposition}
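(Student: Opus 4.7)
The plan is to reduce the statement to the general Gaussian exponential-family calculation already laid out in the preceding paragraphs, and then match the resulting MLE and complete sufficient statistic against the expression (\ref{est}). Conditional on the sampling grids $\{S^i\}$ and $\{T^j\}$, and working in the no-drift Gaussian submodel in which the likelihood (\ref{logl}) is meaningful, the increment vector $\xxi = (\Delta_1 X_1,\ldots,\Delta_{N_1} X_1,\Delta_1 X_2,\ldots,\Delta_{N_2} X_2)^\T$ is a centered Gaussian vector in $\bbR^N$ with $N = N_1+N_2$, whose covariance entries are explicit integrals of $\sigma_1^2$, $\sigma_2^2$ and $\rho\sigma_1\sigma_2$ over $I^i$, $J^j$ and their intersections. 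This places us exactly in the framework of the earlier discussion, with natural parameter $V = \Sigma^{-1}$ and complete sufficient statistic $\sfT(\xxi)$.

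Next I would verify the two matching identities produced by the coefficients $a_{\ell,\ell'}$ prescribed in the text. Only the crossed block $\ell\le N_1<\ell'$ contributes, and because $\{I^i\}_i$ and $\{J^j\}_j$ both partition $[0,T]$, the indicator $\1(I^i\cap J^j\ne\emptyset)$ is redundant inside $\int_{I^i\cap J^j}\rho_t\sigma_{1,t}\sigma_{2,t}\,dt$. A direct rearrangement then gives $\sum_{\ell,\ell'} a_{\ell,\ell'}\sigma_{\ell,\ell'} = \sum_{i,j}\int_{I^i\cap J^j}\rho_t\sigma_{1,t}\sigma_{2,t}\,dt = \int_0^T \rho_t\sigma_{1,t}\sigma_{2,t}\,dt = \theta$, and exactly the same bookkeeping on the statistic side yields $\sum_{\ell,\ell'} a_{\ell,\ell'}\xi_\ell\xi_{\ell'} = \hat\theta$.

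To conclude, I would invoke the standard facts recalled in the preamble: in the full-rank Gaussian exponential family (\ref{logl}), the sufficient statistic $\sfT(\xxi)$ is itself the MLE of its own expectation $\tau$, so by the invariance of maximum likelihood estimation the MLE of any linear functional $\theta = \langle a,\tau\rangle$ is $\hat\theta = \langle a,\sfT(\xxi)\rangle$. Since $\sfT(\xxi)$ is unbiased for $\tau$, $\hat\theta$ is unbiased for $\theta$. The second claim, i.e.\ smallest quadratic risk among unbiased estimators, follows from Lehmann--Scheff\'e applied to the complete sufficient statistic $\sfT(\xxi)$: every unbiased estimator may be Rao--Blackwellised to a function of $\sfT(\xxi)$, and completeness forces this function to coincide with $\hat\theta$.

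The only delicate point worth flagging, rather than a genuine obstacle, is interpretative: the MLE statement implicitly treats the sampling times as ancillary and is formulated in the no-drift submodel in which the Gaussian likelihood (\ref{logl}) applies, so the proposition should be read conditionally on $\{S^i,T^j\}$ and under $\mbeta\equiv 0$. Once this is made explicit, no further computation is needed.
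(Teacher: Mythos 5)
Your proposal follows the paper's own argument essentially verbatim: identify the increment vector as a centered Gaussian vector, recognize the model as a full-rank exponential family so that $\sfT(\xxi)$ is both the MLE of $\tau$ and a complete sufficient statistic, match the coefficients $a_{\ell,\ell'}$ to write $\theta=\sum a_{\ell,\ell'}\sigma_{\ell,\ell'}$ and $\hat\theta=\sum a_{\ell,\ell'}\xi_\ell\xi_{\ell'}$, and conclude by invariance of the MLE and Lehmann--Scheff\'e. The proof is correct and takes the same route as the paper; your remark that the statement should be read conditionally on the sampling times and under $\mbeta\equiv 0$ is a useful clarification of what the paper leaves implicit.
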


This proposition advocates for using the HY-estimator in the case where $\mbeta\equiv 0$. 
If the latter condition is not satisfied, $\hat\theta$ is not necessarily unbiased, but under 
very mild assumptions it is consistent \cite{Hay-Yos03} and asymptotically normal \cite{Hay-Yos04} 
as the maximum lag of the sampling times tends to $0$. This explains the popularity of the
HY-estimator motivating our interest in its second-order asymptotic expansion. At a heuristical level,
the construction of the HY-estimator can be derived from the decomposition $\theta=\sum_{i,j} \1(I^i\cap J^j\not=\emptyset)\int_{I^i\cap J^j} 
\sigma_{1,t}\sigma_{2,t}\rho_t\,dt$. Indeed, each term of that decomposition is nearly equal  to the covariance 
of the increments $\Delta_i X_1$ and $\Delta_j X_2$, since the martingale part of a small increment of a 
semi-martingale dominates the increment of the bounded-variation part. Hence, if $I^i$ and $J^j$ are small, 
it is reasonable to estimate $\int_{I^i\cap J^j} \sigma_{1,t}\sigma_{2,t}\rho_t\,dt$ by the product
$\Delta_i X_1\cdot\Delta_j X_2$ and, therefore, to estimate $\theta$ by the HY-estimator $\hat\theta$.

\section{Asymptotic expansion of the distribution in Gaussian setup}\label{asy.exp.dist}

\subsection{Notation and main results}
In this section, we will derive the second-order asymptotic expansion
of the distribution of $b_n^{-1/2}(\hat\theta_n-\theta)$, where $b_n$ is a suitably chosen normalization
factor, for the model (\ref{071219-1}) without drifts. We will treat a model with drifts in Section~\ref{071219-2}, 
where we will resort to the Malliavin calculus for dealing with general nonlinear Wiener functionals.


Given positive numbers $M$ and $\gamma$,
let $\cale(M,\gamma)$ denote the set of measurable functions
$f:\bbR\iku\bbR$ satisfying
$|f(x)|\leq M(1+|x|^\gamma)$ for all $x\in\bbR$.
For positive numbers ${\sf C}$, $\eta$, $r_0$ and $\mfc^*$ we set
\beas
\cale^0=\cale^0({\sf C},\eta,r_0,\mfc^*)
=\Big\{f:\>
\int_\bbR\bar{\omega}_f(z,r)\phi(z;\mfc^*)dz\leq{\sf C}r^{\eta},\ \forall r\le r_0\Big\},
\eeas
where
\beas
\bar{\omega}_f(z,r)=\sup_{x:|x|\le r}|f(z+x)-f(z)|
\eeas
and $\phi(z;\Sigma)$ is the density of the centered normal distribution with variance $\Sigma$.
Note that this class is large enough to contain most functions that are encountered
in practice. In particular, all functions satisfying the generalized H\"older condition
$|f(z+x)-f(z)|\le F(z)|x|^\eta$ with some function $F$ such that $\int F(z)\phi(z;\mfc^*)\,dz\le \sf C$
belong to $\cale^0({\sf C},\eta,\infty,\mfc^*)$. It is also easy to check that the set of all
indicator functions of intervals of $\bbR$ is included in $\cale^0(\sqrt{2\pi} \mfc^*,1,\infty,\mfc^*)$ for any $\mfc^*>0$.

Our aim is now to get uniformly in $f\in\cale^*$ an asymptotic expansion for the sequence 
$\Ex[f(b_n^{-1/2}(\hat\theta_n-\theta))]$ with $\cale^*=\cale(M,\gamma)\cap\cale^0({\sf C},\eta,r_0,\mfc^*)$. To this end,
define $h_r(z;\Sigma)$ as the $r$-th Hermite polynomial given by
\beas
h_r(z;\Sigma)=(-1)^r\phi(z;\Sigma)^{-1}
\partial_z^r\phi(z;\Sigma),\qquad \forall z\in\bbR.
\eeas
In particular, $h_2(z;\Sigma)=(z^2-\Sigma)/\Sigma^2$ and 
$h_3(z;\Sigma)=(z^3-3\Sigma z)/\Sigma^3$. Along with the Hermite polynomials, 
it is customary to express the second-order asymptotic expansion of a distribution
in terms of the first-order and the second-order cumulants. To define this quantities
in the present framework, let us denote, for any Borel set $S\subset\bbR$, 
\begin{align}\label{vS}
v(S)=\int_S \rho_t\sigma_{1,t}\sigma_{2,t}\,dt,\quad
v_1(S)=\int_S \sigma_{1,t}^2\,dt,\quad
v_2(S)=\int_S \sigma_{2,t}^2\,dt,
\end{align}
and introduce
\begin{align}
\mu_{2}&= \frac12\Big\{\sum_{I\!,J}v_1(I)v_2(J)\KIJ +
\sum_{I\in\Pi^1} v(I)^2 +\sum_{J\in\Pi^2}v(J)^2-
\sum_{I\!,J} v(I\cap J)^2\Big\},\label{mu221}\\
\mu_{3}&=\frac14\Big\{\sum_{I\in\Pi^1} v(I)^3+\sum_{J\in\Pi^2}v(J)^3+2\sum_{I\!,J}v(I\cap J)^3
+3\sum_{I\!,J} v_1(I)v_2(J)v(I\cup J)\KIJ\nonumber\\ &\quad\quad
-3\sum_{I\!,J} [v(I\cap J)^2(v(I)+v(J))-v(I\cap J)v(I)v(J)]\Big\},\label{mu321}
\end{align}
where $\KIJ =\1(I\cap J\not=\emptyset)$ and
$\sum_{I\!,J}=\sum_{I\in\Pi^1}\sum_{J\in\Pi^2}$. 
Since we are dealing with the asymptotics of high frequency data, we will assume that
all the intervals $I^i=I_n^i$ and $J^j=J_n^j$ depend on some parameter $n$---representing the frequency 
of the sampling---that is large. To make the dependence on $n$ explicit, we will write $\mu_{2,n}$ and $\mu_{3,n}$ 
instead of $\mu_2$ and $\mu_3$. Furthermore, as the time interval $[0,T]$ is fixed, the maximal sampling step 
$r_n=[(\max_{i} |I_{n}^i|)\vee(\max_j |J_{n}^j|)]$ is assumed to tend to zero as $n\to\infty$. 
Using this notation, we define 
\begin{align}\label{bar_lambda}
\bar{\lambda}_{2,n}= 2 \>b_n^{-1}\mu_{2,n},\qquad\text{and}\qquad\bar{\lambda}_{3,n}= 8 \>b_n^{-2}\mu_{3,n},
\end{align}
for some deterministic sequence $b_n$, tending to zero as $n\to\infty$. To some extent, one can think of $b_n$ as the rate  
of convergence of $\mu_{2,n}$ to zero. This point will become clearer in Section~\ref{Sec4}, where the concrete
example of the Poisson sampling scheme is analyzed.

We introduce a $\sigma[\Pi]$-dependent random signed-measure $\Psi^\Pi_n$ on $\bbR$
by the density
\beas
p_{3,n}(z)&=&
\phi(z;\bar{\lambda}_{2,n})\Big[1
+\frac{b_n^{1/2}}{6}\bar{\lambda}_{3,n} \> h_3(z;\bar{\lambda}_{2,n})\Big].
\eeas
It is not hard to check that the Fourier transform of $\Psi^\Pi_n$ is given by
\beas
\hat{\Psi}^\Pi_n(u)=
e^{-\half \bar{\lambda}_{2,n}u^2} \Big[ 1
+\frac{b_n^{1/2}}{6}\bar{\lambda}_{3,n}({\ii}u)^3 \Big].
\eeas
In the case where no assumption on the convergence of $\mu_{2,n}$ is made, the measure $\Psi_n^\Pi$
will serve as the second-order approximation to the distribution of $\calx_n=b_n^{-1/2}(\hat\theta_n-\theta)$. However,
for many sampling schemes one can prove the convergence of $\bar\lambda_{2,n}$ to some constant $\mfc$, 
implying that the estimator $\hat\theta_n$ is asymptotically normal with asymptotic variance $\mfc$. 
It is therefore natural to address the issue of approximating the distribution of $\calx_n$ by a measure similar 
to $\Psi_n^\Pi$ but based on the Gaussian density with variance $\mfc$. To this end, we define 
the signed measure $\tilde{\Psi}^\Pi_n$ on $\bbR$ by the density
\beas
\tilde{p}_{3,n}(z)=
\phi(z;\mfc)\Bigl[1
+\half (\bar{\lambda}_{2,n}-\mfc)h_2(z;\mfc)
+\frac{b_n^{1/2}}{6}\bar{\lambda}_{3,n} \> h_3(z;\mfc)\Bigr].
\eeas

The following result, the proof of which is deferred to Section~\ref{App0}, asserts that
$p_{3,n}$ and $\tilde p_{3,n}$ are good approximations to the density of $(\hat\theta_n-\theta)/\sqrt{b_n}$. 

\begin{theorem}\label{191214-1}
Let $M,\gamma,\eta,{\sf C},r_0,\mfc^*>0$ be the parameters describing the set of functions of interest. For 
$a\in(\frac{3}{4},1)$ and $\mfc,\mfc_0,\mfc_1\in(0,\mfc^*)$ set
\begin{align*}
\calp_n(\mfc_0,\mfc_1,a)&=\{\> \mfc_0<\bar{\lambda}_{2,n}<\mfc_1,\>r_n\leq b_n^a\>\},\\
A_n(a) &= \{\>(\bar{\lambda}_{2,n}-\mfc)^2\leq b_n^{2a-1},\ r_n\leq b_n^a\>\},
\end{align*}
where $r_n$ is the maximal lag of the sampling times and $\bar\lambda_{2,n}=2b_n^{-1}\mu_{2,n}$. 
Then, there exists a sequence
$\ep_n=\ep_n(M,\gamma,\eta,{\sf C},r_0,a,\mfc_0,\mfc_1)$ such that
$\ep_n=O(b_n^{2a-1})$ and 
the inequalities
\begin{align}
\sup_{f\in\cale(M,\gamma)\cap\cale^0({\sf C},\eta,r_0,\mfc^*)} \l| \>\Ex^\Pi[f(\calx_n)]-\Psi^\Pi_n[f]\>\r| &\leq
\ep_n,\qquad \forall \Pi_n\in\calp_n(\mfc_0,\mfc_1,a),\label{thm1a}\\
\sup_{f\in\cale(M,\gamma)\cap\cale^0({\sf C},\eta,r_0,\mfc^*)} \l| \>\Ex^\Pi[f(\calx_n)]-\tilde\Psi^\Pi_n[f]\>\r| &\leq
\ep_n,\qquad \forall \Pi_n\in A_n(a)\label{thm1b},
\end{align}
hold true, where $\calx_n=b_n^{-1/2}(\hat\theta_n-\theta)$.
\end{theorem}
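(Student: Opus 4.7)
Since $\mbeta\equiv 0$, the estimator admits the exact quadratic-form representation $\hat\theta_n=\xxi^\top A_n\xxi$, where $\xxi=(\Delta_i X_1,\Delta_j X_2)_{i,j}^\top$ is centered Gaussian with covariance $\Sigma_n$ and $A_n$ is the symmetric matrix whose only nonzero entries are $\tfrac12\KIJ$. Its characteristic function is then available in closed form as $\Ex[e^{\ii u(\hat\theta_n-\theta)}]=e^{-\ii u\theta}\,|I-2\ii uA_n\Sigma_n|^{-1/2}$, and the series expansion of the log-determinant yields the cumulant identity $\kappa_k(\hat\theta_n)=2^{k-1}(k-1)!\,\tr((A_n\Sigma_n)^k)$ for every $k\ge 2$. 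I would first expand these traces combinatorially along the partitions $\Pi^1,\Pi^2$ and check that $\mu_{2,n}=\tr((A_n\Sigma_n)^2)$ and $\mu_{3,n}=\tr((A_n\Sigma_n)^3)$ in agreement with (\ref{mu221})--(\ref{mu321}). The key estimate is that each such trace reduces to a sum of $k$-fold products of $v,v_1,v_2$ over overlapping intervals, which together with the factors $\KIJ$ produces a sparsity bound of the form $\tr((A_n\Sigma_n)^k)=O(r_n^{k-2}\mu_{2,n})$; on $\calp_n(\mfc_0,\mfc_1,a)$ this translates into $|\kappa_k(\calx_n)|=O(b_n^{a(k-2)}\bar\lambda_{2,n}^{k/2})=O(b_n^{2a-1})$ for every $k\ge 4$ as soon as $a>3/4$.

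Next, I would convert these cumulant estimates into a pointwise expansion of the characteristic function of $\calx_n$. On the range $|u|\le\delta b_n^{-1/2}$ with $\delta$ small, a third-order Taylor expansion of $\log\Ex[e^{\ii u\calx_n}]$ in the cumulants gives $\Ex[e^{\ii u\calx_n}]=\hat\Psi_n^\Pi(u)+\varepsilon_n(u)$ with $|\varepsilon_n(u)|\le C\,b_n^{2a-1}(1+|u|^K)e^{-\mfc_0 u^2/4}$, which is integrable in $u$. On the complementary range $|u|>\delta b_n^{-1/2}$, I would invoke the exact determinantal formula together with the lower bound $\bar\lambda_{2,n}\ge\mfc_0$ to deduce super-polynomial decay in $u$. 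Extracting this quantitative non-degeneracy of the spectrum of $A_n\Sigma_n$ from the sole information $\bar\lambda_{2,n}\ge\mfc_0$, uniformly over all admissible partitions $\Pi_n$, is the main technical obstacle of the whole scheme.

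For a smooth rapidly-decreasing test function, Fourier inversion combining these two range estimates gives the target bound directly. To cover the full class $\cale(M,\gamma)\cap\cale^0({\sf C},\eta,r_0,\mfc^*)$, which contains indicator functions, I would mollify: pick a smooth compactly supported mollifier $\phi_\delta$ of width $\delta$, apply Fourier inversion to $f*\phi_\delta$, and estimate the two smoothing errors $\Ex[(f-f*\phi_\delta)(\calx_n)]$ and $\Psi_n^\Pi[f-f*\phi_\delta]$ via the defining condition $\int\bar\omega_f(z,\delta)\phi(z;\mfc^*)\,dz\le{\sf C}\delta^\eta$. The bounds $\bar\lambda_{2,n}<\mfc_1<\mfc^*$ let one dominate the densities $\phi(z;\bar\lambda_{2,n})$ and $p_{3,n}(z)$ pointwise by a constant multiple of $\phi(z;\mfc^*)$, so the same modulus-of-continuity bound applies against the approximating measure $\Psi_n^\Pi$; choosing $\delta$ as a small power of $b_n$ then balances the smoothing and Fourier errors at the rate $O(b_n^{2a-1})$.

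Finally, the second inequality (\ref{thm1b}) is deduced from (\ref{thm1a}) by Taylor-expanding the density $\phi(z;\bar\lambda_{2,n})$ in its variance parameter around $\mfc$: the identity $\partial_\Sigma\phi(z;\Sigma)=\tfrac12 h_2(z;\Sigma)\phi(z;\Sigma)$ produces exactly the $h_2$-correction that appears in $\tilde p_{3,n}$, with a quadratic remainder of order $(\bar\lambda_{2,n}-\mfc)^2=O(b_n^{2a-1})$ on $A_n(a)$; the analogous substitution inside the $h_3$-term contributes only at order $b_n^{1/2}|\bar\lambda_{2,n}-\mfc|$, which is dominated by the target rate and therefore absorbed into $\ep_n$.
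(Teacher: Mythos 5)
Your overall route is the same as the paper's: exact cumulant formulae for the Gaussian quadratic form, the sparsity bound $|\mu_{k,n}|\le\max_\ell|\lambda_{\ell}|^{k-2}\mu_{2,n}=O(r_n^{k-2}\mu_{2,n})$, expansion of the characteristic function on the central range, a smoothing/mollification step to pass from smooth test functions to $\cale(M,\gamma)\cap\cale^0$ (the paper uses the Babu--Singh form of Sweeting's lemma, which is your mollification argument made quantitative), and for (\ref{thm1b}) the Taylor expansion of $\phi(z;\bar\lambda_{2,n})$ in the variance parameter around $\mfc$. All of that is sound and matches the paper.

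The genuine gap is exactly the step you yourself flag and then leave unresolved: the tail bound on $\varphi_{\calx_n}^\Pi$ and its derivatives for $|u|$ large. As stated, your plan is actually false: from ``the sole information $\bar\lambda_{2,n}\ge\mfc_0$'' one cannot deduce any useful decay, let alone super-polynomial decay --- if a single eigenvalue of $\Sigma_n^{1/2}A_n\Sigma_n^{1/2}$ carries all of $\mu_{2,n}$, then $|\varphi_{\hat\theta}(u)|=(1+4u^2\lambda_1^2)^{-1/4}$ decays only like $|u|^{-1/2}$ and is not even integrable. The missing ingredient is to combine the lower bound $\mu_{2,n}\ge\mfc_0 b_n/2$ with the upper bound $\|\llambda\|_\infty\le\sqrt{3}\sigma^2 r_n\le\sqrt{3}\sigma^2 b_n^{a}$ (which you already have from your sparsity analysis, via Lemma~\ref{lambdasup}), so that $\|\llambda\|_\infty^2/\mu_{2,n}=O(b_n^{2a-1})\to 0$ and one may take $p\sim\mu_{2,n}/(2\|\llambda\|_\infty^2)\to\infty$ ``effective degrees of freedom''. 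The paper's Lemma~\ref{lemm3} then converts this into the quantitative decay $|\varphi_{\hat\theta-\theta}^{(j)}(u)|\le j!(2N\|\llambda\|_\infty+|\theta|)^j(p/2)^{p/4}(1+\mu_2u^2)^{-p/4}$ by a pigeonhole grouping of the eigenvalues into $p$ blocks each contributing at least $\mu_2/(2p)$ to $\sum_\ell\lambda_\ell^2$, and applying $\prod_\ell(1+4u^2\lambda_\ell^2)\ge\prod_{\text{blocks}}(1+4u^2\sum_{\text{block}}\lambda_\ell^2)$. This gives polynomial decay of arbitrarily high order $L$ (not literally super-polynomial, but enough: the tail integral is then $O(\delta_n^{(L-1)/3})$ with $L$ at your disposal). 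Without this lemma, or an equivalent uniform spectral argument over all admissible partitions, the Fourier inversion in your third paragraph does not close.
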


\begin{remark}
The approximating measure $\Psi_n^\Pi$ provided by Theorem \ref{191214-1} contains the
Gaussian density with variance $\bar{\lambda}_{2,n}$, which depends on $n$.
One can easily deduce from that result that the distribution of
$(b_n\bar{\lambda}_{2,n})^{-1/2}(\hat{\theta}_n-\theta)$ can be approximated
by the measure
$$
\Big[\,1+\frac{\sqrt{b_n}\bar\lambda_{3,n}}{6}\Big(z^3-\frac{3}{\bar{\lambda}_{2,n}}z\Big)\Big]\phi(z;1)\>dz.
$$
\end{remark}

The following result is an immediate consequence of~(\ref{thm1b}) and provides
an unconditional asymptotic expansion for the distribution of $\calx_n=b_n^{-1/2}(\hat\theta_n-\theta)$.

\begin{theorem}\label{ThmAE3} Under the notation of Theorem~\ref{191214-1}, if\/  $\Pb(A_n(a)^c)=o(b_n^{p})$ for every $p>1$, and  $\Ex[\bar\lambda_{2,n}-\mfc]=O(b_n^{2a-1})$,
then
\begin{equation}\label{ae3}
\sup_{f\in\cale(M,\gamma)\cap\cale^0({\sf C},\eta,r_0,\mfc^*)} \l| \>\Ex[f(\calx_n)]-\int_\bbR f(z)\,p_n^*(z)\,dz\>\r|=O(b_n^{2a-1}),
\end{equation}
where $p_n^*(z)=\phi(z;\mfc)\Bigl[1
+\frac{b_n^{1/2}}{6}\Ex[\bar{\lambda}_{3,n}] \> h_3(z;\mfc)\Bigr]$. Moreover, if $\sup_{n\in\bbN}\Ex[\bar{\lambda}_{3,n}]<\infty$, then
relation (\ref{ae3}) holds with $p_n^*$ replaced by
$$
p_n^+(z)=\frac{\max(0,p_n^*(z))}{\int_\bbR \max(0,p_n^*(u))\,du}\ ,
$$
which is a probability density.
\end{theorem}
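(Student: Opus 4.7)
The plan is to condition on $\sigma(\Pi)$, apply the conditional expansion (\ref{thm1b}) pointwise on the $\sigma(\Pi)$-measurable event $A_n(a)$, and then average over $\Pi$. Writing $\calf^*=\cale(M,\gamma)\cap\cale^0({\sf C},\eta,r_0,\mfc^*)$ for brevity, I would start from the decomposition
\begin{align*}
\Ex[f(\calx_n)] - \int_\bbR f\,p_n^*\,dz
&= \Ex\bigl[(\Ex^\Pi[f(\calx_n)]-\tilde{\Psi}_n^\Pi[f])\,\1_{A_n(a)}\bigr] \\
&\quad+ \Ex\bigl[(f(\calx_n)-\tilde{\Psi}_n^\Pi[f])\,\1_{A_n(a)^c}\bigr] \\
&\quad+ \Bigl(\Ex\bigl[\tilde{\Psi}_n^\Pi[f]\bigr] - \int_\bbR f\,p_n^*\,dz\Bigr).
\end{align*}
Theorem~\ref{191214-1} inequality (\ref{thm1b}) bounds the first summand by $\ep_n=O(b_n^{2a-1})$ uniformly over $f\in\calf^*$. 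For the third summand, linearity of expectation combined with Fubini's theorem gives
\[
\Ex\bigl[\tilde{\Psi}_n^\Pi[f]\bigr]-\int_\bbR f\,p_n^*\,dz = \tfrac12\,\Ex[\bar{\lambda}_{2,n}-\mfc]\int_\bbR f(z)\,\phi(z;\mfc)\,h_2(z;\mfc)\,dz,
\]
since the $\phi$ term and the $h_3$ coefficient of $\tilde p_{3,n}$ coincide with those of $p_n^*$ after averaging (using $\Ex[\bar\lambda_{3,n}]$ in place of $\bar\lambda_{3,n}$). The surviving integral is bounded by $C(M,\gamma,\mfc)$ uniformly on $\cale(M,\gamma)$, so the hypothesis $\Ex[\bar{\lambda}_{2,n}-\mfc]=O(b_n^{2a-1})$ delivers the target rate.

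The second summand is the main obstacle. Although $\Pb(A_n(a)^c)=o(b_n^p)$ for every $p>1$ provides super-polynomial decay of the indicator, one must combine it with polynomial $L^q$-bounds on $f(\calx_n)$ and $\tilde{\Psi}_n^\Pi[f]$ through H\"older's inequality. Conditional on $\Pi$, $\hat\theta_n-\theta$ is a quadratic form in a centered Gaussian vector whose covariance is controlled by the bounded deterministic functions $\sigma_i,\rho$ on $[0,T]$; Gaussian hypercontractivity on the second chaos then yields $\Ex^\Pi|\calx_n|^{q\gamma}\leq C_q\,\bar{\lambda}_{2,n}^{q\gamma/2}$, while the explicit form of $\tilde p_{3,n}$ gives $|\tilde{\Psi}_n^\Pi[f]|\leq C(M,\gamma,\mfc)(1+|\bar{\lambda}_{2,n}-\mfc|+b_n^{1/2}|\bar{\lambda}_{3,n}|)$. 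Combined with polynomial moment bounds on $\bar{\lambda}_{2,n}$ and $\bar{\lambda}_{3,n}$, inherited from the structural formulas (\ref{mu221})--(\ref{mu321}) and the boundedness of the model coefficients, taking $q$ large makes this summand $o(b_n^{2a-1})$, which closes the first assertion.

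For the second assertion, note that $\int_\bbR p_n^*\,dz=1$ because $\int\phi\,h_3\,dz=0$, though $p_n^*$ may change sign. Under $\sup_n\Ex[\bar{\lambda}_{3,n}]<\infty$, $p_n^*(z)<0$ forces $|h_3(z;\mfc)|\gtrsim b_n^{-1/2}$, hence $|z|\gtrsim b_n^{-1/6}$, a regime in which $\phi(z;\mfc)\leq \exp(-c\,b_n^{-1/3})$. Consequently $\int_\bbR(1+|z|^\gamma)\max\bigl(0,-p_n^*(z)\bigr)\,dz$ is super-exponentially small in $b_n$, whence the normalizing constant $\int_\bbR\max(0,p_n^*)\,dz$ differs from $1$ by $o(b_n^p)$ for every $p>0$ and $\bigl|\int f\,(p_n^+-p_n^*)\,dz\bigr|=o(b_n^{2a-1})$ uniformly on $\calf^*$. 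The triangle inequality then transfers (\ref{ae3}) from $p_n^*$ to $p_n^+$.
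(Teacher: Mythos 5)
Your proposal is correct and follows exactly the route the paper intends: the authors state Theorem~\ref{ThmAE3} as an immediate consequence of (\ref{thm1b}), and your three-term decomposition (conditional bound on $A_n(a)$, H\"older plus the super-polynomial bound on $\Pb(A_n(a)^c)$ with polynomial moment control, and the $\Ex[\bar\lambda_{2,n}-\mfc]=O(b_n^{2a-1})$ hypothesis absorbing the $h_2$ correction) is precisely the argument being left implicit. The treatment of $p_n^+$ via the localization of the negative part of $p_n^*$ to $|z|\gtrsim b_n^{-1/6}$ likewise matches the paper's own discussion of the normalizing constant following Theorem~\ref{expPois}.
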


{


\subsection{Gaussian analysis and expansion of the characteristic function}\label{ch.f.}

The goal of this section is to prepare the ground for the proof of Theorem~\ref{191214-1}. 
To this end, we present in Section~\ref{SSec2} general results on the characteristic function
of a random variable that can be written as a quadratic functional of a standard Gaussian vector. 
As usual, this characteristic function involves the cumulants that take a simplified form in
the context of the HY-estimator. Section~\ref{SSec3} is devoted to proving that the second and 
the third cumulants for the HY-estimator can be computed using formulae~(\ref{mu221}) and~(\ref{mu321}).
These results lead to a second-order expansion of the characteristic function of the HY-estimator,
which is rigorously stated and proved in Section~\ref{SSec4}. Finally, the proof of Theorem~\ref{191214-1}
is presented in Section~\ref{SSec5}.

\subsubsection{General Gaussian setup}\label{SSec2}
In order to determine the asymptotic expansion of the distribution of
$\hat\theta$, we start with expanding its characteristic function. It will be useful
for our purposes to consider the more general setup defined via
Gaussian vector $\xxi$ and the matrix
$A=(a_{\ell,\ell'})_{\ell,\ell'=1}^N$, see Section~\ref{S2}.

Recall that
$$
\hat\theta=\xxi^\T A\xxi\qquad \text{and}\qquad
\xxi\sim \mathcal N_N(0,\Sigma).
$$
In other terms, $\hat\theta$ is a quadratic form of a centered Gaussian vector. 
The aim of the present subsection is twofold. Firstly, we compute the cumulants of any
quadratic form $Q$ of a Gaussian vector $\xxi$ as functions of the matrix associated 
to the quadratic form $Q$ and the covariance matrix of $\xxi$. Among other things, 
this computation allows us to give a simple condition implying the weak convergence
of a series of quadratic forms of Gaussian vectors. The second goal of the present
subsection is to show that the tails of the characteristic function of a quadratic
form of a Gaussian vector have at least polynomial decay. To achieve this second goal, 
we establish an explicit upper bound for the characteristic function of interest.
It should be pointed out that most results and conditions are stated in terms of 
the spectral characteristics of the matrix $\Sigma^{1/2}A\Sigma^{1/2}$.

Since $A$ is a symmetric matrix, the $N$-by-$N$ matrix $\Sigma^{1/2}A
\Sigma^{1/2}$ is symmetric and therefore diagonalizable. Let $\Lambda$ and
$U$ be respectively the $N$-by-$N$ diagonal and orthogonal matrices
such that $\Sigma^{1/2}A\Sigma^{1/2}=U^\T\Lambda U$. Let
$\zzeta$ be a Gaussian $\mathcal N_N(0,I_N)$ vector such that
$\xxi=\Sigma^{1/2}\cdot U^\T\zzeta$.
Such a vector exists always and it is unique if $\Sigma$ is invertible.
In this notation, we have
$$
\hat\theta=\zzeta^\T\Lambda\zzeta=\sum_{\ell=1}^N \lambda_\ell \zeta_\ell^2,
$$
where $\lambda_1,\ldots,\lambda_N$ are the eigenvalues of the matrix
$\Sigma^{1/2}A\Sigma^{1/2}$ and $\zeta_1,\ldots,\zeta_N$ are independent
Gaussian random variables. This implies that $\zeta_\ell^2$s are independent
and distributed according to the $\chi^2_1$ distribution. Hence
$\Ex[e^{{\ii}u\zeta_\ell^2}]=(1-2{\ii}u)^{-1/2}$ and
$$
\varphi_{\hat\theta}(u):=\Ex[e^{{\ii}u\hat\theta}]=
\prod_{\ell=1}^N (1-2\ii \lambda_\ell u)^{-1/2}.
$$
By taking the logarithm and using its Taylor series we get
$$
\log\varphi_{\hat\theta}(u) =-\frac12\sum_{\ell=1}^N\log(1-2\ii \lambda_\ell u)
=\frac12\sum_{\ell=1}^N\sum_{k=1}^\infty\frac{(2\ii\lambda_\ell u)^k}{k},
$$
as soon as $|u|< 1/(2\max_{\ell} |\lambda_\ell|)$.
Since all the series in the above formula are absolutely convergent,
we can change the order of summation. This yields
\begin{align}\label{logfi}
\log\varphi_{\hat\theta}(u)
=\sum_{k=1}^\infty\frac{(2\ii u)^k}{2k}\mu_{k},
\qquad |u|< 1/(2\|\llambda\|_\infty),
\end{align}
with $\|\llambda\|_\infty=\max_{\ell} |\lambda_\ell|$ and
$
\mu_k=\sum_{\ell=1}^N \lambda_\ell^k=\tr[(\Sigma^{1/2}A\Sigma^{1/2})^k]
=\tr[(\Sigma\cdot A)^k]$, where the last equality follows from the property 
$\tr(M_1\cdot M_2)= \tr(M_2\cdot M_1)$ provided that both products are well 
defined. Separating the first two terms in the RHS of (\ref{logfi}),
we arrive at
\begin{align}\label{logfi1}
\log\varphi_{\hat\theta}(u)
=\ii\theta u-u^2\mu_2+\sum_{k=3}^\infty\frac{(2\ii u)^k}{2k}\mu_{k},
\qquad |u|< 1/(2\|\llambda\|_\infty).
\end{align}
Let us define $\bar\alpha=\|\llambda\|_\infty/\|\llambda\|_2$. Using simple inequalities, one checks that
$|\mu_{k}|\le {\bar\alpha}^{k-2}\mu_{2}^{k/2}$ for every $k\geq 3$.
Therefore,
\begin{align*}
\bigg|\sum_{k=3}^\infty \frac{(2{\ii}u)^{k}\mu_{k}}
{2k}\bigg|&\le
 2\mu_2|u|^2\sum_{k\ge 0} \frac{(2|u|{\bar\alpha}\sqrt{\mu_2})^{k+1}}{k+1}=-2\mu_2|u|^2\log(1-2|u|{\bar\alpha}\sqrt{\mu_2}),
\end{align*}
for every $u$ satisfying $|u|<(2\bar\alpha\sqrt{\mu_2})^{-1}$. This leads to the inequality
\begin{align}\label{phi2}
|\log\varphi_{\hat\theta-\theta}(v/\sqrt{2\mu_2})+\frac{v^2}{2}|\le -v^2\log(1-\sqrt2|v|{\bar\alpha}),
\end{align}
for every $|v|<(\sqrt2\bar\alpha)^{-1}$. As a first application of our approach,
we obtain a central limit theorem for $\hat{\theta}_n$.

\begin{proposition}\label{WCONV}
Suppose that the matrices $A=A_n$ and $\Sigma=\Sigma_n$ as
well as the number $N=N_n$ depend on $n\in\bbN$. If
$\lambda_{1,n},\ldots,\lambda_{N,n}$, the eigenvalues of
$\Sigma_n^{1/2}A_n\Sigma_{n}^{1/2}$, satisfy
$\lim_{n\to\infty} \|\llambda_{n}\|_\infty^2/\mu_{2,n}=0$,
then
$$
\frac{\hat\theta_n-\theta_n}
{\sqrt{2\mu_{2,n}}}
\xrightarrow[n\to\infty]{D}
\mathcal N(0,1),
$$
where $\hat\theta_n=\xxi^\T A_n\xxi$, $\theta_n=\Ex[\hat\theta_n]
=\tr[\Sigma_n A_n]$, $\mu_{2,n}=\tr[(\Sigma_n A_n)^2]$
and $\xrightarrow{D}$ stands for the convergence in distribution.
\end{proposition}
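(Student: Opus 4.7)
The plan is to exploit the characteristic function estimate (\ref{phi2}) that has just been established, which effectively reduces the Proposition to a one-line application of L\'evy's continuity theorem.

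First I would rewrite the hypothesis in a convenient form. Since $\mu_{2,n}=\tr[(\Sigma_n A_n)^2]=\sum_\ell \lambda_{\ell,n}^2 = \|\llambda_n\|_2^2$, the quantity $\bar\alpha_n=\|\llambda_n\|_\infty/\|\llambda_n\|_2$ appearing in (\ref{phi2}) satisfies $\bar\alpha_n^2=\|\llambda_n\|_\infty^2/\mu_{2,n}$, so the assumption of the Proposition is precisely $\bar\alpha_n\to 0$ as $n\to\infty$.

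Next, I would fix an arbitrary $v\in\bbR$ and, for all $n$ large enough so that $|v|<(\sqrt{2}\bar\alpha_n)^{-1}$, apply (\ref{phi2}) to $\hat\theta_n$, $\Sigma_n$, $A_n$:
\begin{equation*}
\Big|\log\varphi_{\hat\theta_n-\theta_n}\bigl(v/\sqrt{2\mu_{2,n}}\bigr)+\tfrac{v^2}{2}\Big|\leq -v^2\log(1-\sqrt{2}|v|\bar\alpha_n).
\end{equation*}
Since $\bar\alpha_n\to 0$, the right-hand side tends to $0$, so the characteristic function of $(\hat\theta_n-\theta_n)/\sqrt{2\mu_{2,n}}$ converges pointwise to $e^{-v^2/2}$.

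Finally, L\'evy's continuity theorem gives the claimed weak convergence to $\mathcal N(0,1)$. There is really no hard step here: the crux was already done in (\ref{phi2}), where the Taylor expansion of $\log\varphi_{\hat\theta}$ was controlled by a geometric-series tail. The only care I would take is to emphasize that the domain condition $|v|<(\sqrt{2}\bar\alpha_n)^{-1}$ needed to invoke (\ref{phi2}) is automatically satisfied for fixed $v$ and $n$ large enough, precisely because $\bar\alpha_n\to 0$; thus no additional uniformity is required beyond the pointwise convergence that L\'evy's theorem demands.
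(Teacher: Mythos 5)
Your proof is correct and follows exactly the paper's own argument: the paper likewise observes that the hypothesis means $\bar\alpha_n\to 0$, invokes inequality (\ref{phi2}) to get pointwise convergence of the characteristic function of $(\hat\theta_n-\theta_n)/\sqrt{2\mu_{2,n}}$ to $e^{-v^2/2}$, and concludes by L\'evy's continuity theorem. Your version merely spells out the domain condition $|v|<(\sqrt{2}\bar\alpha_n)^{-1}$ more explicitly, which is a welcome clarification but not a different approach.
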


\begin{proof}
Set $\mu_{k,n}=\tr[(\Sigma_n A_n)^k]=\sum_\ell \lambda_{\ell,n}^k$
and  $\eta_n=(\hat\theta_n-\theta_n)/\sqrt{2\mu_{2,n}}$. The inequality
(\ref{phi2}) and the condition $\lim_{n\to\infty}\|\llambda_n\|_\infty^2/\mu_{2,n}=0$ imply that
the characteristic function of $\eta_n$ converges pointwise to the characteristic function of a standard Gaussian
distribution. This completes the proof of the proposition.
\end{proof}

This result states that the distribution of the estimator $\hat\theta_n$ is well approximated by
a Gaussian distribution.  In order to give a more precise sense to this approximation and to obtain more
accurate approximations, we focus our attention on a second-order asymptotic expansion of the distribution
of $\hat\theta_n$. To this end, we prove first that the tails of this distribution are sufficiently
small.

\begin{lemma}\label{lemm3}
If for some $p\in\bbN$ the inequality $\|\llambda\|_\infty^2\le \mu_2/(2p)$ holds, then
for every $j\in\bbN$
$$
\bigg|\frac{d^j}{du^j}\,\Ex[e^{{\ii}u(\hat\theta-\theta)}]\bigg|\le j! (2N\|\llambda\|_\infty+|\theta|)^j (p/2)^{p/4}(1+\mu_2u^2)^{-p/4},\qquad\forall\,u\in\bbR.
$$
\end{lemma}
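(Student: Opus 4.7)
The plan is to exploit the product form of the characteristic function $\varphi(u):=\Ex[e^{iu(\hat\theta-\theta)}]$ after the diagonalization of Section~\ref{SSec2}. Writing $\hat\theta-\theta=\sum_\ell \lambda_\ell(\zeta_\ell^2-1)$ with $\zeta_\ell$ i.i.d.\ standard Gaussian, one gets $\varphi(u)=e^{-iu\theta}\prod_\ell (1-2i\lambda_\ell u)^{-1/2}$ and $|\varphi(u)|=\prod_\ell (1+4\lambda_\ell^2 u^2)^{-1/4}$. I will combine (i) a lower bound for this product in terms of a power of $1+\mu_2 u^2$, (ii) estimates for the derivatives of $h:=\log\varphi$, and (iii) Faà di Bruno's formula.

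For step (i), the hypothesis $\|\llambda\|_\infty^2\le \mu_2/(2p)$ lets me greedily partition $\{1,\dots,N\}$ into $p$ blocks $G_1,\dots,G_p$ each satisfying $\sum_{\ell\in G_k}\lambda_\ell^2\ge \mu_2/(2p)$: I keep adding indices with decreasing $\lambda_\ell^2$ to the block currently being filled until its mass first exceeds $\mu_2/(2p)$, at which point it is at most $\mu_2/p$ since every summand is bounded by $\mu_2/(2p)$; after $p-1$ blocks the residual mass is still at least $\mu_2/p\ge \mu_2/(2p)$, permitting a final block. The inequality $\prod_{\ell\in G_k}(1+4\lambda_\ell^2 u^2)\ge 1+2\mu_2 u^2/p$ then yields $|\varphi(u)|^{-4}\ge (1+2\mu_2 u^2/p)^p$, and the elementary bound $1+2\mu_2 u^2/p\ge (2/p)(1+\mu_2 u^2)$ (valid once $p\ge 2$) converts this into $|\varphi(u)|\le (p/2)^{p/4}(1+\mu_2 u^2)^{-p/4}$.

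For step (ii), I would differentiate $h(u)=-iu\theta-\tfrac12\sum_\ell \log(1-2i\lambda_\ell u)$ explicitly. Using $|1-2i\lambda_\ell u|\ge 1$ on the real line, the resulting expressions are bounded by $|h^{(1)}(u)|\le |\theta|+N\|\llambda\|_\infty$ and $|h^{(k)}(u)|\le (k-1)!\,2^{k-1}N\|\llambda\|_\infty^k$ for $k\ge 2$. Both estimates are subsumed by $|h^{(k)}(u)|\le (k-1)!\,\tilde L^k$, where $\tilde L:=2N\|\llambda\|_\infty+|\theta|$.

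Step (iii) assembles these pieces. Faà di Bruno's formula gives $\varphi^{(j)}/\varphi=B_j(h',\dots,h^{(j)})$, where $B_j$ denotes the $j$-th complete Bell polynomial, whose coefficients are non-negative. The triangle inequality and the generating-function identity
\[
\sum_{j\ge 0} B_j\bigl(\tilde L,\tilde L^2,\dots,(j-1)!\,\tilde L^j\bigr)\frac{t^j}{j!}=\exp\!\Bigl(\sum_{k\ge 1}\frac{(\tilde L t)^k}{k}\Bigr)=\frac{1}{1-\tilde L t}
\]
then force $B_j(\tilde L,\dots,(j-1)!\tilde L^j)=j!\,\tilde L^j$, so $|\varphi^{(j)}(u)/\varphi(u)|\le j!\,\tilde L^j$. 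Multiplying by the bound of step~(i) produces the statement. The main obstacle is step~(i): the greedy partition and the subsequent matching of constants to land on exactly $(p/2)^{p/4}$ require careful bookkeeping, whereas the derivative bounds in (ii) and the Bell-polynomial computation in (iii) are routine once the product structure of $\varphi$ is in hand.
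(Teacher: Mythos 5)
Your proof is correct. The decisive step (i) — partitioning the index set into $p$ groups each carrying $\ell^2$-mass at least $\mu_2/(2p)$, then using $\prod(1+a_\ell)\ge 1+\sum a_\ell$ and $1+2\mu_2u^2/p\ge (2/p)(1+\mu_2u^2)$ — is exactly the paper's argument, only phrased as a greedy block construction rather than via the running-sum thresholds $\ell_i$; your explicit remark that $p\ge 2$ is needed is fair (the bound as stated indeed fails at $u=0$ when $p=1$, in the paper's version as well). Where you genuinely diverge is in the derivative bookkeeping: the paper applies the generalized Leibniz rule to the product $\prod_\ell(1-2{\ii}\lambda_\ell u)^{-1/2}$, bounding each factor's $j_\ell$-th derivative while retaining one $(1+4u^2\lambda_\ell^2)^{-1/4}$ per index, and then absorbs $\theta$ through a separate binomial convolution with $e^{-{\ii}u\theta}$; you instead differentiate $\log\varphi$, bound $|h^{(k)}|\le (k-1)!\,\tilde L^k$ uniformly on the real line, and invoke Fa\`a di Bruno with the generating-function identity $\exp(-\log(1-\tilde Lt))=(1-\tilde Lt)^{-1}$ to get $|\varphi^{(j)}/\varphi|\le j!\,\tilde L^j$ in one stroke. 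Your route is tidier — it handles the drift term $\theta$ and the multinomial combinatorics simultaneously and automatically keeps the full factor $|\varphi(u)|$, whereas the paper's route is more elementary (no Bell polynomials) but requires checking that the multinomial sum collapses to $j!\,(2N\|\llambda\|_\infty)^j$. Both land on the same constant $(2N\|\llambda\|_\infty+|\theta|)^j$.
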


\begin{proof}
Thanks to the fact that $\zeta_\ell^2$ is distributed according to the $\chi_1^2$ distribution, one easily checks that
$\big|\varphi_{\hat\theta}(u)\big|=\big|\prod_{\ell=1}^N(1-2{\ii}u \lambda_{\ell})^{-1/2}\big|
=\prod_{\ell=1}^{N} (1+4u^2\lambda_{\ell}^2)^{-1/4}$. 
In view of the assumptions of the lemma, for every $i=1,\ldots,p$, there exists an integer $\ell_i$ verifying
$\mu_2^{-1}\sum_{\ell=1}^{\ell_i} \lambda_{\ell}^2<i/p$ and $\mu_2^{-1}\sum_{\ell=1}^{\ell_{i}+1} \lambda_{\ell}^2\ge i/p$.
For this sequence $\ell_i$, we get $\mu_2^{-1}\sum_{\ell=\ell_i+1}^{\ell_{i+1}} \lambda_{\ell}^2\ge (i+1)/p-1/(2p)-i/p=1/(2p)$
and therefore
\begin{align}\label{j=0}
\prod_{\ell=1}^{N} (1+4u^2\lambda_{\ell}^2)^{-1/4}&\le
\prod_{i=1}^{p} \Big(1+4u^2\sum_{\ell=\ell_i+1}^{\ell_{i+1}}\lambda_{\ell}^2\Big)^{-1/4}\le (p/2)^{p/4}(1+\mu_2u^2)^{-p/4}.
\end{align}
This gives the desired estimate in the case where $j=0$.

For $j> 0$, the explicit form of $\varphi_{\hat\theta}$ allows one to check that
$$
\varphi_{\hat\theta}^{(j)}(u)=\sum_{j_1+\ldots+j_N =j} \frac{j!}{j_1!\ldots j_N!} \prod_{\ell=1}^N\frac{d^{j_\ell}}{du^{j_\ell}}(1-2{\ii}u\lambda_{\ell})^{-1/2}.
$$
Simple computations yield
\begin{align*}
\Big|\frac{d^{j_\ell}}{du^{j_\ell}}(1-2{\ii}u\lambda_{\ell})^{-1/2}\Big|&\le
\Big|\frac{j_\ell!\;(2\ii\lambda_{\ell})^{j_\ell}}{(1-2{\ii}u\lambda_{\ell})^{j_\ell+1/2}}\Big|
\le \frac{j_\ell!\|2\lambda\|_\infty^{j_\ell}}{(1+4u^2\lambda_{\ell}^2)^{1/4}}.
\end{align*}
Therefore,
$$
\Big|\frac{d^j}{du^j}\,\varphi_{\hat\theta}(u)\Big|\le j!  (2N\|\llambda\|_\infty)^{j}\prod_{\ell=1}^N (1+4u^2\lambda_{\ell}^2)^{-1/4}
$$
and the desired inequality for $\theta=0$ follows from (\ref{j=0}). For $\theta$ different from zero, it suffices to use
the relation $|\varphi_{\hat\theta-\theta}^{(j)}(u)|\le\sum_{k=0}^j C_j^k |\ii \theta|^k|\varphi_{\hat\theta}^{(j-k)}(u)|$
and the obtained estimate for $|\varphi_{\hat\theta}^{(j-k)}(u)|$.
\end{proof}

\begin{remark}
We will use the result of Lemma~\ref{lemm3} in the asymptotic setup described in Proposition~\ref{WCONV}, essentially for bounding
the tails of the derivatives of the characteristic function $\varphi_{\hat\theta-\theta}(u)$ of $\hat\theta-\theta$, when the absolute value of $u$ is larger than $N^{q_0}/\sqrt{\mu_2}$ for some $q_0>0$. As we see later, in the asymptotic setup, the
ratio $\|\llambda\|_\infty^2/\mu_2$ tends to zero under mild assumptions on the sampling schemes. This will allow us to take the parameter $p$ of Lemma~\ref{lemm3} large enough to guarantee suitable 
decay properties for the tails of the derivatives of $\varphi_{\hat\theta-\theta}$.
\end{remark}

\subsubsection{Computation of $\mu_k$ in our setup}\label{SSec3}

We showed in the previous subsection that the asymptotic expansion of
the characteristic function of $\hat\theta$ involves the
traces of integer powers of the matrix $\Sigma\cdot A$. In our setup, both
matrices $A$ and $\Sigma$ have special forms. In particular, they contain
only a small number of nonzero entries and, therefore, the expression of $\mu_k$
takes a simplified form.

Prior to presenting the formula for $\mu_k$, we need a definition.
Let $k>0$ be an integer.

\begin{definition}
We call chain of length $k$, any vector
$(\boldsymbol{i},\boldsymbol{j})\in \{1,\ldots,N_1\}^k
\times\{1,\ldots,N_2\}^k$ such that
$I^{i_p}\cap J^{j_p}\not=\emptyset$ and
$J^{j_p}\cap I^{i_{p+1}}\not=\emptyset$ for all $p\in\{1,\ldots,k\}$
with the convention $i_{k+1}=i_1$.
The set of all chains of length $k$ will be denoted by  $\mathscr C_k$.
\end{definition}

In the definition of $\mathscr C_k$, $i_p$ (resp.\ $j_p$)
stands for the $p$th coordinate of $\boldsymbol{i}$ (resp.\ $\boldsymbol{j}$).

\begin{proposition}\label{propmu}
The coefficients $\mu_2$ and $\mu_3$ can be computed by the formulae
\begin{align*}
\mu_2&=\frac12\sum_{(\boldsymbol i,\boldsymbol j)\in\mathscr C_{2}}
\prod_{p=1}^2 v(I^{i_p}\cap J^{j_p})+
\frac12\sum_{(i,j)\in\mathscr C_{1}}
v_1(I^{i})v_2(J^{j}),\\
\mu_3&=\frac14\sum_{(\boldsymbol i,\boldsymbol j)\in\mathscr C_{3}}
\prod_{p=1}^3 v(I^{i_p}\cap J^{j_p})+
\frac34\sum_{(\boldsymbol i,\boldsymbol j)\in\mathscr C_{2}}
v_1(I^{i_1})v_2(J^{j_1})v(I^{i_2}\cap J^{j_2}),
\end{align*}
where $v,v_1$ and $v_2$ are defined by (\ref{vS}).
\end{proposition}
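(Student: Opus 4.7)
The plan is to use the representation $\mu_k=\tr[(\Sigma A)^k]$ derived just before \eqref{logfi1}, together with the highly structured block form of both $\Sigma$ and $A$ in the HY setup. Ordering $\xxi=(\Delta_1X_1,\ldots,\Delta_{N_1}X_1,\Delta_1X_2,\ldots,\Delta_{N_2}X_2)^\T$, the two matrices split as
\[
\Sigma=\begin{pmatrix} D_1 & C \\ C^\T & D_2 \end{pmatrix},\qquad
A=\tfrac{1}{2}\begin{pmatrix} 0 & K \\ K^\T & 0 \end{pmatrix},
\]
with $D_1=\diag(v_1(I^i))$, $D_2=\diag(v_2(J^j))$, $C_{ij}=v(I^i\cap J^j)$ and $K_{ij}=\KIJ$. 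The diagonal form of $D_1,D_2$ expresses the orthogonality of Brownian increments on disjoint intervals, and the off-diagonal shape of $A$ follows directly from its definition $a_{\ell,\ell'}=\tfrac12\1(\ell\le N_1<\ell',\,I^\ell\cap J^{\ell'-N_1}\neq\emptyset)$. A direct block product gives $\Sigma A=\tfrac{1}{2}\begin{pmatrix}CK^\T & D_1K\\ D_2K^\T & C^\T K\end{pmatrix}$.

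For $\mu_2$, reading off the diagonal blocks of $(\Sigma A)^2$ and using cyclic invariance of the trace, together with the identities $\tr[(C^\T K)^2]=\tr[(CK^\T)^2]$ and $\tr[D_2K^\T D_1K]=\tr[D_1KD_2K^\T]$, I obtain $\mu_2=\tfrac12\tr[(CK^\T)^2]+\tfrac12\tr[D_1KD_2K^\T]$. Entry-by-entry expansion of these traces produces a sum over quadruples $(i_1,i_2,j_1,j_2)$ whose indicator constraints are exactly those defining $\mathscr C_2$, and a sum over pairs $(i,j)\in\mathscr C_1$, yielding the claimed formula. A crucial but elementary observation recurring throughout is that a factor $v(I^{i_p}\cap J^{j_p})$ automatically enforces $I^{i_p}\cap J^{j_p}\neq\emptyset$, so the remaining indicator constraints demanded by the chain conditions may be appended without changing any of the sums.

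For $\mu_3$ the scheme is identical but the combinatorics become heavier: writing $(\Sigma A)^3=\tfrac18 M^3$ with $M$ the unnormalised $2\times 2$ block matrix, the two diagonal blocks of $M^3$ together produce eight trace terms, and cyclic permutation of the trace collapses them into only three distinct ones: $\tr[(CK^\T)^3]$ with multiplicity $2$, $\tr[CK^\T D_1KD_2K^\T]$ with multiplicity $3$, and $\tr[D_1KC^\T KD_2K^\T]$ with multiplicity $3$. A relabelling of dummy indices shows that the last two traces coincide, which after division by $8$ gives $\mu_3=\tfrac14\tr[(CK^\T)^3]+\tfrac34\tr[CK^\T D_1KD_2K^\T]$; entry-by-entry expansion then identifies the first trace with $\sum_{\mathscr C_3}\prod_{p=1}^3 v(I^{i_p}\cap J^{j_p})$ and the second with $\sum_{\mathscr C_2}v_1(I^{i_1})v_2(J^{j_1})v(I^{i_2}\cap J^{j_2})$. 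The main obstacle is precisely this $\mu_3$ bookkeeping: enumerating the eight block products correctly, applying cyclic permutations so that equal traces are matched, and then relabelling summation indices so that each surviving trace lines up with a chain-indexed sum in the statement.
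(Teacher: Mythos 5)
Your proof is correct and is essentially the paper's argument in different clothing: both start from $\mu_k=\tr[(\Sigma A)^k]$ and exploit the sparsity of $A$ and $\Sigma$, your block-matrix bookkeeping (the traces $\tr[P^3]$, $\tr[PQR]$, $\tr[QSR]$, their cyclic identifications and the transpose identity $\tr[QSR]=\tr[PQR]$) reproducing exactly the paper's case analysis on the indices $\ell_1,\dots,\ell_6$ (cases A.1--A.4 together with the symmetry $\mu_{3,B}=\mu_{3,A}$). The block formulation is arguably the cleaner packaging, since the multiplicities $2$ and $6$ fall out of the matrix algebra rather than from an enumeration of sub-cases, but the underlying computation and the final identification of the entrywise constraints with the chain conditions defining $\mathscr C_2$ and $\mathscr C_3$ are the same.
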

\begin{proof}We give only the proof of the second formula. The proof of the
first  formula is analogous but simpler, therefore it is omitted.
Since $\mu_3=\tr[(\Sigma\cdot A)^3]$, we have
\begin{align}\label{mu3}
\mu_3=\sum_{\ell_1,\ldots,\ell_6=1}^N \sigma_{\ell_1\ell_2}
a_{\ell_2\ell_3}\sigma_{\ell_3\ell_4}a_{\ell_4\ell_5}
\sigma_{\ell_5\ell_6}a_{\ell_6\ell_1}.
\end{align}
In our setup, the entries of the matrix $A$ are
\begin{align}
a_{\ell,\ell'}&=
\frac12\cdot\1(\ell\le N_1,\ell'>N_1, I^\ell\cap J^{\ell'-N_1}\not=\emptyset)
+\frac12\cdot\1(\ell>N_1,\ell'\le N_1, I^{\ell'}\cap
J^{\ell-N_1}\not=\emptyset),\label{a}
\end{align}
and those of $\Sigma$ are
\begin{align}
\sigma_{\ell,\ell'}&=
\begin{cases}
v(I^\ell\cap J^{\ell'-N_1}), &\text{if}\ \ell\le N_1,\ell'>N_1,\\
v(I^{\ell'}\cap J^{\ell-N_1}), &\text{if}\ \ell'\le N_1,\ell>N_1,\\
v_1(I^\ell),&\text{if}\ \ell=\ell'\le N_1,\\
v_2(J^{\ell-N_1}),&\text{if}\ \ell=\ell'> N_1,\\
0,&\text{otherwise}.\label{sigma}
\end{cases}
\end{align}
To compute the sum in the right hand side of (\ref{mu3}), we consider
different cases separately.\\
\underline{Case A: $\ell_1\le N_1$} Our aim now is to compute
$$
\mu_{3,A}=
\sum_{\ell_1\le N_1}\sum_{\ell_2,\ldots,\ell_6=1}^N \sigma_{\ell_1\ell_2}
a_{\ell_2\ell_3}\sigma_{\ell_3\ell_4}a_{\ell_4\ell_5}
\sigma_{\ell_5\ell_6}a_{\ell_6\ell_1}.
$$
This can be done by considering the following four subcases:
\medskip

\hglue20pt Case A.1 $\ell_1\not=\ell_2$ and $\ell_3\not=\ell_4$,
\hglue20pt Case A.2 $\ell_1=\ell_2$ and $\ell_3=\ell_4$,

\hglue20pt Case A.3 $\ell_1\not=\ell_2$ and $\ell_3=\ell_4$,
\hglue20pt Case A.4 $\ell_1=\ell_2$ and $\ell_3\not=\ell_4$.

\medskip

In the case A.1, in order that the corresponding term in (\ref{mu3})
be nonzero, the indices $\ell_i,i\le 6$, should satisfy
$\ell_1\le N_1$, $\ell_2>N_1$, $\ell_3\le N_1$, $\ell_4>N_1$, $\ell_5\le
N_1$ and $\ell_6>N_1$. Moreover, if we set $\bds i=(\ell_1,\ell_3,\ell_5)$
and $\bds j=(\ell_2,\ell_4,\ell_6)$, then $(\bds i,\bds j)$ should belong
to $\mathscr C_3$. Therefore,
$\sigma_{i_pj_p}=v(I^{i_p}\cap J^{j_p})$ for $p=1,2,3$ and
\begin{align}\label{T1}
\sigma_{\ell_1\ell_2}
a_{\ell_2\ell_3}\sigma_{\ell_3\ell_4}a_{\ell_4\ell_5}
\sigma_{\ell_5\ell_6}a_{\ell_6\ell_1}=
\frac18\1((\bds i,\bds j)\in\mathscr C_3)\prod_{p=1}^3 v(I^{i_p}\cap J^{j_p}).
\end{align}

In the case A.2, in order to get nonzero term in (\ref{mu3}), the
indices $\ell_i,i\le 6$, should satisfy
$\ell_1=\ell_2\le N_1$, $\ell_3=\ell_4>N_1$, $\ell_5\le N_1$
and $\ell_6>N_1$. Moreover, if we set $\bds i=(\ell_1,\ell_5)$
and $\bds j=(\ell_3,\ell_6)$, then $(\bds i,\bds j)$ should belong
to $\mathscr C_2$. Therefore,
\begin{align}\label{T2}
\sigma_{\ell_1\ell_2}
a_{\ell_2\ell_3}\sigma_{\ell_3\ell_4}a_{\ell_4\ell_5}
\sigma_{\ell_5\ell_6}a_{\ell_6\ell_1}&=
\sigma_{i_1i_1}a_{i_1j_1}\sigma_{j_1j_1}a_{j_1i_2}
\sigma_{i_2j_2}a_{j_2i_1}\nonumber\\
&=\frac18\1((\bds i,\bds j)\in\mathscr C_2)
v_1(I^{i_1})v_2(J^{j_1})v(I^{i_2}\cap J^{j_2}).
\end{align}

In the cases A.3 and A.4, it is easily seen that the corresponding summand
in the right hand side of (\ref{mu3}) is $\not=0$ only if
$\ell_5=\ell_6$. Using the symmetry of $a_{\ell\ell'}$s and
$\sigma_{\ell\ell'}$s, we infer that the results in these cases are
equal and equal to the result of the case A.2.

\underline{Case B: $\ell_1>N_1$} We want to evaluate the term
$$
\mu_{3,B}=
\sum_{\ell_1>N_1}\sum_{\ell_2,\ldots,\ell_6=1}^N \sigma_{\ell_1\ell_2}
a_{\ell_2\ell_3}\sigma_{\ell_3\ell_4}a_{\ell_4\ell_5}
\sigma_{\ell_5\ell_6}a_{\ell_6\ell_1}.
$$
In view of the symmetry of matrices $A$ and $\Sigma$, we can rewrite
$\mu_{3,B}$ in the form
$$
\mu_{3,B}=
\sum_{\ell_1>N_1}\sum_{\ell_2,\ldots,\ell_6=1}^N \sigma_{\ell_6\ell_5}
a_{\ell_5\ell_4}\sigma_{\ell_4\ell_3}a_{\ell_3\ell_2}
\sigma_{\ell_2\ell_1}a_{\ell_1\ell_6}.
$$
Since $a_{\ell_1\ell_6}\not=0$ and $\ell_1>N_1$ entails $\ell_6\le N_1$,
and $a_{\ell_1\ell_6}\not=0$ and $\ell_6\le N_1$ entails $\ell_1> N_1$,
we get
$
\mu_{3,B}=
\sum_{\ell_6\le N_1}\sum_{\ell_1,\ell_2,\ldots,\ell_5=1}^N
\sigma_{\ell_6\ell_5}a_{\ell_5\ell_4}\sigma_{\ell_4\ell_3}a_{\ell_3\ell_2}
\sigma_{\ell_2\ell_1}a_{\ell_1\ell_6}.
$
By reordering the indices we get $\mu_{3,B}=\mu_{3,A}$ and the
assertion of the proposition follows.
\end{proof}
\begin{corollary}
The terms $\mu_2$ and $\mu_3$ may alternatively be computed by formulae (\ref{mu221})-(\ref{mu321}).
\end{corollary}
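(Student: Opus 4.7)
The corollary amounts to rewriting the two sums over chains in Proposition~\ref{propmu} into the closed form of~(\ref{mu221})--(\ref{mu321}); this is pure algebraic manipulation powered by one geometric fact and one partition identity. The geometric fact is: if a single interval $J\in\Pi^2$ intersects two distinct intervals $I^a,I^b\in\Pi^1$, then $J$ spans the entire gap between $I^a$ and $I^b$, so two distinct intervals $J^{j_1}\neq J^{j_2}$ of $\Pi^2$ cannot both intersect the same pair of distinct intervals from $\Pi^1$ (and symmetrically after swapping the roles of $\Pi^1$ and $\Pi^2$). The partition identity is $\sum_{j:\,I\cap J^j\neq\emptyset}v(I\cap J^j)=v(I)$ (and symmetrically for $v_1,v_2$).

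For $\mu_2$, a chain of length one is simply a pair $(i,j)$ with $I^i\cap J^j\neq\emptyset$, so the second sum in Proposition~\ref{propmu} is immediately $\sum_{I,J}v_1(I)v_2(J)\KIJ$. For chains of length two, the geometric fact rules out the case $i_1\neq i_2$ AND $j_1\neq j_2$, leaving exactly the three cases $(i_1=i_2,j_1=j_2)$, $(i_1=i_2,j_1\neq j_2)$, $(i_1\neq i_2,j_1=j_2)$; using the partition identity they sum respectively to $\sum_{I,J}v(I\cap J)^2$, $\sum_I v(I)^2-\sum_{I,J}v(I\cap J)^2$ and $\sum_J v(J)^2-\sum_{I,J}v(I\cap J)^2$, which yields~(\ref{mu221}).

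For $\mu_3$, the same classification of $\mathscr C_2$ takes care of the second sum: the three cases collapse to $\sum_{I,J}v_1(I)v_2(J)\bigl[v(I\cap J)+(v(I)-v(I\cap J))+(v(J)-v(I\cap J))\bigr]\KIJ=\sum_{I,J}v_1(I)v_2(J)v(I\cup J)\KIJ$, which matches the $\tfrac34$-term in~(\ref{mu321}). For the first sum over $\mathscr C_3$, I would classify chains according to the set partitions of $\{1,2,3\}$ induced by the values $i_1,i_2,i_3$ and by $j_1,j_2,j_3$. The geometric fact then forces the following dichotomy: either $i_1=i_2=i_3$, or $j_1=j_2=j_3$, or both index sequences have the ``two-equal-plus-one'' pattern placed compatibly (no ``three-distinct'' case survives on either side). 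The ``all-$i$-equal'' chains sum to $\sum_I v(I)^3$ (symmetrically $\sum_J v(J)^3$); subtracting the double-counted $\sum_{I,J}v(I\cap J)^3$ where both index sequences coincide, and computing one representative of the surviving mixed sub-case as $\sum_{I,J}v(I\cap J)[v(I)-v(I\cap J)][v(J)-v(I\cap J)]\KIJ$ (multiplied by $3$ by cyclic rotation symmetry of chains), the resulting algebra collapses to the bracket in~(\ref{mu321}).

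The main bookkeeping obstacle lies in the $\mathscr C_3$ analysis: among the nine candidate combinations of $(i$-partition pattern$,\,j$-partition pattern$)$, one must check carefully that only the three cyclically-equivalent mixed sub-cases survive the geometric constraint and that they contribute equally. Once this is done, the remaining steps are straightforward applications of the partition identity and expansion of the product $[v(I)-v(I\cap J)][v(J)-v(I\cap J)]$.
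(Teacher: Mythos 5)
Your proposal is correct and follows essentially the same route as the paper: both rest on the same two ingredients (the incompatibility of two distinct intervals of $\Pi^1$ each meeting two distinct intervals of $\Pi^2$, and the telescoping identity $\sum_{J}v(I\cap J)=v(I)$), and both reduce the chain sums of Proposition~\ref{propmu} by a case analysis on index coincidences. The only cosmetic difference is in how $\mathscr C_3$ is partitioned --- you split symmetrically into all-$i$-equal, all-$j$-equal (with inclusion--exclusion for the overlap) and the three cyclic mixed patterns, whereas the paper classifies by $\#\{j_1,j_2,j_3\}$ --- and the resulting algebra collapses to the same bracket in~(\ref{mu321}).
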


\begin{proof}
Let us prove the second equality.
Let us denote by $T_1$ and $T_2$ respectively the first and the second sums
in the expression of $\mu_3$ given in Proposition~\ref{propmu}.
In this notation, $4\mu_3=T_1+3T_2$.

On the one hand,
$(\bds i,\bds j)\in\mathscr C_2$ implies that both $I^{i_1}$ and $I^{i_2}$
have non-empty intersections with each of $J^{j_1}$ and $J^{j_2}$. This
obviously implies that $i_1=i_2$ or $j_1=j_2$. Therefore,
\begin{align*}
T_2&=\sum_{(\boldsymbol i,\boldsymbol j)\in\mathscr C_{2}}
v_1(I^{i_1})v_2(J^{j_1})v(I^{i_2}\cap J^{j_2})\\
&=\sum_{I\!,J,J'} v_1(I)v_2(J)v(I\cap J')\KIJ
+\sum_{I,I'\!\!,J} v_1(I)v_2(J)v(I'\cap J)\KIJ 
-\sum_{I\!,J} v_1(I)v_2(J)v(I\cap J),
\end{align*}
the last term resulting from the fact that the terms with $i_1=i_2$
and $j_1=j_2$ are present both in the first and in the second sums
of the right hand side. Since the set of intervals $\Pi^2=\{J^j\}$
forms a partition of $[0,T]$, we have $\sum_{J'} v(I\cap J')=v(I)$.
Similarly, $\sum_{I'} v(I'\cap J)=v(J)$. Therefore
\begin{equation}\label{T2'}
T_2=\sum_{I\!,J} v_1(I)v_2(J)[(v(I)+v(J))\KIJ
-v(I\cap J)]=v(I\cup J)\KIJ.
\end{equation}
To compute the term $T_1$, we decompose the sum
$\sum_{(\bds i,\bds j)\in\mathscr C_3}$ into the sum of
three terms
$$
T_{1q}=\sum_{\substack{(\bds i,\bds j)\in\mathscr C_3\\
\#\{j_1,j_2,j_3\}=q}}
\prod_{p=1}^3 v(I^{i_p}\cap J^{j_p}),\ q=1,2,3.
$$
If $q=1$, then $J^{j_1}=J^{j_2}=J^{j_3}:=J$ and
using the same arguments as for evaluating $T_2$, we get
$T_{11}=\sum_{J} v(J)^3$. If $q=2$, then $j_1=j_2\not=j_3$
or $j_1=j_3\not=j_2$ or $j_1\not=j_2=j_3$. Because of the symmetry,
it suffices to consider one of these cases. Let $j_1=j_2\not=j_3$
and set $J=J^{j_1}$ and $J'=J^{j_3}$. The relations
$(\bds i,\bds j)\in\mathcal C_3$ implies that both $J$ and $J'$ have
non-empty intersections with both $I^{i_1}$ and $I^{i_3}$. Therefore,
$I^{i_1}=I^{i_3}:=I$ and setting $I^{i_2}=I'$ we get
\begin{align*}
T_{12}&=3\!\!\sum_{J\not=J',I,I'}
v(I\cap J)v(I'\cap J)v(I\cap J')
=3\!\!\sum_{J\not=J',I}v(I\cap J)v(J)v(I\cap J')\\
&=3\sum_{I\!,J}v(I\cap J)v(J)[v(I)-v(I\cap J)].
\end{align*}
In the case when all indices $j_1,j_2$ and $j_3$ are different,
it is easily seen that $(\bds i,\bds j)\in\mathscr C_3$ entails
$i_1=i_2=i_3$. Therefore,
\begin{align*}
T_{13}&=\sum_{\substack{I\!,J,J',J''\\\#\{J,J'J''\}=3}}
v(I\cap J)v(I\cap J')v(I\cap J'')\\
&=\sum_{I\!,J,J',J\not=J'}
v(I\cap J)v(I\cap J')[v(I)-v(I\cap J')-v(I\cap J)]\\
&=\sum_{I\!,J,J',J\not=J'}
v(I\cap J)v(I\cap J')v(I)-2\sum_{I\!,J\not=J'}v(I\cap J)^2v(I\cap J').
\end{align*}
Using the identity $\sum_{J':J\not=J'} v(I\cap J')=v(I)-v(I\cap J)$ we
get $T_{13}=\sum_I v(I)^3-\sum_{I\!,J}v(I\cap J)^2[3v(I)-2v(I\cap J)]$.
Summing up the terms $T_{11},T_{12},T_{1,3}$
and $T_2$ we get equality (\ref{mu321}).
Equality (\ref{mu221}) can be proved along the same lines.
\end{proof}

\begin{remark}
If the observations are synchronous, that is $\Pi^1=\Pi^2=\Pi$, then
$\mu_2$ and $\mu_3$ have the following simple expressions:
\begin{align*}
2\mu_2&=\sum_{I\in\Pi}\ [v(I)^2+v_1(I)v_2(I)],\qquad
4\mu_3=\sum_{I\in\Pi}\ [v(I)^3+3v_1(I)v_2(I)v(I)].
\end{align*}
\end{remark}

\begin{lemma}\label{lambdasup}
Assume that we are given two sequences of partitions
$\Pi^1_n=\{I_{n}^i,i\le N_{1,n}\}$ and $\Pi^2_n=\{J_{n}^j,j\le N_{2,n}\}$
of the interval $[0,T]$. Define the matrices $A_n$ and $\Sigma_n$
by (\ref{a}) and (\ref{sigma}). If the functions $\sigma_1$ and
$\sigma_2$ are bounded on $[0,T]$ by some constant $\sigma$, then
$$
\max_\ell \lambda_{\ell,n}^2=\|(\Sigma_n^{1/2}A_n\Sigma_n^{1/2})^2\|
\le 3\sigma^4 r_n^2,
$$
where $r_n=[(\max_{i} |I_{n}^i|)\vee(\max_j |J_{n}^j|)]$.
\end{lemma}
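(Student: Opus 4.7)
Since $B_n := \Sigma_n^{1/2} A_n \Sigma_n^{1/2}$ is a real symmetric matrix, its operator norm coincides with $\max_\ell |\lambda_{\ell,n}|$; moreover, the eigenvalues of $B_n$ are the same as those of the (non-symmetric) matrix $\Sigma_n A_n$ because the two matrices are similar through conjugation by $\Sigma_n^{1/2}$ (and the singular case is handled by a limiting argument). Hence I would reduce the problem to bounding the spectral radius $\rho(\Sigma_n A_n)$ and then square.

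To estimate $\rho(\Sigma_n A_n)$ I would use the standard inequality $\rho(M) \le \|M\|_1$, where $\|\cdot\|_1$ is the maximum absolute column sum. Writing $\Sigma_n$ and $A_n$ in block form
\[
\Sigma_n = \begin{pmatrix} D_1 & C \\ C^\top & D_2 \end{pmatrix},
\qquad
A_n = \frac12 \begin{pmatrix} 0 & K \\ K^\top & 0 \end{pmatrix},
\]
with $D_1 = \mathop{\rm diag}(v_1(I_n^i))$, $D_2 = \mathop{\rm diag}(v_2(J_n^j))$, $C_{ij} = v(I_n^i \cap J_n^j)$ and $K_{ij} = \1(I_n^i \cap J_n^j \neq \emptyset)$, a direct computation yields
\[
\Sigma_n A_n = \frac12 \begin{pmatrix} C K^\top & D_1 K \\ D_2 K^\top & C^\top K \end{pmatrix}.
\]
For a column index $k' \le N_{1,n}$ (the case $k' > N_{1,n}$ being symmetric), the $\ell_1$-norm of that column splits into contributions from the two blocks of rows, which are then estimated using $|v(I^k \cap J^j)| \le \sigma^2 |I^k \cap J^j|$, $v_2(J^j) \le \sigma^2 |J^j|$ and the partition identity $\sum_k |I^k \cap J^j| = |J^j|$.

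The essential geometric input is the bound
\[
\sum_{j:\,J_n^j \cap I \neq \emptyset} |J_n^j| \;\le\; |I| + 2 r_n \;\le\; 3 r_n,\qquad \forall I \in \Pi_n^1\text{ with }|I|\le r_n,
\]
together with its analogue with the roles of $\Pi^1_n$ and $\Pi^2_n$ exchanged. This is true because the intervals $J_n^j$ meeting $I$ form a contiguous block of the partition $\Pi^2_n$: those strictly contained in $I$ have total length at most $|I|\le r_n$, while the (at most two) boundary $J_n^j$ extend outside $I$ by at most their own length, which is $\le r_n$. Plugging this into both block contributions controls each column sum by a constant multiple of $\sigma^2 r_n$, and squaring produces $\max_\ell \lambda_{\ell,n}^2 \le 3 \sigma^4 r_n^2$.

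The entire argument is of a book-keeping nature; the only genuinely substantive step is the geometric interval-length estimate above, and the main obstacle is simply tracking constants carefully through the block decomposition so that the factor $3$ in the final bound is reached rather than a looser multiple.
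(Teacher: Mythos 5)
Your argument takes a genuinely different route from the paper's, and it is sound as far as the order of magnitude is concerned, but as written it does not deliver the stated constant. The paper first replaces $(\Pi^1_n,\Pi^2_n)$ by coarser partitions $(\tilde\Pi^1_n,\tilde\Pi^2_n)$ obtained by merging all intervals of one partition contained in a single interval of the other; this leaves $\hat\theta_n$, hence all the $\mu_{p,n}=\tr[(\Sigma_nA_n)^p]$ and therefore the spectrum, unchanged, and it guarantees that each interval meets at most $3$ intervals of the other partition. It then uses the factorization $\|(\Sigma_n^{1/2}A_n\Sigma_n^{1/2})^2\|\le\|\Sigma_n\|^2\|A_n\|^2$ together with $\|A_n\|^2\le 3/4$ (Cauchy--Schwarz plus the ``at most $3$'' property) and $\|\Sigma_n\|\le 2\sigma^2 r_n$. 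Your route --- bounding the spectral radius of $\Sigma_nA_n$ by its maximum column sum and invoking the geometric estimate $\sum_{j:J^j\cap I\neq\emptyset}|J^j|\le|I|+2r_n$ --- is attractive because it sidesteps the coarsening step entirely: the $v$-weights in the columns of $\Sigma_nA_n$ automatically tame the situation where one interval contains many intervals of the other partition, which is precisely what forces the paper to coarsen before it can control $\|A_n\|$.

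The gap is the constant. Carrying out your own estimates, the column indexed by $k'\le N_{1,n}$ contributes $\tfrac12\sum_i\bigl|\sum_j v(I^i\cap J^j)K_{I^{k'}J^j}\bigr|\le\tfrac12\sigma^2\sum_j K_{I^{k'}J^j}|J^j|$ from the block $CK^\top$ and $\tfrac12\sum_j v_2(J^j)K_{I^{k'}J^j}\le\tfrac12\sigma^2\sum_j K_{I^{k'}J^j}|J^j|$ from the block $D_2K^\top$, so the column sum is at most $\sigma^2(|I^{k'}|+2r_n)\le 3\sigma^2 r_n$. This yields $\max_\ell|\lambda_{\ell,n}|\le 3\sigma^2 r_n$ and hence $\max_\ell\lambda_{\ell,n}^2\le 9\sigma^4 r_n^2$, not $3\sigma^4 r_n^2$; no amount of careful book-keeping within the $\|\cdot\|_1$ bound recovers the factor $3$ after squaring, and the dual row-sum norm is not $O(r_n)$ here (a long $I^i$ can meet arbitrarily many $J^j$, making $\sum_j v_1(I^i)K_{I^iJ^j}$ large), so the usual $\sqrt{\|M\|_1\|M\|_\infty}$ refinement is unavailable without the paper's coarsening device. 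You flag the constant as ``the main obstacle'' but do not close it. For every downstream use of the lemma only the bound $\max_\ell\lambda_{\ell,n}^2=O(\sigma^4r_n^2)$ matters (the constant merely rescales $\alpha_n$ in Lemma~\ref{071217-1}), so your proof establishes everything the paper actually needs; but the inequality as stated, with the constant $3$, is not proved by your argument.
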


\begin{proof}
Let us define a new partition $\tilde\Pi^1_n$ as follows:
$I\in\tilde\Pi^1_n$ if and only if either $I\in\Pi^1_n$ and it has
non-empty intersection with two distinct intervals from $\Pi^2_n$ or
there is $J\in\Pi^2_n$ such that $I$ is the union of all intervals
from $\Pi^1_n$ included in $J$. The partition $\tilde\Pi^2_n$ is
defined analogously. It is easy to check that the estimator
$\hat\theta_n$ based on $(\tilde\Pi^1_n,\tilde\Pi^2_n)$ is equal to
the one based on $(\Pi^1_n,\Pi^2_n)$. It follows that
$\mu_{p,n}=\tilde\mu_{p,n}$ for every $p\in\bbN$. Therefore, the
relation $\max_{\ell} \lambda_{\ell,n}^2=\lim_{p\to\infty}
\mu_{2p,n}^{1/p}$ implies that $\max_{\ell}
\lambda_{\ell,n}^2=\max_{\ell}\tilde\lambda_{\ell,n}^2$. It is clear
that $r_n=\tilde r_n$, but the advantage of working with
$(\tilde\Pi^1_n,\tilde\Pi^2_n)$ is that
\begin{align}\label{20}
\max_{J\in\tilde\Pi^2}\sum_{I\in\tilde\Pi^1} \KIJ \le 3,\quad
\max_{I\in\tilde\Pi^1}\sum_{J\in\tilde\Pi^2} \KIJ \le 3.
\end{align}
In the remaining of this proof, without loss of generality we assume
that (\ref{20}) is fulfilled for partitions $(\Pi^1,\Pi^2)$. The
estimate $\|(\Sigma_n^{1/2}A_n\Sigma_n^{1/2})^2\|\le
\|\Sigma_n\|^2\|A_n\|^2$ implies that it suffices to estimate
$\|A_n\|$ and $\|\Sigma_n\|$. To bound from above $\|A_n\|^2$, we
use  $\|A_n\|^2=\max_{u:|u|=1} |A_nu|^2$ and
\begin{align*}
|A_nu|^2&=\frac14
\sum_{i}\bigg(\sum_{j} K_{I^iJ^j}u_{N_1+j}\bigg)^2+\frac14
\sum_{j}\bigg(\sum_{i} K_{I^iJ^j}u_{i}\bigg)^2.
\end{align*}
Applying the Cauchy-Schwarz inequality and changing the order of
summation, we get the inequalities $\sum_{i}\big(\sum_{j}
K_{I^iJ^j}u_{N_1+j}\big)^2\le \frac{3}4 \sum_{j} u_{N_1+j}^2$ and
$\sum_{j}\big(\sum_{i} K_{I^iJ^j}u_{i}\big)^2\le \frac{3}4\sum_{i}
u_{i}^2$, which imply that $\|A_n\|^2\le 3/4$.

On the other hand,
\begin{align*}
\|\Sigma_n\|
&=\max_{u:|u|=1}\sum_{\ell,\ell'=1}^N \sigma_{\ell,\ell'} u_\ell u_{\ell'}
=\max_{u:|u|=1} \Big(\sum_{\ell=1}^{N} \sigma_{\ell,\ell} u_\ell^2+
2\sum_{i,j}v(I_{n}^i\cap J_{n}^j) u_iu_{N_1+j}\Big).
\end{align*}
Since $\sigma_{\ell,\ell'}$s are given by (\ref{sigma}),
the first sum in the right hand side is
bounded by $\sigma^2(\max_{i} |I_{n}^i|)\vee(\max_j |J_{n}^j|)$,
whereas the second sum can be bounded using the inequality relating the
geometrical and the arithmetical means :
\begin{align*}
2\sum_{i,j}v(I_{n}^i\cap J_{n}^j) u_iu_{N_1+j}&\le
\sum_{i,j}v(I_{n}^i\cap J_{n}^j) u_i^2+
\sum_{i,j}v(I_{n}^i\cap J_{n}^j) u_{N_1+j}^2\\
&=\sum_i v(I_{n}^i) u_i^2+\sum_j v(J_{n}^j) u_{N_1+j}^2\\
&\le |u|^2\sigma^2 (\max_{i} |I_{n}^i|)\vee(\max_j |J_{n}^j|).
\end{align*}
This completes the proof of the lemma.
\end{proof}

As a by-product of the preceding lemma, we give below a simple sufficient
condition for the asymptotic normality of $\hat\theta_n$.

\begin{corollary}\label{cor2}
In the notation of Lemma~\ref{lambdasup}, if
\begin{align}\label{IJ}
\lim_{n\to\infty} \frac{r_n^2} {\mu_{2,n}}=0,
\end{align}
then $(\hat\theta_n-\theta)/\sqrt{2\mu_{2,n}}$ converges in distribution
to a standard Gaussian random variable.
\end{corollary}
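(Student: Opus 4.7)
The plan is to deduce Corollary \ref{cor2} directly from Proposition \ref{WCONV} by feeding in the explicit bound on the spectral radius supplied by Lemma \ref{lambdasup}. Proposition \ref{WCONV} already guarantees the desired central limit theorem under the abstract hypothesis
\[
\lim_{n\to\infty}\frac{\|\llambda_n\|_\infty^2}{\mu_{2,n}}=0,
\]
so the only task is to verify this hypothesis under the concrete condition (\ref{IJ}).

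First I would recall that, by definition, $\|\llambda_n\|_\infty^2=\max_\ell \lambda_{\ell,n}^2$, where $\lambda_{\ell,n}$ are the eigenvalues of $\Sigma_n^{1/2}A_n\Sigma_n^{1/2}$. Lemma \ref{lambdasup} yields the deterministic estimate
\[
\max_\ell \lambda_{\ell,n}^2 \leq 3\sigma^4 r_n^2,
\]
with $\sigma$ being the uniform upper bound on $\sigma_1,\sigma_2$ (which is available since $\sigma_1,\sigma_2$ are assumed deterministic and, in the framework of this section, can be taken bounded on the fixed compact $[0,T]$; otherwise one bounds them by $\sigma=\sup_{t\in[0,T]}(\sigma_{1,t}\vee\sigma_{2,t})<\infty$). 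Dividing through by $\mu_{2,n}$ gives
\[
\frac{\|\llambda_n\|_\infty^2}{\mu_{2,n}}\ \leq\ 3\sigma^4\cdot\frac{r_n^2}{\mu_{2,n}},
\]
and the right-hand side tends to zero by the assumption (\ref{IJ}).

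Second, an appeal to Proposition \ref{WCONV} with $A_n,\Sigma_n$ as in Lemma \ref{lambdasup} then immediately furnishes
\[
\frac{\hat\theta_n-\theta_n}{\sqrt{2\mu_{2,n}}}\ \xrightarrow[n\to\infty]{D}\ \mathcal{N}(0,1),
\]
where $\theta_n=\Ex[\hat\theta_n]=\tr[\Sigma_n A_n]=\theta$ (in the driftless Gaussian setup the estimator is unbiased, as observed in Section \ref{S2}), so that $\theta_n$ can be replaced by $\theta$. This completes the argument.

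There is essentially no obstacle: the whole proof is a one-line chain of implications, Lemma \ref{lambdasup} $\Rightarrow$ hypothesis of Proposition \ref{WCONV} $\Rightarrow$ conclusion. The only minor point to watch is the identification $\theta_n=\theta$, which relies on the fact that, in the Gaussian setup considered in this section (no drift), $\hat\theta_n$ is unbiased for $\theta$; this was already established via the MLE argument of Section \ref{S2}.
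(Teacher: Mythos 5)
Your proposal is correct and follows exactly the paper's own proof: combine the bound $\max_\ell\lambda_{\ell,n}^2\le 3\sigma^4 r_n^2$ from Lemma~\ref{lambdasup} with hypothesis (\ref{IJ}) to verify the condition of Proposition~\ref{WCONV}, then conclude. The extra remark identifying $\theta_n=\theta$ via unbiasedness in the driftless Gaussian setup is a correct (if implicit in the paper) detail.
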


\begin{proof}
According to Proposition \ref{WCONV}, it is enough to show that
$
\lim_{n\to \infty}\frac{\|(\Sigma_n^{1/2}A_n\Sigma_n^{1/2})^2\|}{
\tr[(\Sigma_nA_n)^2)]}=0.
$
This convergence follows from assumption (\ref{IJ}) and
Lemma~\ref{lambdasup}.
\end{proof}

\subsubsection{Expansion of the characteristic function for random sampling schemes}\label{SSec4}
We assume now that the partitions $\Pi^1_n$ and $\Pi^{2}_n$
are random and independent of 
$\{X_{1,t}-X_{1,0},X_{2,t}-X_{2,0}\}_{t\in[0,T]}$.
We denote by $\Ex^{\Pi}$ the conditional
expectation given $\Pi_n$, where $\Pi_n=(\Pi^1_n,\Pi^2_n)$.
Since in this setup the quantities $r_n$ and $\mu_{2,n}$ ---
introduced in Lemma~\ref{lambdasup} and in Proposition~ \ref{WCONV}, respectively ---
are random, Corollary~\ref{cor2} can not be applied directly.
The following result gives a sufficient condition for the
convergence in distribution of Corollary~\ref{cor2} to hold
in the setup of random sampling scheme.

\begin{proposition}\label{prop3}
Let $r_n$ be defined as in Lemma~\ref{lambdasup}. If
$r_n^2/\mu_{2,n}$ tends to zero in probability as $n\to\infty$, then
$(\hat\theta_n-\theta_n)/\sqrt{2\mu_{2,n}}$ converges in
distribution to a standard normal random variable.
If moreover, $2\mu_{2,n}/b_n\xrightarrow[n\to\infty]{P} \mfc$ for some
deterministic sequence $\{b_n\}$ and some
positive constant $\mfc$, then
$(\hat\theta_n-\theta)/\sqrt{b_n}\xrightarrow[n\to\infty]{D}
\mathcal \mathcal N(0,\mfc)$.
\end{proposition}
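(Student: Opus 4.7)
The plan is to reduce to the deterministic (i.e.\ conditional) framework already established in Section~\ref{SSec2}, exploiting that the sampling scheme is independent of the Brownian increments. First, I observe that since the non-empty intersections $\{I^i\cap J^j:K_{I^iJ^j}=1\}$ form a partition of $[0,T]$ almost surely, one has
\[
\theta_n=\Ex^\Pi[\hat\theta_n]=\tr[\Sigma_nA_n]=\sum_{i,j}K_{I^iJ^j}\,v(I^i\cap J^j)=v([0,T])=\theta
\]
almost surely. Hence the two centerings appearing in the statement coincide and only one convergence has to be established.

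Conditionally on $\Pi_n$, the vector $\xxi$ is Gaussian, so the bound~\eqref{phi2} applied to the conditional law yields, for every $v\in\bbR$ with $|v|<(\sqrt{2}\bar\alpha_n)^{-1}$,
\[
\Big|\log\Ex^\Pi\big[e^{\ii v(\hat\theta_n-\theta)/\sqrt{2\mu_{2,n}}}\big]+\tfrac{v^2}{2}\Big|\;\le\;-v^2\log\big(1-\sqrt{2}|v|\bar\alpha_n\big),
\]
where $\bar\alpha_n^2=\|\llambda_n\|_\infty^2/\mu_{2,n}$. Lemma~\ref{lambdasup} gives $\bar\alpha_n^2\le 3\sigma^4\,r_n^2/\mu_{2,n}$, which tends to zero in probability by assumption. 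Consequently, for each fixed $v$ the event $\{|v|<(\sqrt{2}\bar\alpha_n)^{-1}\}$ has probability tending to one, and on this event the right-hand side above vanishes in probability. Combining with the trivial bound $|\Ex^\Pi[\cdots]|\le 1$ on the complementary event, this shows that
\[
\Ex^\Pi\big[e^{\ii v(\hat\theta_n-\theta)/\sqrt{2\mu_{2,n}}}\big]\;\xrightarrow[n\to\infty]{\Pb}\;e^{-v^2/2}.
\]

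Since this $\Pi_n$-measurable random variable is uniformly bounded by $1$, dominated convergence transfers the convergence in probability into convergence in $L^1$, so that after taking unconditional expectations,
\[
\Ex\big[e^{\ii v(\hat\theta_n-\theta)/\sqrt{2\mu_{2,n}}}\big]\;\xrightarrow[n\to\infty]{}\;e^{-v^2/2}\qquad \forall v\in\bbR.
\]
L\'evy's continuity theorem then yields the first convergence claim. For the second, the factorization
\[
\frac{\hat\theta_n-\theta}{\sqrt{b_n}}\;=\;\sqrt{\frac{2\mu_{2,n}}{b_n}}\cdot\frac{\hat\theta_n-\theta}{\sqrt{2\mu_{2,n}}},
\]
combined with the hypothesis $2\mu_{2,n}/b_n\xrightarrow{\Pb}\mfc$ and Slutsky's lemma, delivers convergence in distribution to $\mathcal N(0,\mfc)$. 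The only mild subtlety is the random domain restriction $|v|<(\sqrt{2}\bar\alpha_n)^{-1}$ in~\eqref{phi2}, but for any fixed $v$ this event occurs with probability tending to one, so it poses no genuine obstacle.
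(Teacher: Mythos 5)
Your proof is correct and follows essentially the same route as the paper: both rest on the conditional characteristic-function bound (\ref{phi2}) together with Lemma~\ref{lambdasup}, and both finish with Slutsky's lemma. The only cosmetic difference is that the paper passes from conditional to unconditional convergence via the subsequence/sub-subsequence principle (through Corollary~\ref{cor2}), whereas you do it directly by combining convergence in probability of the bounded conditional characteristic function with dominated convergence; your observation that $\theta_n=\theta$ almost surely is also correct and left implicit in the paper.
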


\begin{proof} Denote $\sigma[\Pi]=\sigma[\Pi_n,\ n\in\bbN]$. 
Our aim is to show that for every $u\in\bbR$ the convergence
$
\E\Big[\exp\Big({\ii}u(\hat\theta_n-\theta_n)/\sqrt{2\mu_{2,n}}\Big)\Big]
\xrightarrow[n\to\infty]{} \E[e^{-\half u^2}]
$
holds. Let us denote $a_n=\E\Big[\exp\Big({\ii}u(\hat\theta_n-\theta_n)/\sqrt{2\mu_{2,n}}\Big)\Big]$ 
and $a=\E[e^{-\half u^2}]$. To show the desired convergence, it suffices to check that every convergent
subsequence of $\{a_n\}$ converges to $a$. For checking this property, one can simply remark that for
any subsequence $\{a_{n_k}\}$, there is a sub-subsequence $\{n_{k_j}\}$ such that $r_{n_{k_j}}^2/\mu_{2,n_{k_j}}$ 
converges almost surely. Then, Corollary~\ref{cor2} implies that $a_{n_{k_j}}$ converges to $a$ as $j\to\infty$. 
Therefore, $a$ is also the limit of the sequence $\{a_{n_k}\}$ and 
the first assertion of the proposition follows. The second assertion follows from the first one by a simple application
of the Slutsky lemma.
\end{proof}


{From now on}, we assume that the assumptions of Proposition~\ref{prop3} are fulfilled and
aim at finding the asymptotic expansion of the distribution of the random variable $\calx_n=(\hat\theta_n-\theta)/\sqrt{b_n}$ as $n\to\infty$. The first step in deriving the asymptotic
expansion of a distribution is the expansion of the characteristic function. As usual, the desired expansion
involves the $r$-th conditional cumulant of  $\calx_n$ given $\Pi$, henceforth denoted by $\kappa_r[\calx_n]$.
Let $\bar{\lambda}_{r,n}$ be the normalized $r$-th  conditional cumulant of $\calx_n$:
\beas
\bar{\lambda}_{r,n}= b_n^{-\frac{r-2}{2}}\kappa_r[\calx_n]
=2^{r-1}(r-1)! \>b_n^{-r+1}\mu_{r,n}.
\eeas
Note that this notation is consistent with those introduced in (\ref{bar_lambda}).
\begin{lemma}\label{071217-1}
For every positive integer $r$, we have
\bea\label{boundmuk}
|\mu_{r,n}|\le \sum_\ell |\lambda_{\ell,n}|^r\leq \max_\ell
|\lambda_\ell|^{r-2} \mu_{2,n}\le
(\alpha_n\sqrt{b_n})^{r-2}\mu_{2,n},
\eea
where $\alpha_n=\sqrt{3}\sigma^2 r_nb_n^{-1/2}$. In terms of 
the conditional cumulants, this is equivalent to 
$
|\kappa_r[\calx_n]|\leq
c_r
\alpha_n^{r-2}\>\bar{\lambda}_{2,n}$,
where $c_r=2^{r-2}(r-1)!$.
\end{lemma}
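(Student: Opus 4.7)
\textbf{Proof plan for Lemma \ref{071217-1}.}

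The plan is to establish the chain of three inequalities separately (they are all elementary consequences of what has been developed so far), and then derive the cumulant version by a direct comparison of the Taylor expansion~(\ref{logfi1}) with the generating series of the cumulants.

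For the first inequality I would use that $\mu_{r,n}=\tr[(\Sigma_n A_n)^r]=\sum_\ell\lambda_{\ell,n}^r$, which was established just before~(\ref{logfi1}); the triangle inequality then yields $|\mu_{r,n}|\le \sum_\ell |\lambda_{\ell,n}|^r$. The second inequality is the standard bound
\[
\sum_\ell |\lambda_{\ell,n}|^r = \sum_\ell |\lambda_{\ell,n}|^{r-2}\lambda_{\ell,n}^2 \le \max_\ell |\lambda_{\ell,n}|^{r-2}\sum_\ell \lambda_{\ell,n}^2 = \max_\ell |\lambda_{\ell,n}|^{r-2}\,\mu_{2,n}.
\]
For the third inequality I would invoke Lemma~\ref{lambdasup}, which gives $\max_\ell \lambda_{\ell,n}^2\le 3\sigma^4 r_n^2$, so that $\max_\ell |\lambda_{\ell,n}|\le \sqrt{3}\,\sigma^2 r_n=\alpha_n\sqrt{b_n}$. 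Raising to the power $r-2$ (here one quietly assumes $r\ge 2$; for $r=1$ the statement reduces to $|\mu_{1,n}|=|\sum_\ell\lambda_{\ell,n}|\le \sum_\ell |\lambda_{\ell,n}|$, and the subsequent bound is vacuous) yields the third inequality.

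To translate this into the cumulant bound, I would match coefficients in (\ref{logfi1}) with the formal power series defining cumulants, $\log\varphi_{\hat\theta-\theta}(u)=\sum_{k\ge 1}\frac{(\ii u)^k}{k!}\kappa_k[\hat\theta]$. Since the left-hand side in (\ref{logfi1}) is $\ii\theta u-u^2\mu_{2,n}+\sum_{k\ge 3}\frac{(2\ii u)^k}{2k}\mu_{k,n}$, identification of coefficients gives $\kappa_r[\hat\theta_n]=2^{r-1}(r-1)!\,\mu_{r,n}$ for every $r\ge 2$ (valid by uniqueness of the power series expansion in a neighborhood of $0$, cf.~(\ref{logfi})). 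Using the homogeneity $\kappa_r[\calx_n]=b_n^{-r/2}\kappa_r[\hat\theta_n]$ of cumulants, I would combine this with (\ref{boundmuk}) to obtain
\[
|\kappa_r[\calx_n]| \le b_n^{-r/2}\cdot 2^{r-1}(r-1)!\cdot(\alpha_n\sqrt{b_n})^{r-2}\mu_{2,n}
= 2^{r-2}(r-1)!\,\alpha_n^{r-2}\cdot 2b_n^{-1}\mu_{2,n} = c_r\,\alpha_n^{r-2}\bar\lambda_{2,n},
\]
which is the announced inequality. There is no real obstacle here; the only point requiring care is the bookkeeping of the factors of $2$ coming from the $(2\ii u)^k$ versus $(\ii u)^k$ conventions in the two series, and the normalization $\bar\lambda_{2,n}=2b_n^{-1}\mu_{2,n}$ fixed in~(\ref{bar_lambda}).
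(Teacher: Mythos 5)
Your proof is correct and follows essentially the same route as the paper, which simply declares the lemma an immediate consequence of Lemma~\ref{lambdasup}: you spell out the triangle inequality, the factoring of $\max_\ell|\lambda_{\ell,n}|^{r-2}$, the spectral bound from Lemma~\ref{lambdasup}, and the coefficient identification $\kappa_r[\hat\theta_n]=2^{r-1}(r-1)!\,\mu_{r,n}$ that the paper leaves implicit in~(\ref{bar_lambda}). Your remark that the chain of inequalities is only meaningful for $r\ge 2$ is a fair observation (the paper only ever invokes the bound for $r\ge 3$), and your bookkeeping of the factors of $2$ and of the normalization $\bar\lambda_{2,n}=2b_n^{-1}\mu_{2,n}$ checks out.
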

\proof
This is an immediate consequence of Lemma \ref{lambdasup}. \qed

\begin{proposition}\label{prop4} Let the sequence $\{b_n\}$
be as in Proposition~\ref{prop3}. For some fixed $\mfc_1>0$, let
\begin{align*}
\calp_n(\delta)&=\big\{\Pi\,:\,
\alpha_n<\delta, \>\bar{\lambda}_{2,n}<\mfc_1\big\},\qquad \forall\delta>0.
\end{align*}
Then, for every $j\in\bbZ_+$, there exist some positive constants $C$ and $q$ such that
\begin{align*}
\frac{d^j}{du^j}\,\Big(\Ex^{\Pi}[e^{{\ii}u\calx_n}]\Big)&=
\frac{d^j}{du^j}\,\Big\{e^{-\frac{\bar{\lambda}_{2,n}u^2}2}
\bigg(1+\frac{({\ii}u)^3b_n^{1/2}}{6}\bar{\lambda}_{3,n} \Big)\Big\}+\mO(\delta^2)
(1+|u|^q)e^{-\frac{\bar{\lambda}_{2,n}}{2}u^2}
\end{align*}
for every $u$ satisfying $|u|\le C\delta^{-1/3}$ and for every $\Pi_n\in\calp_n(\delta)$.
In this formula,  $\mO(\delta^p)$ stands for a random variable depending only on partitions
$\Pi_n=(\Pi_n^{1},\Pi_n^{2})$ and satisfying the condition
$\limsup_{\delta\to 0}\sup_{n}\sup_{\Pi_n\in\calp_n(\delta)}
|\mO(\delta^p)|\delta^{-p}<\infty$.
\end{proposition}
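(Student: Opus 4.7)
The plan is to start from the cumulant expansion of the characteristic function provided by formula~(\ref{logfi1}) in Section~\ref{SSec2}, specialized to the rescaled variable $\calx_n=b_n^{-1/2}(\hat\theta_n-\theta)$. Since $\hat\theta_n-\theta$ is a centered quadratic form of a standard Gaussian vector with eigenvalues $\lambda_\ell$, the eigenvalues associated with $\calx_n$ are $\lambda_\ell/\sqrt{b_n}$, and a direct rewriting of~(\ref{logfi1}) in terms of the normalized cumulants $\bar\lambda_{r,n}$ defined in~(\ref{bar_lambda}) yields
\[
\log\Ex^\Pi[e^{\ii u\calx_n}]=-\frac{\bar\lambda_{2,n}u^2}{2}+R(u),\qquad R(u)=\sum_{k\ge 3}\frac{(\ii u)^kb_n^{(k-2)/2}\bar\lambda_{k,n}}{k!},
\]
for $|u|<1/(2\alpha_n)$. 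On $\calp_n(\delta)$, the inequality $\alpha_n<\delta$ ensures that this disk contains the whole range $|u|\le C\delta^{-1/3}$ as soon as $C\delta^{2/3}$ is sufficiently small.

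Next I would isolate the leading non-Gaussian contribution $T_1(u)=(\ii u)^3b_n^{1/2}\bar\lambda_{3,n}/6$, write $R=T_1+R_2$, and observe
\[
\Ex^\Pi[e^{\ii u\calx_n}]-e^{-\bar\lambda_{2,n}u^2/2}\bigl(1+T_1(u)\bigr)=e^{-\bar\lambda_{2,n}u^2/2}H(u),\qquad H=e^{R}-1-T_1.
\]
The elementary bound $|e^z-1-z|\le|z|^2 e^{|z|}/2$ yields $|H(u)|\le|R_2(u)|+\tfrac12|R(u)|^2 e^{|R(u)|}$. Applying Lemma~\ref{071217-1}, which gives $|b_n^{(r-2)/2}\bar\lambda_{r,n}|=|\kappa_r[\calx_n]|\le c_r\alpha_n^{r-2}\bar\lambda_{2,n}$, together with a geometric summation in the regime where $2|u|\alpha_n$ is small (valid on $\calp_n(\delta)\cap\{|u|\le C\delta^{-1/3}\}$ for $\delta$ small), produces $|R_2(u)|\le K_1\bar\lambda_{2,n}\alpha_n^2|u|^4$ and $|R(u)|\le K_2\bar\lambda_{2,n}|u|^3\alpha_n$. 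On our range $|R(u)|$ stays bounded by a constant, so $e^{|R|}$ is harmless, and one concludes $|H(u)|\le K\delta^2(|u|^4+|u|^6)\le K\delta^2(1+|u|^q)$ with $q=6$, settling the case $j=0$.

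For $j\ge 1$, the function $H$ extends to a holomorphic function on the complex disk $|u|<1/(2\alpha_n)$ (the characteristic function factors as $e^{-\ii u\theta/\sqrt{b_n}}\prod_\ell(1-2\ii u\lambda_\ell/\sqrt{b_n})^{-1/2}$ and $e^{-\bar\lambda_{2,n}u^2/2}(1+T_1)$ is entire), and the pointwise estimate $|H(z)|\le K\delta^2(1+|z|^6)$ extends to complex $z$ with $|z|\le 2C\delta^{-1/3}$ via the very same geometric arguments. Choosing the constant $C$ small enough so that $2C\delta^{-1/3}+1<1/(2\alpha_n)$ on $\calp_n(\delta)$ for $\delta$ small, Cauchy's inequality on a disk of radius $\rho=1$ around each real $u\in[-C\delta^{-1/3},C\delta^{-1/3}]$ gives $|H^{(j)}(u)|\le j!\,K'\delta^2(1+|u|^6)$. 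Applying Leibniz's rule to the $j$-th derivative of $e^{-\bar\lambda_{2,n}u^2/2}H$, and using that $(e^{-\bar\lambda_{2,n}u^2/2})^{(k)}$ equals a polynomial of degree $k$ in $u$ (with coefficients depending on $\bar\lambda_{2,n}\le\mfc_1$) times the Gaussian, produces the identity claimed in the proposition with $q$ replaced by $q+j$.

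The main obstacle is that, over the range $|u|\le C\delta^{-1/3}$, the cubic term $T_1(u)$ is only bounded and not vanishing, so a naive termwise Taylor expansion of $e^R$ would fail: individual powers $T_1^k$ are of order $1$. The decisive observation is that every factor of $R$ carries an $\alpha_n$, so $|R|^2$ already produces the desired $\alpha_n^2\le\delta^2$ gain in a single step, and the remaining higher-order corrections are then safely absorbed into the polynomial factor $(1+|u|^q)$. Verifying the uniformity of all constants over $\Pi_n\in\calp_n(\delta)$, and ensuring that the complex analyticity disk leaves sufficient room for the Cauchy estimate, constitutes the bulk of the bookkeeping.
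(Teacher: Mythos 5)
Your proof is correct, and its core coincides with the paper's: the same splitting of $\log\Ex^\Pi[e^{\ii u\calx_n}]$ into the Gaussian term, the cubic term $a_{1,n}(u)=({\ii}u)^3b_n^{1/2}\bar\lambda_{3,n}/6$ and a tail $\sfr_n(u)$, the same reliance on Lemma~\ref{071217-1} (i.e.\ the bound $|\mu_{r,n}|\le(\alpha_n\sqrt{b_n})^{r-2}\mu_{2,n}$) to get $|\sfr_n|=\mO(\delta^2)(1+|u|^4)$ and $|a_{1,n}|=\mO(\delta)(1+|u|^3)$ on $\calp_n(\delta)$ for $|u|\le C\delta^{-1/3}$, and the same second-order Taylor remainder for $e^z-1-z$ (the paper writes it as a double integral, you use the elementary bound $|e^z-1-z|\le|z|^2e^{|z|}/2$; these are interchangeable). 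The one genuine divergence is the treatment of $j\ge1$: the paper differentiates the explicit remainder formula directly, using that the series defining $\sfr_n$ and the polynomial $a_{1,n}$ admit term-by-term derivative bounds of the same order ($\mO(\delta^2)$ and $\mO(\delta)$ respectively), and then applies Leibniz; you instead observe that the remainder is holomorphic on the disk $|z|<1/(2\alpha_n)$, extend the pointwise bound to complex arguments, and invoke Cauchy's inequality on unit disks. Both are sound; the Cauchy route saves you from re-deriving derivative estimates for the tail series at the cost of checking that the analyticity disk leaves room for the contour (which your condition $2C\delta^{2/3}<1$ handles), while the paper's real-variable route is self-contained and makes the polynomial degree $q$ traceable. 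Either way the conclusion and the uniformity over $\Pi_n\in\calp_n(\delta)$ come out the same.
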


\begin{proof}
Let us define $a_0(u)=-{\bar\lambda_{2,n}}u^2/2$,
$a_{1,n}(u)=\frac{({\ii}u)^3b_n^{1/2}\bar\lambda_{3,n}}{6}$
and $\sfr_n(u)=\sum_{k=4}^\infty \frac{(2{\ii}u)^k\mu_{k,n}}{2kb_n^{k/2}}$. 
Using (\ref{logfi}) and the fact that in our setup $\max_\ell
|\lambda_\ell|$ is bounded by $\sqrt{3}\sigma^2r_n$, we get
\begin{align*}
\Ex^\Pi[e^{{\ii}u\calx_n}]
&=\exp\bigg\{\sum_{k=2}^\infty \frac{(2{\ii}u)^k\mu_{k,n}}{2kb_n^{k/2}}\bigg\}=
\exp\bigg\{-\frac{\bar\lambda_{2,n}u^2}2+\frac{({\ii}u)^3b_n^{1/2}\bar\lambda_{3,n}}{6}+\sfr_n(u)\bigg\}
\end{align*}
for every $u\in\bbR$ such that $|u|< 1/(2\delta)$.  One easily checks that
\begin{align}\label{a0a1}
\Ex^\Pi[e^{{\ii}u\calx_n}]
-e^{a_0(u)}(1+a_{1,n}(u))&=e^{a_0(u)}(a_{1,n}(u)+\sfr_n(u))^2\int_0^1\int_0^1
ve^{tv(a_{1,n}(u)+\sfr_n(u))}\,dtdv\nonumber\\
&\qquad+\sfr_n(u)e^{a_0(u)}.
\end{align}
Inequalities (\ref{boundmuk})
imply that there exists some constant $C>0$ such that for every $\ell\le j$ and for every $\Pi_n\in
\calp_n(\delta)$, it holds that
$$
\bigg|\frac{d^\ell \sfr_n(u)}{du^\ell}\bigg|\le
C\frac{(1+u^{4})\alpha_n^2\mu_{2,n}}{b_n}\le C_1(1+u^4)\delta^2,
$$
as soon as $|u|\le 1/(4\alpha_n)$. Similarly, for every $\ell\in\mathbb N$,
$$
\Big|\frac{d^\ell}{du^\ell}\, a_{1,n}(u)\Big|\le C_2(1+|u|^3)\frac{\alpha_n\mu_{2,n}}
{3b_n}\le C_2(1+|u|^3)\delta,
\quad \text{if}\ \Pi_n\in
\calp_n.
$$
These inequalities in conjunction with Eq.\ (\ref{a0a1}) yield the estimate
\begin{align*}
\frac{d^j}{du^j}\Big(\Ex^\Pi[e^{{\ii}u\calx_n}]-e^{a_0(u)}(1+a_{1,n}(u))\Big)=
\mO(\delta^2)(1+|u|^q)e^{a_0(u)}.
\end{align*}
This completes the proof of the proposition
\end{proof}
\begin{remark} As usual in asymptotic expansions, the coefficient of the second order term (i.e., the coefficient of $({\ii}u)^3b_n^{1/2}$)
in the obtained decomposition is given by the normalized third cumulant divided by $6$. It also admits the following representations:
\begin{align*} \frac{b_n^{1/2}}{6}\bar{\lambda}_{3,n}
=\frac{1}{6}\kappa_3[\calx_n]
=\frac{4\mu_{3,n}}{3b_n\sqrt{b_n}}
\end{align*}
where $\mu_{3,n}$ is defined by (\ref{mu321}).
\end{remark}
}

\subsection{Proof of Theorem \ref{191214-1}} \label{SSec5}
Let us start by proving relation (\ref{thm1a}).
Let $\sfh(x)=1+|x|^{\gamma}$.
Let $\calk$ be a probability density on $\bbR$ such that the Fourier transform $\hat{\calk}$ of $\calk$ is compactly supported, $\int_\bbR |x|^{\gamma+2}\calk(x)\,dx<\infty$  and 
$\int_{-1}^1\calk(x)\,dx\ge 2/3$. Let $K>0$.
For $\ep>0$, define the measure $\calk_\ep$ by $\calk_\ep(x)=\calk(\ep^{-1}x)$ for all $x\in\bbR$. 
Using the modified version of the Sweeting lemma~\cite{Swe77} stated in Babu and Singh \cite[Lemma 1]{Babu}, we get:
\bea\label{191113-1}
|\Ex^\Pi[f(\calx_n)]-\Psi^\Pi_n[f]|
&\leq&
9^\gamma M (P^{\calx_n|\Pi}+|\Psi^\Pi_n|)[\sfh]\,\big( \bbA_0+\bbA_1+\bbA_2\big)
+\bbA_3,
\eea
where 
\begin{align*}
\bbA_0 &= \int_{\bbR}\sfh(x)\big|\calk_{b_n^{K}}*(P^{\calx_n|\Pi}-\Psi^\Pi_n)\big|(dx),\quad
\bbA_1 = b_n^{K} \int_{\bbR} |x|^{\gamma+2}\calk(x)\,dx,\quad\bbA_2 = 2^{1-b_n^{-K/4}}\\
\bbA_3 &=\sup_{|x|\le b_n^{K}}\int_\bbR \omega_f(x-y,2b_n^{K})|\Psi^\Pi_n|(dy).
\end{align*}
As we already mentioned, the Rosenthal inequality
yields that $P^{\calx_n|\Pi}[\sfh]=1+\Ex^\Pi[|\calx_n|^{\gamma}]$ is bounded uniformly in $n$.
Furthermore, it is obvious that the term $|\Psi_n^\Pi|[\sfh]$ is bounded uniformly in $n$.

If $n$ is sufficiently large, $[x-y-2b_n^{K},x-y+2b_n^{K}]\subset [-y-3b_n^{K/4},-y+3b_n^{K/4}]$ and therefore
\begin{align*}
\bbA_3 &\leq 2\int_\bbR\omega_f(-y,3b_n^{K/4})|\Psi^\Pi_n|(dy)
\le{\sf C}^\circ\int_\bbR\omega_f(y,3b_n^{K/4})\phi(y;\mfc^*)\,dy\le {\sf C}b_n^{K\eta/4}.
\end{align*}
On the other hand, $\bbA_0$ admits the estimate
\beas
\bbA_0\leq
\sum_{\alpha=0}^{2+\gamma}
\int_\bbR \Bigl| \partial_u^\alpha
\Big[\l(\varphi_{\calx_n}^\Pi(u)-\hat{\Psi}^\Pi_n(u)\r)
\hat{\calk}(b_n^{K}u)\Big]\Bigr|\>du,
\eeas
where
$\varphi_{\calx_n}^\Pi(u)=\Ex^\Pi[e^{{\ii}u\calx_n}]$. Let $\delta_n=b_n^{a-1/2}$.
By virtue of Proposition \ref{prop4}
and Lemma~\ref{lemm3},
we have
\begin{align*}
\int_\bbR \Bigl| \partial_u^\alpha
\Big[\Big(\varphi_{\calx_n}^\Pi(u)&-\hat{\Psi}^\Pi_n(u)\Big)
\hat{\calk}(b_n^{K}u)\Big]\Bigr|\>du\\
&\leq\int_{u:|u|\leq C\delta_n^{-1/3}} \Big| \partial_u^\alpha
\Big[\Big(\varphi_{\calx_n}^\Pi(u)-\hat{\Psi}^\Pi_n(u)\Big)
\hat{\calk}(b_n^{K}u)\Big]\Big|\>du\\
&\qquad+\int_{u:|u|>C\delta_n^{-1/3}}\Bigl| \partial_u^\alpha
\Big[\l(\varphi_{\calx_n}^\Pi(u)-\hat{\Psi}^\Pi_n(u)\r)
\hat{\calk}(b_n^{K}u)\Big]\Bigr|\>du\\
&\leq\int_{u:|u|\leq C\delta_n^{-1/3}}
\mO(\delta_n^2)(1+|u|^q)e^{-{\mu_{2,n}u^2}/2}\,du+
\int_{u:|u|>C\delta_n^{-1/3}} \frac{C_2}{1+|u|^L}\>du\\
&\qquad+\sum_{\alpha'=0}^{2+\gamma}
\int_{u:|u|>C\delta_n^{-1/3}}
|\partial_u^{\alpha'}\hat{\Psi}^\Pi_n(u)|\>du\\
&\leq C_3 [ \mO(\delta_n^2) +\delta_n^{(L-1)/3} ]\leq C_4 \delta_n^2,
\end{align*}
where $L$ can be chosen as large as we need, therefore
$\bbA_0\leq C_5 \delta_n^2$. Combining all these estimates, we get
$$
|\Ex^\Pi[f(\calx_n)]-\Psi^\Pi_n[f]|\le {\sf C}\big(b_n^{2a-1}+b_n^K+2^{-b_n^{-K/4}}+b_n^{K\eta/4}\big).
$$
Choosing $K>\max(2a-1,4(2a-1)/\eta)$, we get the relation stated in (\ref{thm1a}).

To prove (\ref{thm1b}), we notice that $|\bar{\lambda}_{3,n}(\bar{\lambda}_{2,n}-\mfc)|= O(b_n^{-1}r_n\times b_n^{a-1/2})
=O(b_n^{2a-\frac{3}{2}})=o(1)$ uniformly on the event $A_n$.
Expanding $\phi(z;\bar{\lambda}_{2,n})$ in $\Psi^\Pi_n$
around $\mfc$ we get the desired result.

\section{Poisson sampling scheme}\label{Sec4}

As an application of previous results let us consider the case when
the partitions $\Pi^1_n$ and $\Pi^2_n$ are generated by Poisson
point processes. Let $\mP^{i,n}=(\mP^{i,n}_{t},t\ge 0)$, $i=1,2$, be
two independent homogeneous Poisson processes with intensities
$np_i$, $i=1,2$. Moreover, assume that these processes are
independent of $\mB$. Let the sampling times
$S^1,\ldots,S^{N_1}$ and $T^1,\ldots,T^{N_2}$ be the time instants corresponding to the
jumps of $\mP^{1,n}$ and $\mP^{2,n}$ occurred before the instant $T$. Note that
$S^i$s and $T^j$s depend also on $n$. However, for simplicity of exposition
this dependence will not be reflected in our notation.

Prior to stating the main result of this section, let us recall several notation. 
We denote by $h(t)$ the function
$\sigma_{1,t}\sigma_{2,t}\rho_t$ and by $x_+$ the positive part of a real $x$. Finally, we write  $g_1(z)\propto g_2(z)$
if for some $C_g\in\bbR$ the equality $g_1(z)=C_g g_2(z)$ holds for every $z$.

\begin{theorem}\label{expPois}
Let the sampling scheme be generated by two independent Poisson processes with intensities $np_1$ and
$np_2$, independent of the driving BM $\mB$. If the functions $\sigma_{1}$, $\sigma_{2}$ and $\rho$ are Lipschitz
continuous then, for every $a\in(\frac34,1)$, it holds that
\begin{equation}\label{ae4}
\sup_{f\in\cale(M,\gamma)\cap\cale^0({\sf C},\eta,r_0,\mfc^*)} \l| \>\Ex[f(\sqrt{n}(\hat\theta_n-\theta))]-\int_\bbR f(z)\,p_n^\circ(z)\,dz\>\r|=O(n^{1-2a}),
\end{equation}
where
$$
p_n^\circ(z)\propto \frac1{\sqrt{2\pi \mfc}}\Bigl[1
+\frac{2\kappa(z^3-3\mfc z)}{\sqrt{n}\,\mfc^3} \> \Bigr]_+e^{-z^2/(2\mfc)}
$$ is a probability density
with
\begin{align*}
\mfc&=\bigg(\frac2{p_1}+\frac{2}{p_2}\bigg)\int_0^T\sigma_{1,t}^2\sigma_{2,t}^2
(1+\rho_t^2)dt-\frac{2}{p_1+p_2}\int_0^T (\sigma_{1,t}\sigma_{2,t}\rho_t)^2dt,\\
\kappa&=\bigg(\frac{1}{p_1^2}+\frac{1}{p_2^2}\bigg) \int_0^T h(t)^3\,dt
+\frac{3p_1^2+2p_1p_2+3p_2^2}
{p_1^2p_2^2} \int_0^T \sigma_{1,t}^2\sigma_{2,t}^2h(t)\, dt.
\end{align*}
\end{theorem}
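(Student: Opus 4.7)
The plan is to apply Theorem \ref{ThmAE3} with $b_n = 1/n$. Since the Poisson processes $\mP^{1,n}, \mP^{2,n}$ are independent of the driving Brownian motion, the Gaussian framework of Section \ref{asy.exp.dist} applies directly upon conditioning on the sampling scheme. Writing $\calx_n = \sqrt{n}(\hat\theta_n - \theta)$, it remains to verify the three hypotheses of Theorem \ref{ThmAE3}: (i) $\Pb(A_n(a)^c) = o(n^{-p})$ for every $p > 1$; (ii) $|\Ex[\bar{\lambda}_{2,n}] - \mfc| = O(n^{1-2a})$; and (iii) $\sup_n \Ex[\bar{\lambda}_{3,n}] < \infty$ together with $\Ex[\bar{\lambda}_{3,n}] \to 12\kappa$ at a rate fast enough that the density $p_n^\ast$ appearing in Theorem~\ref{ThmAE3} can be replaced by the truncated $p_n^\circ$ up to an error of order $n^{1-2a}$.

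For (ii) and (iii), the strategy is to rewrite each sum in (\ref{mu221})--(\ref{mu321}) as a multiple integral over $[0,T]$ and then take expectations using classical identities for intervals of two independent Poisson processes. Denoting by $I(s)$ (resp.\ $J(s)$) the interval of $\Pi^1_n$ (resp.\ $\Pi^2_n$) containing $s$, the key probabilities are
\begin{align*}
\Pb(I(s)=I(t)) &= e^{-np_1|s-t|}, \quad \Pb(J(s)=J(t)) = e^{-np_2|s-t|},\\
\Pb(I(s)=I(t),\ J(s)=J(t)) &= e^{-n(p_1+p_2)|s-t|},\\
\Pb(I(s)\cap J(t)\neq\emptyset) &= \frac{p_1 e^{-np_2|s-t|} - p_2 e^{-np_1|s-t|}}{p_1-p_2},
\end{align*}
the last one obtained by decomposing the superposition $\Pi^1_n \cup \Pi^2_n$ into type-1 and type-2 points and summing over admissible orderings. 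Each such kernel is $n$ times a probability density concentrating in an $O(1/n)$ neighborhood of the diagonal $\{s=t\}$, so Lipschitz continuity of $\sigma_1,\sigma_2,\rho$ yields $\Ex[\bar{\lambda}_{2,n}] = 2n\,\Ex[\mu_{2,n}] = \mfc + O(1/n)$, which gives (ii) since $O(1/n)\subset O(n^{1-2a})$ for $a\le 1$. The analogous, more tedious computation for $\Ex[\mu_{3,n}]$ — whose terms in (\ref{mu321}) correspond to chains of length three between the two partitions and produce nested exponential kernels — identifies the constant $\kappa$ and yields (iii) with a convergence rate of order $n^{-1}$, comfortably better than the required $n^{3/2-2a}$.

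For (i), the bound $\Pb(r_n > n^{-a}) \le C n\,e^{-\min(p_1,p_2)\,n^{1-a}}$ follows from a union bound over the $O(n)$ exponential spacings and decays faster than every polynomial since $a<1$. The concentration estimate $\Pb((\bar{\lambda}_{2,n}-\mfc)^2 > n^{1-2a})$ is harder: as the bias $|\Ex[\bar{\lambda}_{2,n}]-\mfc|$ is already $O(1/n)$ by the previous paragraph, it suffices to control $\Pb(|\bar{\lambda}_{2,n}-\Ex\bar{\lambda}_{2,n}| > c\,n^{(1-2a)/2})$. I would bound the $(2k)$-th centered moments of $\bar{\lambda}_{2,n}=2n\mu_{2,n}$ by exploiting that, viewed as a finite-degree polynomial functional of the two Poisson point measures, it admits a Wiener--Poisson chaos decomposition of bounded order; direct computation of joint moments of interval statistics (or equivalently of the Malliavin difference operators) yields $\Ex[(\bar{\lambda}_{2,n}-\Ex\bar{\lambda}_{2,n})^{2k}] = O(n^{-k})$ for every $k$. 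Markov's inequality then gives a bound of order $n^{k(2a-2)}$, which is $o(n^{-p})$ for $k$ large, completing (i).

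The main obstacle is the combinatorial bookkeeping for the expected value of $\mu_{3,n}$, which involves enumerating all triple-intersection configurations between intervals of the two Poisson partitions (chains of lengths one, two and three as in Section~\ref{SSec3}) and verifying that the various Poisson kernels integrate to the clean constant $\kappa$ displayed in the statement. A secondary obstacle is keeping the combinatorial constants in the Poisson moment bounds of (i) under control as the moment order $2k$ grows, since the required polynomial decay forces $k$ to be taken arbitrarily large.
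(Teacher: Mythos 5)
Your skeleton is the same as the paper's: take $b_n=1/n$, reduce everything to Theorem~\ref{ThmAE3}, and verify (i) $\Pb(A_n(a)^c)=o(n^{-p})$, (ii) the bias bound for $\bar\lambda_{2,n}$, (iii) the limit of $\Ex[\bar\lambda_{3,n}]$ (your value $12\kappa$ matches the paper's $\Ex[\mu_{3,n}]=\tfrac32\kappa n^{-2}+\mO(n^{-3}\log^3 n)$ after the normalization $\bar\lambda_{3,n}=8n^2\mu_{3,n}$, and your rate bookkeeping for replacing $\Ex[\bar\lambda_{3,n}]$ by $12\kappa$ and passing to the positive part is right). Where you genuinely diverge is in how the hypotheses are verified. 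For the expectations you rewrite the sums in (\ref{mu221})--(\ref{mu321}) as multiple integrals against exact exponential ``diagonal kernels'' such as $\Pb(I(s)=I(t))=e^{-np_1|s-t|}$ and $\Pb(I(s)\cap J(t)\neq\emptyset)=\frac{p_1e^{-np_2|s-t|}-p_2e^{-np_1|s-t|}}{p_1-p_2}$ (correct for $p_1\neq p_2$; the $p_1=p_2$ case needs the limiting kernel $(1+np|s-t|)e^{-np|s-t|}$), whereas the paper (Propositions~\ref{propo5} and~\ref{prop6}) works interval by interval, using the renewal representation, Wald's identity, optional stopping and the auxiliary Lemmas~\ref{lemA2}--\ref{lemA4}. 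Your kernel route is arguably cleaner for $\mu_{2,n}$ and gives a slightly better bias ($O(1/n)$ versus $O(n^{-1}\log^3 n)$), at the cost of heavier bookkeeping for the triple integrals in $\mu_{3,n}$, in particular for the term $\sum_{I,J}v_1(I)v_2(J)v(I\cup J)\KIJ$ whose kernel is genuinely three-point; the paper's conditional-expectation approach handles these terms one configuration at a time. For the deviation bound on $\bar\lambda_{2,n}$ the paper uses a blocking argument (conditionally independent block statistics plus Bernstein's inequality), which yields an exponential tail; your high-moment/Markov route only needs polynomial decay and would also suffice. The tail bound on $r_n$ is essentially identical to Lemma~\ref{lem2}.

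One justification in your concentration step does not hold as stated: $\mu_{2,n}$ is \emph{not} a Poisson polynomial functional of bounded chaos order, because the building blocks are avoidance indicators of the form $\1(\mathcal P\cap(s,t]=\emptyset)$ and intersection indicators $\KIJ$, whose chaos expansions have unbounded degree. So the bound $\Ex[(\bar\lambda_{2,n}-\Ex\bar\lambda_{2,n})^{2k}]=O(n^{-k})$ cannot be obtained by simply invoking a finite chaos decomposition. The bound itself is true (up to logarithmic factors, which are harmless for your Markov argument) and can be proved by the same device the paper uses in Proposition~\ref{propo5}: cut $[0,T]$ into $O(n/x)$ blocks, condition on every block containing points of both processes so that alternate block statistics become independent, and then apply either moment bounds or Bernstein's inequality to the resulting sum of bounded independent terms. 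With that repair, and with the $\Ex[\mu_{3,n}]$ computation actually carried out, your argument closes.
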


Before proceeding with the proof of this theorem, let us note that it extends the asymptotic normality result proved 
in Hayashi and Yoshida~\cite{Hay-Yos04}, providing the second-order
term in the asymptotic expansion of the distribution of $\hat\theta_n$. Note however that the price to pay for getting this
expansion is a slightly stronger assumption on the functions
$\sigma_{1}$, $\sigma_{2}$ and $\rho$. Indeed, we assume in Theorem~\ref{expPois} that these functions are Lipschitz, while 
in \cite{Hay-Yos04} only the continuity of these functions was required.

Remark also that the constant of proportionality in the definition of $p_n^\circ$ can be replaced by one. 
Indeed, $p_n^\circ(z)$ is the positive part of the function
\begin{equation}\label{func}
z\mapsto\frac1{\sqrt{2\pi \mfc}}\Bigl[1+\frac{2\kappa(z^3-3\mfc z)}{\sqrt{n}\,\mfc^3} \> \Bigr]e^{-z^2/(2\mfc)},
\end{equation}
whose integral over $\bbR$ is equal to one. Moreover, for some $c>0$, the function (\ref{func}) is positive on 
the interval $[-cn^{1/6},cn^{1/6}]$ and its absolute value is bounded by an exponentially decreasing function 
outside the interval $[-cn^{1/6},cn^{1/6}]$. This implies that the proportionality constant in the definition
of $p_n^\circ$ is $1+O(e^{-n^{1/3}/(4\mfc)})$ and, consequently, its exact value is unimportant.

\begin{proof}[Proof of Theorem~\ref{expPois}]
We want to apply Theorem~\ref{ThmAE3}. To this end, we have to accomplish the following tasks: 
\begin{itemize}
\item[{[T1]}] prove that $\bar\lambda_{2,n}=2n\mu_{2,n}$ is very
close to $\mfc$ in expectation and in probability,
\item[{[T2]}] check that the maximal sampling step $r_n$ is smaller than $b_n^a$ with high probability,
\item[{[T3]}] determine the asymptotic behavior of  $\E[\bar\lambda_{3,n}]$,
\end{itemize}  
with $b_n=1/n$ and some $a<1$. In fact, we will show that any $a<1$ can be used.

Concerning the task [T1], it is proved in \cite{Hay-Yos04} that $2n\mu_{2,n}$
converges in probability to $\mfc$. In the present work, we need
a result providing the rate of convergence of $2n\mu_{2,n}$ to $\mfc$.
It is done in the following

\begin{proposition}\label{propo5}
If the functions $\sigma_{1}$, $\sigma_{2}$ and $\rho$ are Lipschitz
continuous, then there exists a constant $C>2$ depending only on
$p_1$ and $p_2$ such that, for every $x>C\log n$ and for every $n\ge
2$, it holds that
\begin{equation}\label{exp_in_mu}
\Pb\bigg(|2n\mu_{2,n}-\mfc|>\frac{C\log^3 n}{n}+
\frac{x}{\sqrt{n}}\bigg)\le Cn e^{-x/C}.
\end{equation}
Furthermore, $\Ex[2n\mu_{2,n}]=\mfc+\mO(n^{-1}\log^3 n)$ as $n$ goes to infinity.
\end{proposition}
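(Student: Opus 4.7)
The plan has three ingredients: a high-probability control of the maximal sampling lag $r_n$, an exact expectation computation for $\mu_{2,n}$, and a block-based Bernstein-type concentration inequality. For the first ingredient, since the interarrival times of the Poisson processes with intensities $np_i$ are i.i.d.\ exponential with mean $(np_i)^{-1}$, a union bound over the $O(n)$ intervals yields $\Pb(r_n > y) \leq C n \exp(-n p_{\min} y/2)$ for $y \geq (\log n)/n$. In particular, $\Pb(r_n > C(\log n)/n) \leq n^{-K}$ for any prescribed $K$ by choosing $C$ large enough, and for every $x > C \log n$ one has $\Pb(r_n > Cx/n) \leq C n\, e^{-x/C}$.

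For the second ingredient, using \eqref{mu221} I would write $2n\mu_{2,n} = \Sigma_1 + \Sigma_2 + \Sigma_3 - \Sigma_4$ with the four pieces corresponding to the four sums displayed there. For each $I = I^i$ and $J = J^j$ of length bounded by $r_n$, the Lipschitz continuity of $\sigma_1,\sigma_2,\rho$ yields, for an arbitrary reference point $t^\star\in I$,
\begin{equation*}
v(I) = h(t^\star)\,|I| + O(|I|^2), \qquad v_j(I) = \sigma_j^2(t^\star)\,|I| + O(|I|^2),
\end{equation*}
and an analogous estimate holds on $I\cap J$. Taking expectations and exploiting the independence of $\mP^{1,n}$ and $\mP^{2,n}$ together with the Mecke--Palm formula for Poisson processes (or equivalently, direct moment computations on the exponential interarrivals), each summand is evaluated explicitly. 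The moments of the exponentials produce the factors $1/p_1$, $1/p_2$ and $1/(p_1+p_2)$ that appear in $\mfc$. Matching and summing the four contributions yields $\Ex[2n\mu_{2,n}] = \mfc + O((\log n)^3/n)$, the three logarithmic factors arising from (i) truncating to the event $G_n = \{r_n \leq C(\log n)/n\}$, (ii) the Lipschitz remainder of order $r_n$ entering each squared increment, and (iii) a boundary correction near $0$ and $T$.

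For the third ingredient I would partition $[0,T]$ into $O(n/\log n)$ consecutive blocks $B_k$ of length $C(\log n)/n$. On $G_n$ every sampling interval is contained in at most two consecutive blocks, so $2n\mu_{2,n}$ can be written $\sum_k Y_k$ where $Y_k$ depends only on the Poisson points falling in $B_k \cup B_{k+1} \cup B_{k+2}$. The family $(Y_k)$ is thus $m$-dependent for some fixed integer $m$ and splits into $O(1)$ groups of mutually independent variables. On $G_n$ one has $|Y_k| \leq C(\log n)^3/n^2$ and $\sum_k \var(Y_k) \leq C/n$, so Bernstein's inequality applied within each group gives
\begin{equation*}
\Pb\Big(\{|2n\mu_{2,n} - \Ex[2n\mu_{2,n}\,\1_{G_n}]| > t\}\cap G_n\Big) \leq 2\exp\Big(-\frac{c\,t^2}{n^{-1} + t(\log n)^3/n^2}\Big).
\end{equation*}
Plugging $t = x/\sqrt{n}$ with $x > C \log n$ makes the exponent dominated by $-x/C$, and combining with the first step to handle $G_n^c$ produces the announced tail inequality; the expectation bound follows from the Step~2 computation together with an $L^1$ estimate extracted from the concentration bound.

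The main obstacle is the bias analysis of Step~2: the simultaneous dependence of the four sums $\Sigma_k$ on both Poisson processes through the indicator $K_{IJ}$ makes the exact computation delicate, and verifying that the four contributions combine to yield precisely the explicit $\mfc$ of Theorem~\ref{expPois}---as opposed to some other explicit expression---requires careful bookkeeping of cross-process integrals. By contrast, Step~3 is structurally standard once the block decomposition and its $m$-dependence have been identified.
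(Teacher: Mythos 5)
Your proposal follows essentially the same route as the paper's proof: an exponential tail bound on $r_n$, explicit moment computations on the exponential interarrival times (done in the paper via Wald's identity, optional stopping and the appendix lemmas on Poisson partitions) for the bias term, and a blocking-plus-Bernstein argument for the concentration, with the two/three groups of alternating blocks playing the role of your $m$-dependent decomposition. The one place where the paper is more careful is the choice of the good event: rather than $\{r_n\le C\log n/n\}$ it conditions on the product event that every small sub-interval contains a point of each process, which is what makes the block variables genuinely independent after conditioning (intersecting with your $G_n$ as written does not preserve independence and requires an extra truncation step); also, on the good event one only gets $|Y_k|\le C(\log n)^2/n$ rather than your $C(\log n)^3/n^2$, although the Bernstein exponent still closes with the corrected bound together with your (correct) variance estimate $\sum_k\var(Y_k)\le C/n$.
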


The proof of this proposition is deferred to Section~\ref{App0}.
 
The task [T2], consisting in bounding the probability of the event $r_n>b_n^a=n^{-a}$ is done using the following lemma.
\begin{lemma}\label{lem2}
There exists a constant $C$ depending only on $p_1$
and $p_2$ such that, for every $x>0$, the inequality $\Pb(nr_n> x)\le Cn
e^{-x/C}$ holds.
\end{lemma}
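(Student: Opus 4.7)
The plan is to reduce the claim to a standard exponential tail estimate for the maximum inter-arrival gap of a single Poisson process via a union bound, and then obtain that estimate by a coarse grid-covering argument.

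First, denote by $M_n^{(i)}$, $i=1,2$, the length of the largest inter-arrival interval of the Poisson process $\mP^{i,n}$ on $[0,T]$ (taking the left and right boundaries $0$ and $T$ into account, as in the definition of $\Pi_n^i$). Since $r_n=M_n^{(1)}\vee M_n^{(2)}$, the union bound gives $\Pb(nr_n>x)\le \Pb(nM_n^{(1)}>x)+\Pb(nM_n^{(2)}>x)$, so it is enough to control each summand separately with the same kind of bound.

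Second, I would fix $i\in\{1,2\}$, set $y=x/(2n)$ and cover $[0,T]$ by the deterministic intervals $G_k=[ky,(k+1)y]$ for $k=0,1,\ldots,K$ with $K=\lfloor T/y\rfloor$, so that $K+1\le 2nT/x+1$. The key elementary observation is that any \emph{open} subinterval of $[0,T]$ of length strictly greater than $2y$ necessarily contains one of the intervals $G_k$; hence, if $M_n^{(i)}>x/n=2y$, then at least one of the $G_k$ contains no point of the Poisson process $\mP^{i,n}$. Applying the union bound together with the Poisson mass formula $\Pb(\mP^{i,n}(G_k)=0)=e^{-np_i y}=e^{-p_i x/2}$ gives
\begin{equation*}
\Pb\bigl(nM_n^{(i)}>x\bigr)\le (K+1)\,e^{-p_i x/2}\le \Bigl(\frac{2nT}{x}+1\Bigr)e^{-p_i x/2}.
\end{equation*}

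Third, I would combine the two estimates and absorb constants: for $x\ge 1$ we have $2nT/x+1\le (2T+1)n$, and hence
\begin{equation*}
\Pb(nr_n>x)\le 2(2T+1)n\,e^{-(p_1\wedge p_2)x/2},
\end{equation*}
which gives the announced bound with $C=\max\bigl(2(2T+1),\,2/(p_1\wedge p_2)\bigr)$. For $x<1$ the stated inequality is trivial because $Cne^{-x/C}\ge C\ge 1$ for any $n\ge 1$.

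The whole argument is essentially routine; no step is an obstacle. The only point that deserves attention is the geometric lemma used in the second step — namely, that an open interval of length $>2y$ contains a full closed cell $G_k$ of the grid — which is the reason one must take a grid of mesh $y=x/(2n)$ rather than $x/n$. Everything else reduces to the union bound and the Poisson void probability.
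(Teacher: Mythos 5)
Your argument is correct, but it is not the one the paper uses. The paper's proof also reduces to the maximal gap of each Poisson process separately, but then proceeds by an exponential Chernoff bound: it writes $\Pb(\max_{I\in\Pi^1_n}n|I|>x)\le e^{-ux}\,\Ex\bigl[\sum_{I\in\Pi^1_n}e^{un|I|}\bigr]$, dominates the interval lengths by the i.i.d.\ exponential inter-arrival times, and evaluates the expected sum via Wald's identity, which gives $np_1T\,\Ex[e^{u\zeta/p_1}]$ for any $u<p_1$; this yields the factor $n$ in front and the exponential decay $e^{-ux}$. You instead use the classical grid-covering argument: a deterministic partition of $[0,T]$ into cells of mesh $x/(2n)$, the observation that a gap of length $>x/n$ must swallow a whole cell, and the Poisson void probability $e^{-p_ix/2}$ together with a union bound over the roughly $2nT/x$ cells. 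Both routes are sound and both produce the stated form $Cne^{-x/C}$ (with $C$ implicitly depending on $T$ as well, in the paper's proof no less than in yours). Your approach is more elementary --- it needs only the void probability of a Poisson process, no moment generating functions and no optional-stopping/Wald step --- and you correctly flag the one delicate point, namely the factor $2$ in the mesh needed so that a long gap contains a full closed cell. The paper's approach buys a slightly better exponent (any rate $u<p_i$ rather than $p_i/2$) and extends more readily to non-exponential waiting times with finite exponential moments, but for the purposes of this lemma the two are interchangeable.
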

\begin{proof}
We start with bounding $\Pb(\max_{I\in\Pi^1_n} n|I|>x)$. According
to the Markov inequality, for every $u>0$,
$$
\Pb(\max_{I\in\Pi^1_n}n|I|>x)\le e^{-ux}\Ex\Big[\sum_{I\in\Pi^1_n} e^{un|I|}\Big].
$$
The last sum can be bounded by the sum of $N_1$ independent random
variables each of which has the same law as $e^{u\zeta/p_1}$, with
$\zeta$ being exponentially distributed with mean $1$. In view of the Wald
equation, this yields $\Ex\big[\sum_{I\in\Pi^1_n}
e^{un|I|}\big]=np_1T \Ex[e^{u\zeta/p_1}]$. Choosing $u$ smaller than
$p_1$ and repeating the same arguments for $\max_{J\in\Pi^2_n}
n|J|$, we obtain the desired result.
\end{proof}

Replacing $x$ by $n^{\frac12-a}$ in (\ref{exp_in_mu}) and by $n^{1-a}$ in Lemma~\ref{lem2}, 
we obtain that the probability of the event $A_n(a)^c$ is exponentially small as $n\to\infty$.
Therefore, $\P(A_n(a)^c)=o(b_n^p)=o(n^{-p})$ for every $p>0$. One also deduces from Proposition~\ref{propo5}
that $\E[\bar\lambda_{2,n}]-\mfc=o(n^{1-2a})$ as $n\to\infty$. Thus, it remains to accomplish the task [T3], 
which is done using the following 
proposition, the proof of this proposition is deferred to Section~\ref{App0}.
\begin{proposition}\label{prop6}
Under the assumptions of Theorem~\ref{expPois}, it holds that
$\Ex[\mu_{3,n}]=\frac32\kappa n^{-2}+\mO(\frac{\log^3 n}{n^3})$.
\end{proposition}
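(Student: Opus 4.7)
The plan is to compute $\Ex[\mu_{3,n}]$ by expanding the bracketed expression in (\ref{mu321}) as $4\mu_{3,n}=S_1+S_2+2S_3+3S_4-3S_5$ with
\begin{align*}
S_1&=\sum_{I\in\Pi^1}v(I)^3,\quad S_2=\sum_{J\in\Pi^2}v(J)^3,\quad S_3=\sum_{I,J}v(I\cap J)^3,\\
S_4&=\sum_{I,J}v_1(I)v_2(J)v(I\cup J)K_{IJ},\\
S_5&=\sum_{I,J}\bigl[v(I\cap J)^2(v(I)+v(J))-v(I\cap J)v(I)v(J)\bigr],
\end{align*}
and evaluating each $\Ex[S_k]$ separately. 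By the Lipschitz hypothesis on $\sigma_1,\sigma_2,\rho$, one has $v(K)=h(t_K^\ast)|K|+O(|K|^2)$ and analogously for $v_1,v_2$, with $t_K^\ast$ the left endpoint of $K$. Lemma~\ref{lem2} gives $\Pb(r_n>C\log n/n)=O(n^{1-p})$ for any $p>0$, so on the complementary event this ``frozen-coefficient'' approximation is uniform; taking expectations, the aggregated Lipschitz error in each $S_k$ is of order $\log^3 n/n^3$ or better, and is absorbed in the claimed remainder. It therefore suffices to compute the leading expectation of each $S_k$ with $v,v_1,v_2$ replaced by $h\cdot|\cdot|,\sigma_1^2\cdot|\cdot|,\sigma_2^2\cdot|\cdot|$.

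By Slivnyak's formula and the exponential law of the $\Pi^1_n$-interarrivals of rate $np_1$, one obtains $\Ex[S_1]=6(np_1)^{-2}\int_0^T h(t)^3\,dt+O(\log^3 n/n^3)$, and symmetrically for $\Ex[S_2]$. For $\Ex[S_3]$, the map $(I,J)\mapsto I\cap J$ restricted to intersecting pairs is a bijection onto the intervals of the superposition partition $\Pi^1_n\cup\Pi^2_n$, which is itself Poisson of rate $n(p_1+p_2)$; hence $\Ex[S_3]=6\bigl(n(p_1+p_2)\bigr)^{-2}\int_0^T h^3+O(\log^3 n/n^3)$.

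The same bijection parametrizes each summand of $S_4$ and $S_5$ by a superposition interval $K=[a,b]$, for which $|I(K)|=|K|+L_1^-+L_1^+$ and $|J(K)|=|K|+L_2^-+L_2^+$. By the memoryless property and the thinning of $\Pi^1_n\cup\Pi^2_n$, each endpoint of $K$ is independently of type $\Pi^i$ with probability $p_i/(p_1+p_2)$, and conditional on those types two of the four overhangs vanish while the remaining two are independent exponentials of rate $np_1$ or $np_2$ as dictated by the configuration. Expanding $v(I\cup J)=v(I)+v(J)-v(I\cap J)$ and summing over the four endpoint-type configurations, each expectation reduces to a one-dimensional integral weighted by the superposition intensity $n(p_1+p_2)$ of a polynomial in exponential moments. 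Routine algebra gives
\[
3\Ex[S_4]-3\Ex[S_5]=\frac{1}{n^2}\biggl[-\frac{12}{(p_1+p_2)^2}\int_0^T h^3+\frac{6(3p_1^2+2p_1p_2+3p_2^2)}{p_1^2p_2^2}\int_0^T \sigma_1^2\sigma_2^2 h\biggr]+O(\log^3 n/n^3).
\]
Summing the five pieces, the $\int h^3$ contributions collapse to the coefficient $6(1/p_1^2+1/p_2^2)$, so that $\Ex[4\mu_{3,n}]=6\kappa n^{-2}+O(\log^3 n/n^3)$, i.e.\ the claim.

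The main obstacle lies in the case analysis for $S_4$ and $S_5$: the four endpoint-type configurations of $K$ each produce a distinct joint law for $(|I|,|J|,|K|)$, and the resulting multilinear expressions in moments of exponentials must be assembled with the correct combinatorial weights so that the miraculous cancellations in the $\int h^3$ coefficient actually occur; simultaneously, the Lipschitz error must be propagated through products of up to three interval-length factors to preserve the target remainder $\log^3 n/n^3$.
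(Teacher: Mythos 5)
Your proof is correct and follows the same overall strategy as the paper: expand $4\mu_{3,n}$ via (\ref{mu321}) into the same elementary sums, freeze the coefficients using the Lipschitz hypothesis with the error controlled by moments of $r_n$, and compute the expectation of each sum for the Poisson scheme. Where you differ is in the machinery for those Poisson computations. The paper conditions on a given interval $I$ and invokes explicit formulae for $\Ex^I[\sum_J |I\cap J|^2]$ and $\Ex^I[\sum_J|J|\,|I\cap J|]$ (its Lemmas~\ref{lemA2}--\ref{lemA4}), then handles the random sums over $\Pi^1$ through the renewal representation, Wald's identity and optional stopping; you instead parametrize the cross terms by the superposition partition (Poisson of rate $n(p_1+p_2)$), use the marking/thinning theorem to assign endpoint types with probabilities $p_i/(p_1+p_2)$, and reduce everything to moments of independent exponential overhangs. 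Both routes are legitimate; yours arguably organizes $S_4$ and $S_5$ more transparently, and I checked that your value $3\Ex[S_4]-3\Ex[S_5]=n^{-2}\bigl[-12(p_1+p_2)^{-2}\int h^3+6(3p_1^2+2p_1p_2+3p_2^2)p_1^{-2}p_2^{-2}\int\sigma_1^2\sigma_2^2h\bigr]+O(\log^3 n/n^3)$ is the one that makes the $\int h^3$ contributions collapse to $6(p_1^{-2}+p_2^{-2})$ as required. One remark: your constants implicitly correct a misprint in the paper's list of intermediate relations, whose displayed values for $\Ex[\sum_{I,J}v(I\cap J)^2v(I)]$ and $\Ex[\sum_{I,J}v(I\cap J)^2v(J)]$ carry a spurious factor $3$ (they should read $(6p_1+4p_2)/(n^2p_1(p_1+p_2)^2)$ and its symmetric counterpart, as follows from Lemma~\ref{lemA3}); with the printed constants the cancellation would fail, whereas with yours the final identity $\Ex[\mu_{3,n}]=\tfrac32\kappa n^{-2}+\mO(\log^3 n/n^3)$ comes out exactly.
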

Combining these results, we get the assertion of Theorem~\ref{expPois}.
\end{proof}

\section{Stochastic decomposition for $\hat\theta_n$ in a model with drift terms}\label{071219-2}

So far we have considered a Gaussian system $(X_{1,t}-X_{1,0},X_{2,t}-X_{2,0})$
as the underlying model and
essentially finite dimensional Gaussian calculus served as a tool.
In this section, we will treat a system that has random drift terms.
It will be seen that the principal part of the estimator is the same
as in the case without drifts. Thus, the contribution of the principal part
to the asymptotic expansion of the estimator has already been assessed in the previous section.

Beyond being a useful tool for deriving asymptotic expansions of the distribution of $\hat\theta_n$,
the stochastic decomposition of the HY-estimator that we obtain below bridges the problem of estimating
the covariance and that of signal detection in Gaussian white noise. The latter problem has been extensively
studied in the statistical literature and we believe that the methodology developed for the problem of signal
detection may be of interest for our problem.

To state the main result of this section, let us recall that we deal with processes $X_1$ and $X_2$ given by
$$
\begin{cases}
dX_{1,t}=\beta_{1,t}\>dt+\sigma_{1,t}\,dB_{1,t}, &t\in[0,T],\\
dX_{2,t}=\beta_{2,t}\>dt+\sigma_{2,t}\,dB_{2,t}, &t\in [0,T],
\end{cases}
$$
where $\beta_{i,t}$ are progressively measurable processes
and assumed to be unknown to the observer. We will assume that these drift processes admit the following
stochastic decompositions:
$$
d\beta_{i,t}=\beta_{i,t}^{[0]}dt+\beta_{i1,t}^{[1]}\,dB_{1,t}+\beta_{i2,t}^{[1]}\,dB_{2,t},\ i=1,2,
$$
where $\beta_i^{[0]}$, $\beta_{ij}^{[1]}$, $i,j=1,2$ are progressively measurable processes with respect to
the filtration $\{\sigma(\mB_s,\,s\le t)\}_{t\in[0,T]}$.

In this section, we will separate the assumptions on the sampling scheme from those on
$\rho$ and on the drifts and volatilities of $X_1$ and $X_2$. For this reason,
let us introduce the following measures on
$([0,T]^2,\mathscr B_{[0,T]^2})$:
\begin{align*}
&\calv^I_{n}(\cdot)=b_n^{-1}|\cdot\cap\, \{\cup_I I\times I\}|,\quad
\calv^J_{n}(\cdot)=b_n^{-1}|\cdot\cap\, \{\cup_J J\times J\}|,\\
&\calv^{I\cap J}_{n}(\cdot)=b_n^{-1}|\cdot\cap\, \{\cup_{I\!,J} (I\cap
J)\times(I\cap J)\}|,\\
&\calv^{I\!,J}_{n}(\cdot)=b_n^{-1}\sum_{I\!,J}K_{I\!J}|\cdot\cap\, (I\times
J)|.
\end{align*}
Note that these measures depend on the sampling schemes and, therefore, they are random
if the sampling schemes are random. Similarly, let
$\calv_n^{I,I'\!\!,J}(\cdot)=b_n^{-2}|\cdot\cap \{\cup_{J} J\times I(J)\times I(J)\}|$,
$\calv_n^{I\!,J,J'}(\cdot)=b_n^{-2}|\cdot\cap \{\cup_{I} I\times J(I)\times J(I)\}|$ and
$\calv_n^{J(I),I(J),J\cap I}(\cdot)=b_n^{-2}|\cdot\cap \{\cup_{I\!,J} J(I)\times I(J)\times J\cap I\}|$
be (random) measures defined on $([0,T]^3,\mathscr B_{[0,T]^3})$.

\begin{description}
\item[\textbf{Assumption \textsf{P1}}] The random measures $\calv^I_{n}$,
$\calv^J_{n}$, $\calv^{I\cap J}_{n}$ and $\calv^{I\!,J}_{n}$ converge weakly
to some deterministic measures $\calv^I$, $\calv^J$, $\calv^{I\cap J}$ and
$\calv^{I\!,J}$ in probability, as $n\to\infty$. These measures are concentrated
on the diagonal $\mathcal D_T^2=\{(s,t)\in[0,T]^2:s=t\}$ and absolutely continuous
w.r.t.\ the Lebesgue measure on the line.

\item[\textbf{Assumption \textsf{P2}}] As $n\to\infty$, the random measures 
$\calv^{I,I'\!\!,J}_{n}$, $\calv^{I\!,J,J'}_{n}$ and  $\calv^{J(I),I(J),I\cap J}_{n}$  
converge weakly to some deterministic measures $\calv^{I,I'\!\!,J}$, $\calv^{I\!,J,J'}$ 
and  $\calv^{J(I),I(J),I\cap J}$ in probability. These measures are concentrated
on the diagonal $\mathcal D_T^3=\{(s,t,u)\in[0,T]^3:s=t=u\}$ and absolutely continuous
w.r.t.\ the Lebesgue measure on the line.
\end{description}

The weak convergence of $\calv^{I}_{n}$ to $\calv^{I}$  in
probability should be understood as follows: for every continuous
function $\varphi:[0,T]^2\to \bbR$, the sequence of random variables
$\int_{[0,T]^2}\varphi\,d\calv^{I}_{n}$ converges in probability to
$\int_{[0,T]^2}\varphi\,d\calv^{I}$ as $n$ tends to infinity.
For the purposes of the present work, it is probably possible to 
slightly relax Assumption \textsf{P2} by replacing the weak convergence
by the tightness condition. However, to avoid additional technicalities 
we assume that the weak convergence of measures stated in Assumption 
\textsf{P2} holds.

Recall that according to our assumptions $\Pi$ is independent of $\mB$, where $\Pi$ is the collection
of random intervals $I^i:=(S^{i-1}\wedge T,S^{i}\wedge T]$, $J^j:=(T^{j-1}\wedge T,T^{j}\wedge T]$ with $i=1,\ldots,N_1$ and
$j=1,\ldots,N_2$.
In what follows, the following notation will be used:
for two functions $f,g:[0,T]\to\bbR$, we denote by $f\cdot g$ the function $t\mapsto\int_0^t f_s\,dg_s$ and we
often write $I$ or $J$ instead of $\1_I$ or $\1_J$. Thus the estimator $\hat\theta_n$ can be rewritten as
\begin{align*}
\hat{\theta}_n=
\sum_{i=1}^{N_1}\sum_{j=1}^{N_2}\>K_{ij} \{I^i\cdot X_1\}_T\times \{J^j\cdot X_2\}_T.
\end{align*}
We want to derive an asymptotic expansion of the distribution of this estimator using a perturbation method 
based on a stochastic expansion of the estimator $\hat\theta_n$ itself. The main term in this stochastic expansion is 
\begin{align*}
M^n_T=b_n^{-1/2}\Big( \sum_{i,j}
K_{ij}\{(I^i\sigma_1)\cdot B_1\}_T\{(J^j\sigma_2)\cdot B_2\}_T
-\theta\Big).
\end{align*}
Note that the asymptotic expansion of the distribution of $M^n_T$ has already been obtained in preceding sections.
In this section, we will need a representation of $M^n$ as a stochastic integral with respect to the BM $(B_1,B_2)$ that
can be written---using the It\^o formula---as follows:
\begin{align}\label{M^n}
M^n& =\bbH^{1,n}\cdot B_1+\bbH^{2,n}\cdot B_2,
\end{align}
where
$\bbH^{1,n}=\sum_{I\!,J}b_n^{-1/2}\KIJ (J\sigma_2\cdot B_2)I\sigma_1$ and 
$\bbH^{2,n}=\sum_{I\!,J}b_n^{-1/2}\KIJ (I\sigma_1\cdot B_1)J\sigma_2$.

\begin{lemma}\label{lem-decomp}
Assume that $\sigma_1,\sigma_2$ and $\rho$ are bounded and $\beta_{ij}^{[\ell-1]}$s are bounded in $L^4$ uniformly
in $[0,T]$ for every $i,j,\ell\in\{1,2\}$. If $r_n^3=o_p(b_n^2)$, then
$$
b_n^{-1/2}(\hat{\theta}_n-\theta)=M^n_T+b_n^{1/2}(N^n_T+A^n_T)+o_p(b_n^{1/2}),
$$
where $dN^n_t= \bbG^{1,n}_tdB_{1,t}+\bbG^{2,n}_tdB_{2,t}$ is a local martingale with
\begin{align*}
\bbG^{1,n}&= b_n^{-1}\sum_{i,j}\> K_{ij}\{((J^j\beta_2)\cdot t)(I^i\sigma_1)\}
+b_n^{-1}\sum_{i,j}\>K_{ij}\{(T^{j}-T^{j-1}\vee\cdot)_+I^i\sigma_1\beta_{2,S^{i-1}}\},\\
\bbG^{2,n}&= b_n^{-1}\sum_{i,j}\> K_{ij}\{((I^i\beta_1)\cdot
t)(J^j\sigma_2)\}+ b_n^{-1}\sum_{i,j}\>K_{ij}
\{(S^{i}-S^{i-1}\vee\cdot)_+J^j\sigma_2\beta_{1,T^{j-1}}\},
\end{align*}
and $A^n$ is a bounded variation process defined by
\begin{align*}
A^n&= b_n^{-1}\sum_{i,j}\> K_{ij}\Bigl\{J^j\{[I^i\sigma_1(\beta^{[1]}_{21}+\beta^{[1]}_{22}\rho)]\cdot s\}
+I^i\{[J^j\sigma_2(\beta^{[1]}_{11}\rho+\beta^{[1]}_{12})]\cdot s \}\Bigr\}\cdot t
\\
&+ b_n^{-1}\sum_{i,j}\>K_{ij}\{(I^i\beta_1)\cdot t\} \times
\{(J^j\beta_2)\cdot t\}.
\end{align*}
\end{lemma}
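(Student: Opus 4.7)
The plan is to apply a staged It\^o expansion to $\hat\theta_n - \theta$, identify the leading contributions matching $b_n^{1/2}M^n_T$, $b_n N^n_T$, and $b_n A^n_T$, and bound the residual by an $O_p(r_n^{3/2})$ term that is $o_p(b_n)$ exactly under the hypothesis. Throughout, set $X_{k,t}^{(L)} := \int_0^t L_s\,dX_{k,s} = M^{(L)}_k(t) + U^{(L)}_k(t)$, splitting each into its continuous martingale part $M^{(L)}_k$ and drift (BV) part $U^{(L)}_k$.

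First I would apply It\^o's product formula to each factor $X_{1,T}^{(I^i)}X_{2,T}^{(J^j)}$. Since $[X_1^{(I^i)},X_2^{(J^j)}]_T = v(I^i\cap J^j)$ vanishes whenever $K_{ij}=0$, the identity $\sum_{i,j}K_{ij}v(I^i\cap J^j) = \sum_{i,j}v(I^i\cap J^j) = \theta$ yields
\begin{align*}
\hat\theta_n - \theta
= \sum_{i,j} K_{ij}\Big\{\int_0^T X_{1,t}^{(I^i)}dX_{2,t}^{(J^j)} + \int_0^T X_{2,t}^{(J^j)}dX_{1,t}^{(I^i)}\Big\}.
\end{align*}
Substituting $dX_k^{(L)} = L\sigma_k\,dB_k + L\beta_k\,dt$ and $X_k^{(L)} = M_k^{(L)} + U_k^{(L)}$ produces eight elementary terms, six of which are identified directly. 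The two pure martingale-against-martingale terms, recombined by the same It\^o identity applied to $M_1^{(I^i)}M_2^{(J^j)}$ (whose brackets again sum to $\theta$), recover $T_1-\theta = b_n^{1/2}M^n_T$. The two drift-drift terms collapse by ordinary integration by parts into $\sum K_{ij}U_1^{(I^i)}(T)U_2^{(J^j)}(T)$, precisely $b_n$ times the second summand of $A^n_T$. The two drift-against-stochastic terms $\sum K_{ij}\int_0^T U_k^{(L)}(t)L'_t\sigma_{k',t}\,dB_{k',t}$ are already in the required form and produce $b_n\int_0^T\bbG^{k',n}_{\mathrm{first}}\,dB_{k'}$.

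It then remains to handle the two martingale-against-drift terms, exemplified by $R := \sum_{i,j}K_{ij}\int_0^T M_1^{(I^i)}(t)J^j_t\beta_{2,t}\,dt$. Here I would apply It\^o again to $M_1^{(I^i)}(t)\beta_{2,t}$ started at $t=S^{i-1}$ where $M_1^{(I^i)}(S^{i-1})=0$, obtaining
\begin{align*}
M_1^{(I^i)}(t)\beta_{2,t}
= \beta_{2,S^{i-1}}M_1^{(I^i)}(t)
+ \int_{S^{i-1}}^{t} I^i_s\sigma_{1,s}(\beta_{21}^{[1]}+\beta_{22}^{[1]}\rho)_s\,ds
+ \mathfrak r_{ij}(t),
\end{align*}
where $\mathfrak r_{ij}(t) := \int_{S^{i-1}}^t(\beta_{2,s}-\beta_{2,S^{i-1}})I^i_s\sigma_{1,s}\,dB_{1,s} + \int_{S^{i-1}}^t M_1^{(I^i)}(s)\,d\beta_{2,s}$. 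Integrating over $t\in J^j$ and applying stochastic Fubini, the leading term $\beta_{2,S^{i-1}}\int_{J^j}M_1^{(I^i)}(t)\,dt$ becomes $\beta_{2,S^{i-1}}\int_0^T I^i_u\sigma_{1,u}(T^j-T^{j-1}\vee u)_+\,dB_{1,u}$, which after summing with $K_{ij}$ is exactly $b_n$ times the second summand of $\bbG^{1,n}$ against $dB_1$; the bracket term reproduces one half of the first summand of $b_nA^n_T$. The partner term $R_{4a}$, treated symmetrically, delivers the second summand of $\bbG^{2,n}$ and the other half of $A^n$'s first summand.

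The technical heart, and the main obstacle, is to show that $R_{\mathrm{rem}} := \sum_{i,j}K_{ij}\int_{J^j}\mathfrak r_{ij}(t)\,dt$ is $o_p(b_n)$. A further application of stochastic Fubini casts $R_{\mathrm{rem}}$ as a single stochastic integral $\int_0^T H_n(s)\,dB_{k,s}$ with $H_n(s) = \sum_{i,j}K_{ij}\tilde H_{ij}(s)(T^j-T^{j-1}\vee s)_+$ for an adapted $\tilde H_{ij}$. At any fixed $s$, the constraints $K_{ij}=1$ and $s\in[S^{i-1},T^j]$ together with the partition-sparsity established in Lemma \ref{lambdasup} limit the nonzero summands to $O(1)$. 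Using $\|M_1^{(I^i)}(s)\|_{L^2}\vee\|\beta_{2,s}-\beta_{2,S^{i-1}}\|_{L^2}\le Cr_n^{1/2}$, the uniform $L^4$-bound on $\beta_{ij}^{[\ell]}$, and $(T^j-T^{j-1}\vee s)_+\le r_n$, one obtains $\Ex[H_n(s)^2]\le C r_n^3$; It\^o isometry then yields $\|R_{\mathrm{rem}}\|_{L^2} = O(r_n^{3/2})$, which is $o_p(b_n)$ precisely under the hypothesis $r_n^3 = o_p(b_n^2)$. The drift component $\beta_{2,s}^{[0]}\,ds$ of $d\beta_{2,s}$ is handled analogously via Cauchy--Schwarz.
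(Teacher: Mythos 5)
Your proposal follows essentially the same route as the paper's proof: the same splitting into martingale--martingale, drift--martingale and drift--drift contributions, the same It\^o/Fubini manipulations that freeze the drift at the left endpoint of the relevant sampling interval to produce the $(T^j-T^{j-1}\vee\cdot)_+$ kernels and the covariation correction in $A^n$, and the same conditional $L^2$ bound of order $r_n^3$ on the residual, giving $O_p(r_n^{3/2})=o_p(b_n)$ under $r_n^3=o_p(b_n^2)$. The argument is correct; the only minor imprecision is the claim of ``$O(1)$ nonzero summands'' at fixed $s$ (many $J^j$ may meet a single $I^i$), but the needed bound follows anyway since $\sum_j K_{ij}(T^j-T^{j-1}\vee s)_+\le|J(I^i)|\le Cr_n$.
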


Lemma~\ref{lem-decomp} provides a stochastic decomposition of the HY-estimator with a RHS
depending on $n$. Under the assumptions P1 and P2 of the convergence of random measures associated to
the sampling scheme, it is possible to obtain a refinement of this result with a RHS depending on $n$
exclusively through $b_n$. To this end, limit theorems for martingales will be used. An
important step for proving limit theorems for martingales is the computation of the limits of their
quadratic variations and covariations, which will be treated below.

\subsection{Convergence of quadratic variations and covariations}
To establish an asymptotic expansion of $b_n^{-1/2}(\hat\theta_n-\theta)$ that is more
explicit than the one given by Lemma~\ref{lem-decomp}, we need to identify the limiting distribution
of the martingale $(B_1,B_2,M^n,N^n)$ as $n$ goes to infinity. The convergence of
the quadratic variation-matrix is a classical tool for proving the convergence of a martingale.
Most results of the present section being quite technical, we postponed their proofs to Section~\ref{App0.5}. 
 
We start with the cross terms $\langle M^n,B_1\rangle$ and $\langle M^n,B_2\rangle$. In view of (\ref{M^n}), for $\nu=1,2$, we have
\begin{align*}
\langle M^n,B_\nu \rangle &=
\bbH^{1,n}\cdot\langle B_1,B_\nu \rangle +\bbH^{2,n}\cdot\langle
B_2,B_\nu \rangle\\
&=
\sum_{I\!,J}b_n^{-1/2}\KIJ \Big[\{(J\sigma_2\cdot
B_2)I\sigma_1\}\cdot\langle B_1,B_\nu \rangle+
\{(I\sigma_1\cdot B_1)J\sigma_2\}\cdot\langle B_2,B_\nu \rangle\Big].
\end{align*}

\begin{lemma}\label{lem5.6} If $\sigma_1$, $\sigma_2$ and $\rho$ are bounded in
$[0,T]$ and $r_n^2=o_p(b_n)$, then
\begin{align*}
\sup_{\nu=1,2}&\bigg|\sum_{I\!,J}b_n^{-1/2}\KIJ \big(\{(J\sigma_2\cdot
B_2)I\sigma_1\}\cdot\langle B_1,B_\nu
\rangle\big)_t\bigg|\xrightarrow[n\to\infty]{P} 0,\\
\sup_{\nu=1,2}&\bigg|\sum_{I\!,J}b_n^{-1/2}\KIJ \big(\{(I\sigma_1\cdot
B_1)J\sigma_2\}\cdot\langle B_2,B_\nu
\rangle\big)_t\bigg|\xrightarrow[n\to\infty]{P} 0,
\end{align*}
for every $t\in[0,T]$. As a consequence, for every $t\in[0,T]$,
$\max_{\nu=1,2}|\langle M^n,B_\nu \rangle_t|$ tends to zero in
probability as $n\to\infty$.
\end{lemma}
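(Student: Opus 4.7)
The plan is to rewrite each of the two displayed sums as a single It\^o integral via stochastic Fubini, and then control its conditional $L^2$-norm given the (independent) sampling scheme $\Pi$ using the It\^o isometry; the uniform bound on the resulting integrand will be of order $b_n^{-1/2}r_n$, so the conditional second moment is $O(b_n^{-1}r_n^2)=o_p(1)$ under the standing hypothesis.

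Fix $t\in[0,T]$ and $\nu\in\{1,2\}$. Because $d\langle B_1,B_\nu\rangle_s=c_{\nu,s}\,ds$ with $c_{1,s}\equiv 1$ and $c_{2,s}=\rho_s$, both bounded by assumption, the first sum reads
$$S^{(1)}_n(t)=b_n^{-1/2}\sum_{I,J}\KIJ\int_0^t (J\sigma_2\cdot B_2)_s\,\mathbf 1_I(s)\sigma_1(s)c_{\nu,s}\,ds.$$
Expanding $(J\sigma_2\cdot B_2)_s=\int_0^s\mathbf 1_J(u)\sigma_2(u)\,dB_{2,u}$ and applying the stochastic Fubini theorem (conditional on $\Pi$, the integrands $\mathbf 1_I,\mathbf 1_J,\sigma_1,\sigma_2,c_\nu$ are bounded and deterministic in time), I would rewrite
$$S^{(1)}_n(t)=\int_0^t H^{(1)}_u\,dB_{2,u},\qquad H^{(1)}_u=b_n^{-1/2}\sigma_2(u)\sum_{I,J}\KIJ\mathbf 1_J(u)\int_u^t\mathbf 1_I(s)\sigma_1(s)c_{\nu,s}\,ds.$$

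The key step is the pointwise bound $|H^{(1)}_u|\le C\,b_n^{-1/2}r_n$. For each $u$ there is at most one $J\in\Pi^2$ containing $u$, and after the merging reduction used in the proof of Lemma~\ref{lambdasup} at most three intervals $I\in\Pi^1$ satisfy $\KIJ=1$; moreover each inner Lebesgue integral is bounded by $\|\sigma_1\|_\infty\|c_\nu\|_\infty\,r_n$. Since $\Pi$ is independent of $\mB$, the It\^o isometry conditional on $\Pi$ gives
$$\Ex^\Pi[(S^{(1)}_n(t))^2]\le C^2T\,b_n^{-1}r_n^2,$$
so the assumption $r_n^2=o_p(b_n)$ together with Markov's inequality forces $S^{(1)}_n(t)\to 0$ in probability. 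The second displayed sum is treated by the symmetric argument: swapping the roles of $(I,\sigma_1,B_1)$ and $(J,\sigma_2,B_2)$, the same stochastic Fubini step expresses it as $\int_0^tH^{(2)}_u\,dB_{1,u}$ with $|H^{(2)}_u|\le C\,b_n^{-1/2}r_n$, and the same conditional $L^2$ estimate applies.

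The final assertion about $\langle M^n,B_\nu\rangle_t$ is immediate from the decomposition $M^n=\bbH^{1,n}\cdot B_1+\bbH^{2,n}\cdot B_2$, since by polarization $\langle M^n,B_\nu\rangle_t$ is exactly the sum of the two expressions just controlled. I expect the main obstacle to be the stochastic Fubini step combined with the ``at most three intervals'' bound: once one accepts the merging reduction of Lemma~\ref{lambdasup} and carefully tracks the integration regions after swapping $du$ and $ds$, the uniform estimate on $H^{(1)}$ drops out and the rest of the argument is routine.
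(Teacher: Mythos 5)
Your proposal is correct and follows essentially the same route as the paper: stochastic Fubini to express each sum as a single It\^o integral, then the conditional (given $\Pi$) $L^2$ bound of order $b_n^{-1}r_n^2$ via the It\^o isometry, and Markov's inequality. The only cosmetic difference is that the paper bounds the integrand directly through $\sum_J|J|\,|I(J)|^2\le Cr_n^2$ using $|I(J)|\le|J|+2r_n\le 3r_n$, so the detour through the merging reduction of Lemma~\ref{lambdasup} is unnecessary (though harmless, since the sum is unchanged under that reduction).
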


We study now the behavior of the quadratic variation
\begin{align}
\langle M^n,M^n \rangle_t &= \sum_{c,d=1}^2
(\bbH^{c,n}\bbH^{d,n})\cdot \langle B_c,B_d \rangle_t
\end{align}
as $n$ tends to infinity.
First, we note that
\begin{align*}
\bbH^{1,n}\bbH^{2,n} &=
\sum_{i,j,i'\!,j'}b_n^{-1}K_{ij}K_{i'j'}(J^j\sigma_2\cdot
B_2)I^i\sigma_1 (I^{i'}\sigma_1\cdot B_1)J^{j'}\sigma_2
\\
&=\sum_{i,j,i'\!,j'}b_n^{-1}K_{ij}K_{i'j'} (J^j\sigma_2\cdot
B_2)J^{j'}\sigma_21_{\{j\leq j'\}} (I^{i'}\sigma_1\cdot
B_1)I^i\sigma_11_{\{i'\leq i\}}
\end{align*}
Denote by $R^n(i,i'\!,j,j')$ the summand on the right-hand side of the
last equation. This term is different from zero only if the
conditions $I^i\cap J^j\not=\emptyset$, $I^i\cap
J^{j'}\not=\emptyset$, $I^{i'}\cap J^{j'}\not=\emptyset$, $j\le j'$
and $i'\le i$ are fulfilled. If $i'<i$, then these conditions are
fulfilled only if $j=j'$. Similarly, the terms with $j<j'$ are
non-zero only if $i=i'$. This leads to
\begin{align*}
\bbH^{1,n}\bbH^{2,n} &= \sum_{i,j,j':\>j\leq
j'}b_n^{-1}K_{ij}K_{ij'} (J^j\sigma_2\cdot B_2)J^{j'}\sigma_2
(I^i\sigma_1\cdot B_1)I\sigma_1
\\&
+\sum_{i'\!,j,i\,:\>i'\leq i}b_n^{-1}K_{ij}K_{i'j} (J^j\sigma_2\cdot
B_2)J^j\sigma_2 (I^{i'}\sigma_1\cdot B_1)I^i\sigma_1\\&
-\sum_{I\!,J}b_n^{-1}K_{ij} (J^j\sigma_2\cdot B_2)J^j\sigma_2
(I^{i}\sigma_1\cdot B_1)I^i\sigma_1.
\end{align*}
Sum them up in $j'$ and in $i$
respectively and use
\begin{align*}
(J^j\sigma_2\cdot B_2)I\sum_{j':\>j\leq j'}K_{ij'}J^{j'} &=
(J^j\sigma_2\cdot B_2)I\1_{[T^{j-1},T]}=(J^j\sigma_2\cdot B_2)I,
\\
(I^{i'}\sigma_1\cdot B_1)J\sum_{i:\>i'\leq i}K_{ij}I^i &=
(I^{i'}\sigma_1\cdot B_1)J\1_{[S^{i-1},T]}=(I^{i'}\sigma_1\cdot
B_1)J.
\end{align*}
to obtain
$\bbH^{1,n}\bbH^{2,n} =
b_n^{-1}\sum_{I\!,J}\sigma_1\sigma_2K_{I\!J}(J\sigma_2\cdot
B_2)(I\sigma_1\cdot B_1)(I+J-IJ).
$ This implies that
\begin{align*}
\bbH^{1,n}\bbH^{2,n}\cdot\langle B_1,B_2 \rangle_t &=
\int_0^t\sigma_{1,s}\sigma_{2,s}\sum_{I\!,J}\tilde{K}^n_{I\!J}(s)(J\sigma_2\cdot
B_2)_s(I\sigma_1\cdot B_1)_s\>d\langle B_1,B_2 \rangle_s,
\end{align*}
where $\tilde{K}^n_{I\!J}(t)=b_n^{-1}\KIJ (I_t+J_t-I_tJ_t)$.

\begin{lemma}\label{lem8} Assume that $r_n^3=o_p(b_n^2)$ and the functions $\sigma_1$, $\sigma_2$ and $\rho$ are continuous.
If Assumption \textsf{P1} is fulfilled then,  for any $t\in[0,T]$,
\begin{align*}
&\int_0^t\bbH^{1,n}_s\bbH^{2,n}_s\,d\langle B_1,B_2 \rangle_s
\xrightarrow[n\to\infty]{P} \frac12\int_0^t
h_s^2\,\{\calv^{I}(ds)+\calv^{J}(ds)-\calv^{I\cap
J}(ds)\},\\
&\int_0^t(\bbH^{1,n}_s)^2\,d\langle B_1,B_1
\rangle_s+\int_0^t(\bbH^{2,n}_s)^2\,d\langle B_2,B_2 \rangle_s
\xrightarrow[n\to\infty]{P} \int_0^t
\sigma_{1,s}^2\sigma_{2,s}^2\,\calv^{I\!,J}(ds)
\end{align*}
and consequently
$$
\langle M^n,M^n\rangle_t\xrightarrow[n\to\infty]{P}\int_0^t
h_s^2\,\{\calv^{I}(ds)+\calv^{J}(ds)-\calv^{I\cap
J}(ds)\}+\int_0^t
\sigma_{1,s}^2\sigma_{2,s}^2\,\calv^{I\!,J}(ds).
$$
\end{lemma}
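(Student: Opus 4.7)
The strategy is to reduce each quadratic variation to a deterministic functional of the sampling intervals, amenable to Assumption \textsf{P1}, plus a martingale remainder that vanishes thanks to $r_n^3=o_p(b_n^2)$. The key analytical step is Itô's product formula: for every pair $(I,J)$ with $K_{IJ}=1$,
\[
(I\sigma_1\cdot B_1)_s(J\sigma_2\cdot B_2)_s=L^{I\!,J}_s+\int_0^s\1_{I\cap J}(u)\,h_u\,du,
\]
with $L^{I\!,J}$ a local martingale. Analogous identities apply to $(J\sigma_2\cdot B_2)(J'\sigma_2\cdot B_2)$ and to $(I\sigma_1\cdot B_1)(I'\sigma_1\cdot B_1)$, whose drift parts vanish unless $J=J'$ (resp.\ $I=I'$) since $\Pi^1_n,\Pi^2_n$ are partitions.

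Substituting these decompositions into the explicit formula for $\bbH^{1,n}\bbH^{2,n}$ derived just before the lemma, the drift part of $\int_0^t\bbH^{1,n}_s\bbH^{2,n}_s\,d\langle B_1,B_2\rangle_s$ becomes
\[
b_n^{-1}\sum_{I\!,J}K_{IJ}\int_0^t h_s\bigl(\1_I(s)+\1_J(s)-\1_I(s)\1_J(s)\bigr)\!\int_0^s \1_{I\cap J}(u)\,h_u\,du\,ds.
\]
The first term (the $\1_I(s)$ part) simplifies via the identity $\sum_J K_{IJ}\1_{I\cap J}(u)=\1_I(u)$, becoming
$b_n^{-1}\sum_I\iint_{I\times I,\,u\leq s\leq t}h_sh_u\,du\,ds$; by symmetrization in $s\leftrightarrow u$ (here $\tfrac12$ arises) this equals $\tfrac12\int_{[0,t]^2}h(s)h(u)\,d\calv^I_n(s,u)$ plus a diagonal $o(1)$ correction. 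Continuity of $h$ and Assumption \textsf{P1} (with $\calv^I$ supported on the diagonal) yield the limit $\tfrac12\int_0^t h_s^2\,\calv^I(ds)$. The $\1_J$ and $-\1_I\1_J$ terms are handled symmetrically and produce the $\calv^J$ and $\calv^{I\cap J}$ contributions. For the second convergence, disjointness collapses the double sums in $(\bbH^{c,n})^2$ to diagonal sums; pooling the drift parts of $(\bbH^{1,n})^2$ and $(\bbH^{2,n})^2$ and symmetrizing in $s\leftrightarrow u$ produces $\int_{[0,t]^2}\sigma_{1,s}^2\sigma_{2,u}^2\,d\calv^{I\!,J}_n(s,u)$, which by \textsf{P1} converges to $\int_0^t\sigma_{1,s}^2\sigma_{2,s}^2\,\calv^{I\!,J}(ds)$.

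For the martingale residuals, typified by $R^n_t=b_n^{-1}\sum_{I\!,J}K_{IJ}\int_0^t h_s\1_I(s)L^{I\!,J}_s\,ds$, I would apply the stochastic Fubini theorem to rewrite $R^n_t$ as a single stochastic integral w.r.t.\ $dB_1$ and $dB_2$, with integrand supported, for each $u$, on the unique $I(u)\in\Pi^1_n$ containing $u$ times a bounded function of the $\le 3$ intervals $J$ intersecting $I(u)$ (Lemma~\ref{lambdasup}). An $L^2$-estimate using $E[(J\sigma_2\cdot B_2)_u^2]\le C|J|\le Cr_n$ and $|\int_u^t h_s\1_{I(u)}(s)ds|\le \|h\|_\infty r_n$ then yields $E[(R^n_t)^2]\le C r_n^3/b_n^2$, which is $o_p(1)$ exactly by hypothesis. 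The $(\bbH^{c,n})^2$-remainders are handled by the same stochastic Fubini and $L^2$ argument.

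The third convergence, for $\langle M^n,M^n\rangle$, follows by bilinearity:
\[
\langle M^n,M^n\rangle_t=(\bbH^{1,n})^2\cdot\langle B_1,B_1\rangle_t+2\bbH^{1,n}\bbH^{2,n}\cdot\langle B_1,B_2\rangle_t+(\bbH^{2,n})^2\cdot\langle B_2,B_2\rangle_t,
\]
summing the first two limits. The principal obstacle is the third step: one has to show that the $L^2$-bound on the remainders scales precisely as $r_n^3/b_n^2$ (not the naïve $r_n/b_n$ obtained by a crude $L^1$-estimate on each $L^{I\!,J}$), which requires the stochastic Fubini step and the counting bound $\sum_J K_{IJ}\le 3$ of Lemma~\ref{lambdasup} to consolidate the sum over $(I,J)$ into a single martingale before applying BDG.
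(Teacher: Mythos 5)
Your proposal follows essentially the same route as the paper's proof: Itô's product formula splits each integrand into a drift part that is exactly an integral against the empirical measures $\calv^{I}_n,\calv^{J}_n,\calv^{I\cap J}_n,\calv^{I\!,J}_n$ (handled by Assumption \textsf{P1} and the concentration of the limit measures on the diagonal) plus martingale remainders, which the paper also kills by interchanging the time integral with the stochastic integral and an $L^2$/Itô-isometry bound of order $r_n^3/b_n^2$ using the counting bound $\sum_J\KIJ\le3$. The argument is correct and matches the paper's in all essential steps.
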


Using the claims of two last lemmas, one can already derive the asymptotic distribution of 
the martingale $(B_1,B_2,M^n)$ as $n\to\infty$. However, for our purposes,
it is crucial to know the asymptotics of the joint distribution of the triplet $(B_1,B_2,M^n)$ with
the martingale $N^n$. 

\begin{lemma}\label{lem3.8}
If $\sigma_1,\sigma_2$ and $\rho$ are bounded, $\sup_{t\in[0,T]}
\Ex[\beta_{i,t}^2]<\infty$, $i=1,2$ and $r_n^4=o_p(b_n^3)$ as $n\to\infty$, then for any $t\in[0,T]$ the
sequence of random variables $\langle M^n,N^n\rangle_t$ tends
in probability to zero as $n$ tends to infinity.
\end{lemma}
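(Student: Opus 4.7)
The idea is to write $\langle M^n,N^n\rangle_t$ as four Lebesgue integrals in $ds$, decompose each integrand into a martingale-difference principal part and a small remainder, and bound the two contributions separately. Using the representations from~(\ref{M^n}) and Lemma~\ref{lem-decomp}, $\langle M^n,N^n\rangle_t=\sum_{c,d=1}^2\int_0^t\bbH^{c,n}_s\bbG^{d,n}_s\,d\langle B_c,B_d\rangle_s$, so by the symmetry between the roles of $(1,2)$ it suffices to treat the typical term $\int_0^t\bbH^{1,n}_s\bbG^{1,n}_s\,ds$ in detail.

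The central observation is that for $s$ in a single interval $I^i$, the bracket defining $\bbG^{1,n}_s$ telescopes to $\beta_{2,S^{i-1}}|\mathcal J(i)|+\int_{T^{j_1(i)-1}}^s(\beta_{2,u}-\beta_{2,S^{i-1}})\,du$, where $\mathcal J(i)=\bigcup_{j:K_{ij}=1}J^j=(T^{j_1(i)-1},T^{j_k(i)}]$. Combining the It\^o representation $d\beta_{2,u}=\beta_{2,u}^{[0]}du+\sum_c\beta_{2c,u}^{[1]}dB_{c,u}$ with the $L^4$-boundedness of the coefficients $\beta_{ij}^{[\ell-1]}$ (inherited from the hypotheses of Lemma~\ref{lem-decomp}) yields $\Ex[(\beta_{2,u}-\beta_{2,S^{i-1}})^2]=O(r_n)$ for $u\in\mathcal J(i)$, so the ``fluctuation'' term has $L^2$-norm of order $r_n^{3/2}$. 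This splits the target integral as $D_n+R_n$, where $D_n$ is the frozen principal piece and $R_n$ collects the fluctuation.

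The remainder $R_n$ is controlled by Cauchy--Schwarz in $L^2([0,t],ds)$: Lemma~\ref{lem8} supplies $\int_0^t(\bbH^{1,n}_s)^2ds=O_p(1)$, while the fluctuation estimate gives $b_n^{-2}\int_0^t|\mathrm{fluct}(s)|^2ds=O_p(r_n^3/b_n^2)$, hence $R_n=o_p(1)$ under $r_n^4=o_p(b_n^3)$. For the principal piece $D_n=\sum_i\xi_i$, with $\xi_i$ proportional to $b_n^{-3/2}\beta_{2,S^{i-1}}|\mathcal J(i)|\int_{I^i\cap[0,t]}\sigma_{1,s}^2\sum_{j:K_{ij}=1}(J^j\sigma_2\cdot B_2)_s\,ds$, I would further split $(J^j\sigma_2\cdot B_2)_s=(J^j\sigma_2\cdot B_2)_{S^{i-1}}+\int_{S^{i-1}}^s1_{J^j}\sigma_{2,u}\,dB_{2,u}$, yielding $\xi_i=\xi_i^{\mathrm{pre}}+\xi_i^{\mathrm{new}}$. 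The ``new'' contributions assemble into a martingale-difference sequence with respect to $(\mathcal F_{S^i})_i$, whose conditional second moments are bounded by It\^o's isometry as $\Ex[(\xi_i^{\mathrm{new}})^2]=O(r_n^5/b_n^3)$; summation yields $\Ex[(\sum_i\xi_i^{\mathrm{new}})^2]=O(N_1r_n^5/b_n^3)=o_p(1)$ once the natural bound $N_1=O_p(b_n^{-1})$ (inherited from the tightness of $\calv^{I\!,J}_n$) is used. The pre-increment sum $\sum_i\xi_i^{\mathrm{pre}}$ is handled by a parallel second-moment computation, using that for distant indices $i$ the defining Brownian integrals live on disjoint regions.

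The main obstacle is that a naive Cauchy--Schwarz applied directly to $\bbH^{1,n}\bbG^{1,n}$ only yields $O_p(r_n/b_n)$, which can diverge under the hypothesis alone (for instance $r_n/b_n\sim\log n$ in the Poisson scheme of Section~\ref{Sec4}). The martingale-difference refinement above, which produces the additional $\sqrt{N_1}\sim b_n^{-1/2}$-type cancellation absent from pathwise estimates, is what closes the gap and allows $r_n^4=o_p(b_n^3)$ to suffice.
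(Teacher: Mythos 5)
Your overall strategy matches the paper's: expand $\langle M^n,N^n\rangle_t$ into the four cross-integrals, freeze the drift $\beta_2$ at a nearby sampling time, kill the fluctuation by Cauchy--Schwarz, and extract a martingale cancellation from the frozen principal part. Your diagnosis that pathwise Cauchy--Schwarz only yields $O_p(r_n/b_n)$ is correct, and your remainder bound $R_n=O_p(r_n^{3/2}/b_n)=o_p(1)$ is fine (since $r_n^3=o_p(b_n^2)$ follows from $r_n^4=o_p(b_n^3)$ and $b_n\to0$). The gaps are in the principal part. First, the bound $\Ex\big[(\sum_i\xi_i^{\mathrm{new}})^2\big]=O(N_1r_n^5/b_n^3)$ with $N_1=O_p(b_n^{-1})$ gives $O_p(r_n^5/b_n^4)$, which does \emph{not} tend to zero under $r_n^4=o_p(b_n^3)$ alone (take $r_n^4/b_n^3=b_n^{1/5}$); moreover $N_1=O_p(b_n^{-1})$ is not a hypothesis of the lemma, and tightness of $\calv_n^{I\!,J}$ only yields a \emph{lower} bound of that order on $N_1$. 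The fix is to sum correctly: $\Ex[(\xi_i^{\mathrm{new}})^2]\le Cb_n^{-3}|\mathcal J(i)|^2|I^i|^3$, so $\sum_i\Ex[(\xi_i^{\mathrm{new}})^2]\le Cb_n^{-3}r_n^4\sum_i|I^i|=O(b_n^{-3}r_n^4)$, with no count of $N_1$ needed. Second, $\sum_i\xi_i^{\mathrm{pre}}$ is \emph{not} a martingale-difference array: both $\beta_{2,S^{i-1}}$ and $(J^j\sigma_2\cdot B_2)_{S^{i-1}}$ are $\calf_{S^{i-1}}$-measurable, so the summands are not conditionally centered; the disjointness of the underlying Brownian increments therefore does not annihilate the cross terms, and brute-force Cauchy--Schwarz on them gives only $O(r_n^3/b_n^3)$, which diverges. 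As written, this step is asserted rather than proved and would not go through; it needs a further centering of $\beta_{2,S^{i-1}}$ at the left endpoint of $J(I^i)$ before any orthogonality is available.

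The paper avoids both difficulties by a shorter route. After freezing $\beta_2$ at $a_{J(I)}$, it applies a stochastic Fubini to pull the $dB_{2,u}$ integral of the $\bbH^{1,n}$-factor outside and then uses a single It\^o isometry: the resulting coefficient of $dB_{2,u}$ is bounded pointwise by $\sum_I J(I)_u|I|\,|J(I)|\,|\beta_{2,a_{J(I)}}|\le Cr_n^2\max_I|\beta_{2,a_{J(I)}}|$, whence $\Ex^\Pi[\calt_{1,n}^2]\le Cb_n^{-3}r_n^4\sup_t\Ex[\beta_{2,t}^2]=o_p(1)$ in one stroke. This is the same cancellation you are reaching for, but indexed by the integration variable $u$ rather than by the interval index $i$, which is what makes the bookkeeping close under exactly $r_n^4=o_p(b_n^3)$. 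If you replace your discrete splitting by this isometry argument, or repair the summation and supply an honest treatment of the pre-increment sum, your proof becomes complete.
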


An interesting fact revealed by this lemma is the orthogonality of $M^n$ and $N^n$ in terms
of quadratic covariation. This indicates that the limiting distribution of $(M^n,N^n)$ 
is that of two independent martingales. This statement will be rigorously proved at the end of 
this section. Prior to presenting that proof, we wish to investigate the structure of the
limiting distribution of $N^n$ and how it relates to the BM $\mB$.


\begin{lemma}\label{lem3.9} Assume that $r_n^3=o_p(b_n^2)$ and that $\sup_{t\in[0,T]}\Ex[(\beta_{ij,t}^{[\ell-1]})^2]<\infty$ for every $i,j,\ell\in\{1,2\}$. Then, under Assumption \textsf{P1}, for every fixed $t\in[0,T]$, we have
\begin{align*}
\langle N^n,B_1\rangle_t&\xrightarrow[n\to\infty]{P} \int_0^t (\beta_{2,s}\sigma_{1,s}+\beta_{1,s}\sigma_{2,s}\rho_s)\calv^{I\!,J}(ds),\\
\langle N^n,B_2\rangle_t&\xrightarrow[n\to\infty]{P} \int_0^t (\beta_{1,s}\sigma_{2,s}+\beta_{2,s}\sigma_{1,s}\rho_s)\calv^{I\!,J}(ds)
\end{align*}
\end{lemma}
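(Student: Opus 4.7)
The plan is to reduce each quadratic covariation to a linear functional of the random measure $\calv^{I\!,J}_n$ and then invoke Assumption \textsf{P1}. By the definition of $N^n$ and the covariance structure of $(B_1,B_2)$,
\begin{align*}
\langle N^n,B_1\rangle_t &= \int_0^t\bbG^{1,n}_s\,ds+\int_0^t\bbG^{2,n}_s\rho_s\,ds,\\
\langle N^n,B_2\rangle_t &= \int_0^t\bbG^{1,n}_s\rho_s\,ds+\int_0^t\bbG^{2,n}_s\,ds.
\end{align*}
By the symmetry between $\bbG^{1,n}$ and $\bbG^{2,n}$ (swap $I\leftrightarrow J$, $\sigma_1\leftrightarrow\sigma_2$, $\beta_1\leftrightarrow\beta_2$, $S\leftrightarrow T$), it suffices to analyze $\int_0^t\bbG^{1,n}_s f_s\,ds$ for a bounded continuous deterministic weight $f$.

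Substituting the two summands defining $\bbG^{1,n}$, for each pair $(i,j)$ with $K_{ij}=1$ and each $s\in I^i$ the integrand contains
$$
r_{ij}(s)=\int_0^s J^j(u)\beta_{2,u}\,du+(T^j-T^{j-1}\vee s)_+\beta_{2,S^{i-1}}.
$$
The elementary observation driving the argument is that if $\beta_2$ were a constant $c$, then $r_{ij}(s)=c|J^j|$ for every $s\in I^i$, as verified by a three-case analysis ($s<T^{j-1}$, $s\in J^j$, $s>T^j$) in which the two summands always combine to reconstruct the full length $|J^j|$. Using the semimartingale decomposition $d\beta_2=\beta_2^{[0]}dt+\sum_\nu\beta_{2\nu}^{[1]}dB_\nu$ and the $L^4$-bounds on the coefficients, one gets $\Ex[(\beta_{2,u}-\beta_{2,s})^2]=O(|u-s|)$, whence $\|r_{ij}(s)-|J^j|\beta_{2,s}\|_{L^2}=O(r_n^{3/2})$ by Cauchy--Schwarz. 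After the standard reparametrization of the partitions used in Lemma~\ref{lambdasup} (which guarantees $\sum_j K_{ij}\leq 3$), the cumulative $L^2$-error in $\int_0^t\bbG^{1,n}_s f_s\,ds$ is of order $b_n^{-1}r_n^{3/2}\sum_{i,j}K_{ij}|I^i|=O(r_n^{3/2}/b_n)$, which is $o_p(1)$ by the hypothesis $r_n^3=o_p(b_n^2)$.

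After discarding this vanishing error, the integral rewrites as
$$
b_n^{-1}\sum_{i,j}K_{ij}|J^j|\int_{I^i\cap[0,t]}\sigma_{1,s}\beta_{2,s}f_s\,ds
=\int_{[0,T]^2}\1_{[0,t]}(s)\sigma_{1,s}\beta_{2,s}f_s\,\calv^{I\!,J}_n(ds,du).
$$
Because $\Pi$ is independent of $\mB$, conditioning on $\mB$ turns $\beta_2$ into a fixed continuous sample path (continuity being inherited from the semimartingale representation); Assumption \textsf{P1} then yields convergence in probability of the right-hand side to $\int_0^t\sigma_{1,s}\beta_{2,s}f_s\,\calv^{I\!,J}(ds)$, the concentration of $\calv^{I\!,J}$ on the diagonal together with its absolute continuity handling both the $u$-dependence and the boundary discontinuity at $s=t$. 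Specializing to $f\equiv 1$ and $f\equiv\rho$ and combining with the analogous treatment of $\int_0^t\bbG^{2,n}_s f_s\,ds$ (which produces the $\beta_{1,s}\sigma_{2,s}$ contribution) yields both announced limits.

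The main obstacle is this last passage to the limit: the integrand contains the random function $\beta_2$, so weak convergence against deterministic continuous test functions does not directly apply. The independence of $\Pi$ and $\mB$ is exactly what makes the conditioning argument clean; without it one would need joint weak convergence of $\calv^{I\!,J}_n$ with the diffusion, which the hypotheses do not provide directly. A minor technicality is the sharp cut-off $\1_{[0,t]}(s)$, but the absolute continuity of $\calv^{I\!,J}$ makes the single point $s=t$ a null set and allows a standard mollification to justify the limit.
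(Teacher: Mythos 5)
Your proposal is correct and follows essentially the same route as the paper's proof: the same decomposition $\langle N^n,B_1\rangle_t=\int_0^t(\bbG^{1,n}_s+\bbG^{2,n}_s\rho_s)\,ds$, the same observation that the two summands in $\bbG^{1,n}$ recombine into $|J^j|\beta_{2,\cdot}$ up to an increment of $\beta_2$ of order $r_n^{1/2}$ in $L^2$, the same $O(r_n^{3/2}/b_n)=o_p(1)$ error bound, and the same appeal to Assumption \textsf{P1} for the main term written against $\calv^{I\!,J}_n$. Your explicit conditioning on $\mB$ to handle the random integrand is a welcome refinement of a step the paper leaves implicit, but it does not change the argument.
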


This lemma describes the parts of the limit of $N^n$ that can be described or explained by 
$B_1$ and $B_2$. This is however not enough. One also needs to evaluate the limiting quadratic variation of the process $N^n$.


\begin{lemma}\label{lem3.10} If Assumption \textsf{P2} is fulfilled, then for every $t\in[0,T]$, we have
\begin{align*}
\langle N^n,N^n\rangle_t&\xrightarrow[n\to\infty]{P}
\int_0^t \beta_{2}^2\sigma_{1}^2\,d\calv^{I\!,J,J'}
+\int_0^t \beta_{1}^2\sigma_{2}^2\,d\calv^{I,I'\!\!,J}
+2\int_{0}^t \beta_{2}\beta_{1}\sigma_{1}\sigma_{2}\rho\,d\calv^{J(I),I(J),I\cap J}.
\end{align*}
\end{lemma}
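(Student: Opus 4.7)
The plan is to compute the quadratic variation directly from its definition,
\begin{align*}
\langle N^n,N^n\rangle_t = \int_0^t\big[(\bbG^{1,n}_s)^2+2\rho_s\bbG^{1,n}_s\bbG^{2,n}_s+(\bbG^{2,n}_s)^2\big]\,ds,
\end{align*}
identify each of the three integrals as an integral against $\calv^{I\!,J,J'}_n$, $\calv^{I,I'\!,J}_n$, and $\calv^{J(I),I(J),I\cap J}_n$, respectively, and then pass to the limit using Assumption \textsf{P2}.

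The first step is to simplify $\bbG^{1,n}_s$ pointwise. Since the $I^i$'s partition $[0,T]$, for each $s$ there is a unique index $i(s)$ with $s\in I^{i(s)}$. A case analysis on the position of $J^j$ relative to $s$, combined with the identity $(T^j-T^{j-1}\vee s)_+=|J^j\cap(s,T]|$, yields
\begin{align*}
\bbG^{1,n}_s=b_n^{-1}\sigma_{1,s}\Big[\int_{U(i(s))\cap(0,s]}\!\!\beta_{2,u}\,du+\beta_{2,S^{i(s)-1}}\,|U(i(s))\cap(s,T]|\Big],
\end{align*}
where $U(i):=\bigcup_{j\,:\,K_{ij}=1}J^j$. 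Both $S^{i(s)-1}$ and the whole set $U(i(s))$ lie within $O(r_n)$ of $s$, so the continuity of $\beta_2$ collapses this into $\bbG^{1,n}_s=b_n^{-1}\sigma_{1,s}\beta_{2,s}|U(i(s))|+\epsilon^{1,n}_s$ with a controlled remainder. A symmetric analysis gives $\bbG^{2,n}_s=b_n^{-1}\sigma_{2,s}\beta_{1,s}|V(j(s))|+\epsilon^{2,n}_s$ with $V(j):=\bigcup_{i\,:\,K_{ij}=1}I^i$.

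The second step is to substitute these approximations into the three integrals. The key observation is that, for any integrable function $f(u)$ of the first coordinate,
\begin{align*}
\int f(u)\,d\calv^{I\!,J,J'}_n(u,v,w)=b_n^{-2}\sum_i|U(i)|^2\int_{I^i}f(u)\,du,
\end{align*}
which reproduces the leading part of $\int_0^t(\bbG^{1,n}_s)^2\,ds$ with $f(u)=\mathbf{1}_{u\le t}\sigma_{1,u}^2\beta_{2,u}^2$. An analogous identification handles the second term via $\calv^{I,I'\!,J}_n$. For the cross term, the slice of $\bigcup_{I\!,J}J(I)\times I(J)\times(I\cap J)$ at a fixed third coordinate $w$ is exactly $J(i(w))\times I(j(w))$, of Lebesgue measure $|U(i(w))|\,|V(j(w))|$; thus any function $\tilde f(w)$ integrates to $b_n^{-2}\int_0^T\tilde f(w)|U(i(w))|\,|V(j(w))|\,dw$, matching $2\int_0^t\rho_s\bbG^{1,n}_s\bbG^{2,n}_s\,ds$ for $\tilde f(w)=\mathbf{1}_{w\le t}\rho_w\sigma_{1,w}\sigma_{2,w}\beta_{1,w}\beta_{2,w}$.

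Finally, Assumption \textsf{P2} delivers the weak convergence in probability of each of the three random measures, and since the integrands are continuous, the three contributions combine into the formula asserted in the lemma. The main obstacle is the careful bookkeeping of the errors $\epsilon^{1,n}_s$ and $\epsilon^{2,n}_s$: one must replace $\beta_{2,u}$ by $\beta_{2,s}$ (and analogously $\beta_{1,u}$ by $\beta_{1,s}$) uniformly on the sets $U(i(s))$ and $V(j(s))$ of diameter $O(r_n)$, and then verify that after squaring and integrating in $s$ the cumulative error is $o_p(1)$. This is where the regularity of the drift processes $\beta^{[0]}_i,\beta^{[1]}_{ij}$ together with a Burkholder--Davis--Gundy-type maximal inequality enters, supported by a condition of the flavour $r_n^3=o_p(b_n^2)$ inherited from the hypotheses of the surrounding lemmas.
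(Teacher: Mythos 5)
Your proposal is correct and follows essentially the same route as the paper: expand $\langle N^n,N^n\rangle_t$ into the three integrals of $(\bbG^{1,n})^2$, $2\rho\,\bbG^{1,n}\bbG^{2,n}$ and $(\bbG^{2,n})^2$, identify each (up to a remainder controlled via the semimartingale regularity of the drifts) with an integral against $\calv^{I\!,J,J'}_n$, $\calv^{I,I'\!\!,J}_n$ and $\calv^{J(I),I(J),I\cap J}_n$, and conclude by Assumption \textsf{P2}. The only cosmetic difference is that you collapse $\beta_{2,u}$ to $\beta_{2,s}$ on the $O(r_n)$-neighbourhoods before identifying the measures, whereas the paper keeps the integrand as a function on $[0,t]^3$ and lets the diagonal concentration of the limit measures do that collapse.
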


The last step before stating the main result on the convergence of the processes involved in the
stochastic decomposition presented in Lemma~\ref{lem-decomp} is the proof of the convergence of
the bounded variation process $A^n$. Recall that the latter is defined by
\begin{align*}
A^n&= b_n^{-1}\sum_{I\!,J}\> \KIJ \Bigl\{
J\{[I\sigma_1(\beta^{[1]}_{21}+\beta^{[1]}_{22}\rho)]\cdot s\}
+I\{[J\sigma_2(\beta^{[1]}_{11}\rho+\beta^{[1]}_{12})]\cdot s \}\Bigr\}\cdot t
\\
&+ b_n^{-1}\sum_{I\!,J}\>\KIJ \{(I\beta_1)\cdot t\} \times
\{(J\beta_2)\cdot t\}.
\end{align*}
Obviously, it can be written as $A^{n}_t=A^{1,n}_t+A^{2,n}_t$, where
\begin{align*}
A^{1,n}_t&=b_n^{-1}\sum_{I\!,J} \KIJ\int_I\int_J \Big\{\sigma_{1,u}(\beta^{[1]}_{21,u}+\beta^{[1]}_{22,u}\rho_u)+
\sigma_{2,s}(\beta^{[1]}_{11,s}\rho_s+\beta^{[1]}_{12,s})\Big\}\1_{\{u\le s\le t\}}\,du\,ds\\
A^{2,n}_t&=b_n^{-1}\sum_{I\!,J} \KIJ\int_I\int_J \beta_{1,u}\beta_{2,s}\1_{\{u\vee s\le t\}}\,du\,ds=\int_{[0,t]^2}
\beta_{1,u}\beta_{2,s}\,\calv_{n}^{I\!,J}(du,ds).
\end{align*}
Using Assumption \textsf{P1} and the fact that the measures $\calv_{n}^{I\!,J}$ are concentrated on the diagonal of
the square $[0,t]^2$, we get $A^n_t=A^\infty_t+o_p(1)$ with
\begin{align}\label{An}
A^\infty_t&=\frac12\int_0^t\{\sigma_{1,u}(\beta^{[1]}_{21,u}+\beta^{[1]}_{22,u}\rho_u)+
\sigma_{2,u}(\beta^{[1]}_{11,u}\rho_u+\beta^{[1]}_{12,u})+2\beta_{1,u}\beta_{2,u}\}\calv^{I\!,J}(du).
\end{align}

\begin{proposition}\label{propWC} Assume that the functions $\sigma_1$, $\sigma_2$ and $\rho$ are continuous in $[0,T]$
and that $\sup_{t\in[0,T]} \Ex[(\beta_{ij}^{[\ell-1]})^4]<\infty$ for every $i,j,\ell\in\{1,2\}$. If assumptions
P and P1 are fulfilled, then the sequence of two dimensional processes $(M^n,N^n+A^n)$ converges weakly
to a process $(M^\infty,N^\infty+A^\infty)$. Furthermore, $N^\infty+A^\infty$ is independent of $M^\infty$.
\end{proposition}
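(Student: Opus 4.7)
The plan is to apply a functional martingale central limit theorem (e.g., Jacod--Shiryaev, Theorem IX.7.28) to the four-dimensional continuous semimartingale $(B_1,B_2,M^n,N^n)$, and then transfer the conclusion to $(M^n,N^n+A^n)$ by adding the deterministic (in the limit) perturbation $A^n=A^\infty+o_p(1)$ guaranteed by Assumption \textsf{P1}.

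First, I would establish tightness of $(B_1,B_2,M^n,N^n)$ in $\mathbb{D}([0,T];\mathbb{R}^4)$. Since $(B_1,B_2)$ is fixed and $M^n,N^n$ are continuous local martingales, it suffices to verify the Aldous--Rebolledo criterion for $(M^n,N^n)$: by Lemmas~\ref{lem8} and~\ref{lem3.10}, the quadratic variations $\langle M^n,M^n\rangle_T$ and $\langle N^n,N^n\rangle_T$ are bounded in probability, and their increments between nearby stopping times vanish uniformly in probability by the absolute continuity of the limiting measures $\calv^{\cdot}$. Tightness of $(A^n)$ follows at once from $A^n\to A^\infty$ in probability.

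Second, I would identify the limit through convergence of predictable brackets. Lemmas~\ref{lem5.6},~\ref{lem8},~\ref{lem3.8},~\ref{lem3.9} and~\ref{lem3.10} together show that every pairwise bracket of the continuous semimartingale vector $(B_1,B_2,M^n,N^n)$ converges in probability to a \emph{deterministic} limit. Since all components are continuous (hence the Lindeberg/negligibility-of-jumps hypothesis is automatic), the continuous-martingale CLT yields convergence in law of $(B_1,B_2,M^n,N^n)$ to a continuous Gaussian martingale $(B_1,B_2,M^\infty,N^\infty)$ with the prescribed covariation structure. Coupled with $A^n\to A^\infty$, a Slutsky-type argument then delivers the weak convergence of $(M^n,N^n+A^n)$ to $(M^\infty,N^\infty+A^\infty)$.

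The independence claim is then purely algebraic. By Lemmas~\ref{lem5.6} and~\ref{lem3.8}, the limiting brackets $\langle M^\infty,B_\nu\rangle$ for $\nu=1,2$ and $\langle M^\infty,N^\infty\rangle$ all vanish. In a jointly Gaussian setting, vanishing covariation forces independence, so $M^\infty$ is independent of $(B_1,B_2,N^\infty)$. The process $A^\infty$ defined by~(\ref{An}) is a measurable functional of $\mB$ (via the progressively measurable $\beta$'s and $\beta^{[1]}$'s) together with the deterministic measure $\calv^{I\!,J}$, hence it is $\sigma(\mB)$-measurable and independent of $M^\infty$. Combining these, $M^\infty$ is independent of $N^\infty+A^\infty$.

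The main obstacle I anticipate is making the identification of $N^\infty$ as a $\sigma(\mB)$-measurable object rigorous, rather than a Gaussian martingale living on an abstract extension. The natural route is to use \emph{stable} rather than weak convergence: since $N^n$ is by definition the stochastic integral $\bbG^{1,n}\cdot B_1+\bbG^{2,n}\cdot B_2$, one uses the convergence of $\langle N^n,B_\nu\rangle$ (Lemma~\ref{lem3.9}) together with $\langle N^n,N^n\rangle$ (Lemma~\ref{lem3.10}) to extract $\mB$-adapted integrands $H^{1,\infty},H^{2,\infty}$ such that $N^\infty=H^{1,\infty}\cdot B_1+H^{2,\infty}\cdot B_2$, verifying along the way that the Kunita--Watanabe residual orthogonal to $\mB$ contributes nothing to the limiting quadratic variation. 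Once this is secured, $N^\infty+A^\infty$ is $\sigma(\mB)$-measurable and the independence assertion is immediate from the orthogonality of $M^\infty$ to $\mB$.
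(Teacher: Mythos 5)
There is a genuine gap, and it sits exactly where your argument needs to be most careful: the characterization of the limit and the independence claim. You assert that every pairwise bracket of $(B_1,B_2,M^n,N^n)$ converges in probability to a \emph{deterministic} limit and conclude that the limit is a jointly Gaussian martingale, so that vanishing covariation forces independence. But Lemmas~\ref{lem3.9} and~\ref{lem3.10} give \emph{random} limits for $\langle N^n,B_\nu\rangle$ and $\langle N^n,N^n\rangle$: they involve the progressively measurable drifts $\beta_{1},\beta_{2}$, which are in general nonlinear functionals of $\mB$. Likewise $A^\infty$ in (\ref{An}) is random, not deterministic. Consequently the limit of $(B_1,B_2,M^n,N^n)$ is not jointly Gaussian, the ``zero covariation $\Rightarrow$ independence'' step is unavailable, and the plain Slutsky argument for attaching $A^n$ must be upgraded to a \emph{stable} convergence argument. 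Your closing paragraph senses this, but the repair you propose --- showing that $N^\infty$ is $\sigma(\mB)$-measurable by checking that the Kunita--Watanabe residual orthogonal to $\mB$ contributes nothing to the limiting quadratic variation --- would fail: comparing Lemma~\ref{lem3.10} with Lemma~\ref{lem3.9}, the limiting $\langle N^n,N^n\rangle$ generically exceeds the part explained by the projections onto $B_1,B_2$, so a nontrivial orthogonal residual survives in the limit.

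The paper's route handles both difficulties at once: it applies Theorem 2-1 of \cite{Jac97} to $Z^n=(M^n,N^n+A^n)^\T$ with $\mB$ as the reference martingale, obtaining stable convergence to a limit realized on an extension of the probability space carrying a two-dimensional Brownian motion $\tilde\mB=(\tilde B_1,\tilde B_2)$ independent of $\mB$. Because $\langle M^n,B_\nu\rangle\to0$, $\langle M^n,N^n\rangle\to0$ and $\langle M^n,M^n\rangle$ has a deterministic limit, the representation can be taken diagonal in the $\tilde\mB$-part: $M^\infty=\int_0^\cdot\mathfrak m_s\,d\tilde B_{1,s}$ with deterministic $\mathfrak m$, while $N^\infty+A^\infty$ is measurable with respect to $\sigma(\mB,\tilde B_2)$ (its $\mB$-driven part carrying the random integrands $\beta_{i}\sigma_{j}$ and its residual carried by $\tilde B_2$). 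Independence then follows from the independence of $\tilde B_1$ from $(\mB,\tilde B_2)$ --- not from Gaussianity of the joint limit. To salvage your proof you would need to replace the martingale CLT with deterministic brackets by this conditional/stable version and rework the independence argument along these lines.
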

\begin{proof}
We already did the major part of the proof by showing the convergence in probability of the sequences
of quadratic variations-covariations and that of $A^n_t$. Now, if we apply Theorem 2-1 from \cite{Jac97} to
the semimartingale $Z^n=(M^n,N^n+A^n)^\T$ with $\mB$ serving as a martingale of reference (denoted by $M^n$
in \cite{Jac97}), we obtain the weak convergence of $Z^n$ to a process $Z$. Moreover, it follows from (ii)
of the aforementioned theorem that $Z$ may be constructed on an enlargement of the original probability space
on which there is a two-dimensional Brownian  motion $\tilde\mB$ independent of $\mB$  such that
$$
Z_t=
\begin{pmatrix}
0\\
A^\infty_t
\end{pmatrix}+\int_0^t
\frac{d\calv^{I\!,J}}{dt}(s)
\begin{pmatrix}
0 & 0\\
\beta_{2,s}\sigma_{1,s} & \beta_{1,s}\sigma_{2,s}
\end{pmatrix}
\,d\mB_s+\int_0^t
\begin{pmatrix}
\mathfrak m_s & 0\\
0&\mathfrak w_s
\end{pmatrix}
d\tilde\mB_s,
$$
where
$$
\mathfrak m_s^2=h_s^2\,\Big\{\frac{d\calv^{I}}{ds}+\frac{d\calv^{J}}{ds}-\frac{d\calv^{I\cap
J}}{ds}\Big\}+\sigma_{1,s}^2\sigma_{2,s}^2\,\frac{d\calv^{I\!,J}}{ds}
$$
stands for the Radon-Nikodym derivative of $\lim_{n\to\infty}\langle M^n,M^n\rangle_t$ with respect to the Lebesgue
measure (cf.\ Lemma~\ref{lem8}) and $\mathfrak w_s$ is a predictable process (hence independent of $\tilde\mB$). If we denote
$(M^\infty,N^\infty)=Z^\T-(0, A^\infty)$, we get $M^\infty_t=\int_0^t\mathfrak m_s\,d\tilde B_{1,s}$ and $N^\infty_t=\int_0^t\mathfrak n_{1,s}\,dB_{1,s}+\int_0^t\mathfrak n_{2,s}\,dB_{2,s}+\int_0^t\mathfrak w_{1,s}\,d\tilde B_{2,s}$
with a predictable process $\mathfrak n_s=(\mathfrak n_1,\mathfrak n_2)$, and the assertion of the proposition follows.
\end{proof}
This result implies in particular that $\Ex[N^\infty_t+A^\infty_t |M^\infty_t]=\Ex[N^\infty_t+A^\infty_t]=\Ex[A^\infty_t]$ for every $t\in[0,T]$.
Therefore, using (\ref{An}), we get
\begin{align*}
{\sf A}&= \Ex[N^\infty_T+A^\infty_T|M^\infty_T]\\
&=\frac12\int_0^T \{\sigma_{1,u}\Ex(\beta^{[1]}_{21,u}
+\beta^{[1]}_{22,u}\rho_u)+\sigma_{2,u}\Ex(\beta^{[1]}_{11,u}\rho_u+\beta^{[1]}_{12,u})+2\Ex[\beta_{1,u}\beta_{2,u}]\}\calv^{I\!,J}(du).
\end{align*}
As we see in the next section, this expression of ${\sf A}$ appears in the asymptotic expansion of
the distribution function of $b_n^{-1/2}(\hat\theta_n-\theta)$. 

\section{Expansion of the distribution for a model with drift terms}\label{071219-3}

The aim of this section is to obtain an asymptotic expansion for the distribution of the HY-estimator
in the case where the diffusions $X_1$ and $X_2$ have non-zero drifts. As shows the stochastic expansion
of $\hat\theta_n$ obtained in Lemma~\ref{lem-decomp}, the main term in the expansion of $b_n^{-1/2}(\hat\theta_n-\theta)$ is independent of the drifts. Therefore, asymptotic expansions for
its distribution are already obtained in Sections~\ref{asy.exp.dist} and \ref{Sec4}. This indicates that
the influence of the drifts on the distribution of $\hat\theta_n$ can be regarded as a small perturbation
of the distribution in the case where there is no drift. Before stating the main result of this section,
let us give a theorem that allows to derive the second-order expansion of the distribution of a random
variable defined on the Wiener space in presence of a random perturbation.

\subsection{Perturbation}

Since the drift terms are possibly non-linear functionals of the Brownian motion $\mB$,
we need the Malliavin calculus to carry out computations on the infinite-dimensional
Gaussian space.

The basis of our arguments is a perturbation method for deriving asymptotic expansion.
It was used in \cite{Yos97} for the perturbation of
a martingale but the proof was written inseparably from the martingale structure.
In order to apply this methodology to the present situation,
we will begin with generalizing Theorem 2.1 of Sakamoto and Yoshida \cite{Sak-Yos02}.

We consider a probability space equipped with a differential calculus
in Malliavin's sense, an integration-by-parts formula and
the Sobolev spaces $\mathbb D_{p,\ell}$ equipped with the norm $\|\cdot\|_{p,\ell}$. For
positive numbers $M$ and $\gamma$, let $\cal{E}(M,\gamma)$ be the set of all measurable
functions $f:\bbR^d\rightarrow\bbR$ satisfying $|f(x)|\leq
M(1+|x|^\gamma)$ for all $x\in\bbR^d$. Let $\cale'$ be a subset of $\cal{E}(M,\gamma)$.

Let $\calx_n$ and $\caly_n$ be $\bbR^d$-valued Wiener functionals and put
\begin{align*}
\calz_n=\calx_n+s_n\caly_n
\end{align*}
for some sequence of positive numbers $s_n$ tending to $0$ as
$n\iku\infty$. We write $G_n(f)=\bar{o}(s_n)$ if
$s_n^{-1}\sup_{f\in\cale}|G_n(f)|\iku0$ as $n\iku\infty$.

\begin{theorem}\label{ThmSY}
Let $\ell$ be an integer such that $\ell>d+2$.
Suppose that the following conditions are satisfied:
\begin{itemize}
\item[(1)]
$\sup_n ||\calx_n||_{p,\ell}+\sup_n||\caly_n||_{p,\ell}<\infty$
for any $p>1$,
\item[(2)] $(\calx_n, \caly_n) \stackrel{D}{\to} (\calx_\infty, \caly_\infty)$
for some random variables $\calx_\infty$ and $\caly_\infty$.
\end{itemize}
In addition, assume that there exists a functional $\tau_n$ such that
\begin{itemize}
\item[(3)] $\sup_n ||\tau_n||_{p,\ell-1}<\infty$ for any $p>1$.
\item[(4)] $\P[|\tau_n|>1/2]=o(s_n^\alpha)$ for some $\alpha>1$.
\item[(5)] $\sup_n \E[1_{\{|\tau_n|<1\}}
(\det\sigma_{\calx_n})^{-p}]<\infty$ for any $p>1$.
\item[(6)]
There is a sequence of signed measures $\Psi_n$ on $\bbB_d$ such that
for any positive numbers $M$ and $\gamma$,
$\Ex[f(\calx_n)]=\Psi_n[f]+\bar{o}(s_n)$ as $n\iku\infty$
for $f\in\cale'$.
Moreover, for every 
polynomial $\pi(x)$ in $x$,
there exists a constant 
$c_\pi$
such that
$|\Psi_n[e^{{\ii}u\cdot x}\pi(x)]|\leq c_\pi (1+|u|^{\ell-1})^{-1}$
for all $u\in\bbR^d$.
\end{itemize}
Then $\calx_\infty$ has a density $p^{\calx_\infty}$ with respect to the Lebesgue measure and,  for any positive numbers $M$ and $\gamma$,
\begin{equation}
\Ex[f(\calz_n)]=\Psi_n[f]+s_n \int_{\bbR} f(x) g_\infty(x)\>dx
+\bar{o}(s_n)
\end{equation}
for $f \in \cal{E}'$, where $g_\infty(x)=-\mbox{{\rm div}}_x \bigl(\E[\caly_\infty\ | \
\calx_\infty=x ]\,p^{\calx_\infty}(x) \bigr)$.
\end{theorem}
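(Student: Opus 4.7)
\emph{Proof plan.} I will follow the characteristic-function–plus–Babu--Singh inversion scheme already used in the proof of Theorem~\ref{191214-1}, upgraded with Malliavin calculus integration by parts to handle the infinite-dimensional Gaussian structure. The strategy decomposes naturally into localization, Taylor expansion, identification of the limit, and uniform-in-$u$ decay of Fourier transforms.

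\smallskip
\noindent\emph{Step 1: Localization.} Pick a smooth cutoff $\psi\in C^\infty_c(\bbR)$ with $\psi\equiv 1$ on $[-\tfrac12,\tfrac12]$ and $\mathrm{supp}(\psi)\subset[-1,1]$. A H\"older inequality combined with the uniform $L^p$-bounds of (1) and (3), the polynomial growth of $f\in\cale'$, and condition~(4) with exponent $\alpha>1$ yields
\begin{align*}
\bigl|\Ex[f(\calz_n)]-\Ex[f(\calz_n)\psi(\tau_n)]\bigr|+\bigl|\Ex[f(\calx_n)]-\Ex[f(\calx_n)\psi(\tau_n)]\bigr|=o(s_n),
\end{align*}
uniformly over $f\in\cale'$. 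Invoking (6), it is enough to prove, uniformly in $f\in\cale'$,
\begin{align*}
\Ex[f(\calz_n)\psi(\tau_n)]-\Ex[f(\calx_n)\psi(\tau_n)]=s_n\!\int_{\bbR^d}\!f(x)g_\infty(x)\,dx+\bar{o}(s_n).
\end{align*}

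\smallskip
\noindent\emph{Step 2: Taylor expansion and identification of the limit.} Following Section~\ref{SSec5}, Babu--Singh inversion reduces the above to controlling, for every multi-index $|\beta|\le \ell-1$, the derivatives $\partial_u^\beta\hat\mu_n(u)$ of the Fourier transform of the signed measure
\begin{align*}
\mu_n[A]=\Ex[\1_A(\calz_n)\psi(\tau_n)]-\Ex[\1_A(\calx_n)\psi(\tau_n)]-s_n\!\int_A g_\infty(x)\,dx.
\end{align*}
A first-order Taylor expansion of $e^{\ii u\cdot\calz_n}$ around $\calx_n$ gives
\begin{align*}
\Ex[e^{\ii u\cdot\calz_n}\psi(\tau_n)]-\Ex[e^{\ii u\cdot\calx_n}\psi(\tau_n)]=s_n\!\int_0^1\!\Ex\bigl[\ii u\cdot\caly_n\,e^{\ii u\cdot(\calx_n+ts_n\caly_n)}\psi(\tau_n)\bigr]\,dt.
\end{align*}
Condition~(2), with uniform integrability from (1), (3) and $\psi(\tau_n)\to1$ in probability, allows one to pass to the limit pointwise in~$u$ to obtain $\Ex[\ii u\cdot\caly_\infty\,e^{\ii u\cdot\calx_\infty}]$. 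Setting $h(x)=\Ex[\caly_\infty\mid\calx_\infty=x]$ and using the density $p^{\calx_\infty}$ of $\calx_\infty$ (produced in Step~4), a single $x$-space integration by parts identifies this limit with $\hat g_\infty(u)$, which yields the pointwise statement $s_n^{-1}\hat\mu_n(u)\to 0$.

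\smallskip
\noindent\emph{Step 3: Uniform decay by Malliavin IBP.} For the integrability of $\hat\mu_n$ required by the Babu--Singh inversion, perform Malliavin integration by parts on the truncated event: for every $|\alpha|\le\ell-1$,
\begin{align*}
(\ii u)^\alpha\Ex[e^{\ii u\cdot\calz_n}\psi(\tau_n)]=\Ex[e^{\ii u\cdot\calz_n}\Phi_{n,\alpha}],
\end{align*}
the weight $\Phi_{n,\alpha}$ being a Skorokhod weight built from $\psi(\tau_n)$, $\caly_n$, $\calx_n$ and the inverse covariance $\sigma_{\calz_n}^{-1}$. Since $s_n\caly_n$ is of order $s_n$ in every Malliavin--Sobolev norm by~(1), the covariance $\sigma_{\calz_n}$ is a small perturbation of $\sigma_{\calx_n}$, so (5) transfers and $\sigma_{\calz_n}^{-1}$ has uniform negative moments on $\{|\tau_n|<1\}$; hence $\sup_n\|\Phi_{n,\alpha}\|_{L^p}<\infty$ and $|\hat\mu_n(u)|\le C s_n(1+|u|)^{-(\ell-1)}$, which is integrable on $\bbR^d$ since $\ell>d+2$. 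The analogous decay for the $g_\infty$ contribution comes from~(6) and the smoothness of $g_\infty$ inherited from $p^{\calx_\infty}$.

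\smallskip
\noindent\emph{Step 4: Density of $\calx_\infty$ and main obstacle.} The existence of the density $p^{\calx_\infty}$ follows by applying the same IBP to $\calx_n$ alone, yielding $|\Ex[e^{\ii u\cdot\calx_n}\psi(\tau_n)]|\le C(1+|u|)^{-(\ell-1)}$ uniformly in $n$; combining with (2) and $\psi(\tau_n)\to 1$, Fourier inversion produces a bounded smooth density for $\calx_\infty$. Applying the same argument jointly to $(\calx_n,\caly_n)$ gives the smoothness of $x\mapsto h(x)p^{\calx_\infty}(x)$ needed to make sense of $\mathrm{div}_x(h\,p^{\calx_\infty})$ in the definition of $g_\infty$. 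The main obstacle is the Malliavin IBP with a non-smooth truncation: one must both propagate the non-degeneracy from $\sigma_{\calx_n}$ to $\sigma_{\calz_n}$ and control the $L^p$-moments of the Skorokhod weights $\Phi_{n,\alpha}$ uniformly in $n$; this is where the Sobolev order $\ell>d+2$ is critical, as IBP must be iterated $\ell-1$ times to make $(1+|u|)^{-(\ell-1)}$ integrable in $\bbR^d$.
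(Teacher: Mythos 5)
Your overall architecture (truncate, Taylor-expand $e^{\ii u\cdot\calz_n}$ around $\calx_n$ to first order in $s_n$, identify the limit of the first-order term via condition (2) and an $x$-space integration by parts, and use Malliavin integration by parts to get the $(1+|u|)^{-(\ell-1)}$ decay needed for Fourier inversion) matches the paper's. But there is a genuine gap exactly at the point you flag as the main obstacle: the transfer of non-degeneracy from $\calx_n$ to $\calz_n$. You assert that because $s_n\caly_n$ is $O(s_n)$ in every Sobolev norm, condition (5) ``transfers'' and $\det\sigma_{\calz_n}$ has uniform negative moments on $\{|\tau_n|<1\}$. This does not follow: writing $\det\sigma_{\calz_n}\ge\det\sigma_{\calx_n}\det\bigl[I_d+s_n\sigma_{\calx_n}^{-1}(\langle\calx_n,\caly_n\rangle+\langle\caly_n,\calx_n\rangle)\bigr]$, the relative perturbation is controlled only in $L^p$, not pointwise, so inside $\{|\tau_n|<1\}$ there is an event of small but positive probability on which $\sigma_{\calz_n}$ can be nearly degenerate and $(\det\sigma_{\calz_n})^{-p}$ need not be integrable; smallness of the probability of the bad set does not rescue a possibly non-integrable integrand. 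The paper closes this by enlarging the truncation: it writes $\det\bigl[I_d+s_n\sigma_{\calx_n}^{-1}(\cdots)\bigr]=1+s_n\det\sigma_{\calx_n}^{-d}K_n$ with $\sup_n\|K_n\|_{p,\ell-1}<\infty$ and takes $\psi_n=\psi(\tau_n)\,\psi\bigl(2s_n\det\sigma_{\calx_n}^{-d}K_n\bigr)$, so that $\det\sigma_{\calz_n}\ge\frac12\det\sigma_{\calx_n}$ holds pointwise wherever $\psi_n>0$, while $\|1-\psi_n\|_q=\bar{o}(s_n)$ still follows from (4), (5) and Markov's inequality. Without this second cutoff your Step~3 weights $\Phi_{n,\alpha}$ have no uniform $L^p$ bound.

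A secondary issue: in Step 2 you invoke the Babu--Singh smoothing inequality, but that device needs a modulus-of-continuity control on $f$ (membership in a class like $\cale^0$), which is not assumed here --- $\cale'$ is an arbitrary subset of $\cale(M,\gamma)$. The paper instead multiplies by the polynomial weight $\zeta(x)=1+|x|^{2m}$ with $2m>\gamma+d$, sets $\varphi=f/\zeta$ (which is integrable), and compares $\int f\tilde{p}_n$ with $\int f h_n^0$ directly by Fourier inversion of $\E[e^{\ii u\cdot\calz_n}\psi_n\zeta(\calz_n)]$, splitting the $u$-integral over $\{|u|\le s_n^{-1}\}$ and its complement. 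Some such weight is needed to integrate a polynomially growing $f$ against a density that is only known to be bounded, and its introduction is what produces the additional terms $C(n)$ and $D(n)$ in the paper's decomposition, which your plan has no counterpart for.
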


\subsection{Asymptotic expansion of the distribution}\label{sec6.2}
We are now in a position to state and to prove the main result of this section,
which provides an unconditional asymptotic expansion of the distribution of the
HY-estimator. It is also possible to derive asymptotic expansions conditionally
to the processes generating the sampling times, but they have more complicated
form and are not presented here.

\begin{theorem}\label{200325-20}
Suppose that Assumptions {\sf P1}  and {\sf P2} are satisfied and
$$
\sup_{t\in[0,T]}\|\beta^{[l-1]}_{i,t}\|_{p,4}<\infty,\quad  \text{for all}\quad p>1\quad \text{and}\quad i,l\in\{1,2\}.
$$
Let us define
\begin{align*}
\mfc&=\int_0^T\sigma_{1,t}^2\sigma_{2,t}^2\calv^{I\!,J}(dt)+\int_0^T\sigma_{1,t}\sigma_{2,t}\rho_t\big\{\calv^{I}(dt)+\calv^{I}(dt)-\calv^{I\cap J}(dt)\big\},\\
{\sf A}&= \frac12\int_0^T \{\sigma_{1,u}\Ex(\beta^{[1]}_{21,u}
+\beta^{[1]}_{22,u}\rho_u)+\sigma_{2,u}\Ex(\beta^{[1]}_{11,u}\rho_u+\beta^{[1]}_{12,u})+2\Ex[\beta_{1,u}\beta_{2,u}]\}\calv^{I\!,J}(du).
\end{align*}
Under the notation of Theorem~\ref{191214-1}, if for some $a\in(3/4,1)$\/,  $\Pb(A_n(a)^c)=o(b_n^{p})$ for every $p>1$, and  $\Ex[2\mu_{2,n}-\mfc]=O(b_n^{2a-1})$,
then
\begin{equation}\label{ae5}
\sup_{f\in\cale(M,\gamma)\cap\cale^0({\sf C},\eta,r_0,\mfc^*)} \l| \>\Ex[f(b_n^{-1/2}(\hat\theta_n-\theta))]-\int_\bbR f(z)\,p_n^*(z)\,dz\>\r|=o(b_n^{1/2}),
\end{equation}
where
\begin{align*}
p_{n}^*(z)&=\frac{e^{-z^2/(2\mfc)}}{\sqrt{2\pi\mfc}}\Bigl[1
+\frac{b_n^{1/2}}{6\mfc^3}\big(\Ex[\bar{\lambda}_{3,n}](z^3-3\mfc z)+6{\sf A}\mfc^2z\big)\Bigr].
\end{align*}
Moreover, if $\sup_{n\in\bbN}\Ex[\bar{\lambda}_{3,n}]<\infty$, then
inequality (\ref{ae5}) holds with $p_n^*$ replaced by
$$
p_n^+(z)=\frac{\max(0,p_n^*(z))}{\int_\bbR \max(0,p_n^*(u))\,du}\ ,
$$
which is a probability density.
\end{theorem}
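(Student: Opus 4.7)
The strategy is to apply the Malliavin perturbation theorem (Theorem~\ref{ThmSY}) to the stochastic decomposition furnished by Lemma~\ref{lem-decomp}. Setting $\calx_n=M^n_T$, $\caly_n=N^n_T+A^n_T$, and $s_n=b_n^{1/2}$, one has $b_n^{-1/2}(\hat\theta_n-\theta)=\calz_n+\mathfrak r_n$, where $\calz_n=\calx_n+s_n\caly_n$ and the remainder satisfies $\mathfrak r_n=o_p(b_n^{1/2})$. My first task would be to upgrade this to uniform $\bar o(s_n)$ control over $f\in\cale^*$: combining the $\cale^0$-modulus bound
$$
|\Ex[f(\calz_n+\mathfrak r_n)]-\Ex[f(\calz_n)]|\le \Ex[\bar\omega_f(\calz_n,|\mathfrak r_n|)]+ 2M\,\Ex\bigl[(1+|\calz_n|^\gamma+|\mathfrak r_n|^\gamma)\mathbf{1}_{\{|\mathfrak r_n|>b_n^{1/2+\delta}\}}\bigr]
$$
with the hypothesis $\Pb(A_n(a)^c)=o(b_n^p)$ and the $L^p$-tightness of the remainder on $A_n(a)$ (which can be extracted from the proof of Lemma~\ref{lem-decomp}), both summands are $\bar o(s_n)$ uniformly in $f\in\cale^*$.

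Next, I would verify the hypotheses of Theorem~\ref{ThmSY}. Condition~(6), a second-order expansion of $\Ex[f(\calx_n)]$, is Theorem~\ref{ThmAE3} applied in the drift-free Gaussian model for which $M^n_T$ plays exactly the role of $b_n^{-1/2}(\hat\theta_n-\theta)$; the approximating signed measure is
$$
\Psi_n(dz)=\phi(z;\mfc)\Bigl[1+\tfrac{b_n^{1/2}}{6}\Ex[\bar\lambda_{3,n}]\,h_3(z;\mfc)\Bigr]dz,
$$
whose Fourier transform has the required polynomial decay in $u$. Condition~(2), the joint weak convergence of $(\calx_n,\caly_n)$, is exactly Proposition~\ref{propWC}. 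Condition~(1), uniform boundedness of Sobolev norms, follows from boundedness of $\sigma_i,\rho$ and the assumed $L^p$-boundedness of the $\beta^{[l-1]}_{i,t}$ by iterating BDG on the double and triple integrals defining $M^n$, $N^n$ and $A^n$.

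The main obstacle is the non-degeneracy conditions~(3)--(5). The Malliavin covariance $\sigma_{\calx_n}$ of $\calx_n=M^n_T=\bbH^{1,n}\cdot B_1+\bbH^{2,n}\cdot B_2$ can be computed explicitly; its leading part coincides (modulo corrections driven by the Malliavin derivatives of $\bbH^{c,n}$, which are of lower order) with the quadratic variation $\langle M^n,M^n\rangle_T$, which by Lemma~\ref{lem8} converges in probability to $\mfc>0$. I would set $\tau_n=\kappa(\mfc-\sigma_{\calx_n})$ for a small constant $\kappa>0$, so that $|\tau_n|<1/2$ forces $\sigma_{\calx_n}\ge\mfc/3$; the bound $\Pb(|\tau_n|>1/2)=o(s_n^\alpha)$ for some $\alpha>1$ then follows from the same exponential tail estimates that yield $\Pb(A_n(a)^c)=o(b_n^p)$ (aided by the Lipschitz assumptions on $\sigma_i,\rho$, as in Proposition~\ref{propo5}), while the Sobolev bound $\sup_n\|\tau_n\|_{p,\ell-1}<\infty$ reduces to iterated BDG for the $\bbH^{c,n}$ and their Malliavin derivatives.

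Finally, $g_\infty$ is computed as follows. By Proposition~\ref{propWC}, $\caly_\infty=N^\infty_T+A^\infty_T$ is independent of $\calx_\infty\sim\mathcal{N}(0,\mfc)$, and since $N^\infty$ is a continuous centered martingale,
$$
\Ex[\caly_\infty\mid\calx_\infty=x]=\Ex[N^\infty_T+A^\infty_T]={\sf A}.
$$
Therefore $g_\infty(x)=-\frac{d}{dx}\bigl({\sf A}\,\phi(x;\mfc)\bigr)=\frac{{\sf A}\,x}{\mfc}\phi(x;\mfc)$, and adding $s_n g_\infty$ to the density of $\Psi_n$, together with $h_3(z;\mfc)=(z^3-3\mfc z)/\mfc^3$, yields exactly the stated $p_n^*$. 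The density version $p_n^+$ follows as in the proof of Theorem~\ref{ThmAE3}: the set where $p_n^*<0$ lies outside an interval of the form $[-cb_n^{-1/6},cb_n^{-1/6}]$ on which $\phi(\cdot;\mfc)$ is already exponentially small, so the normalization constant differs from $1$ by an exponentially negligible quantity.
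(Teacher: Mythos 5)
Your overall architecture coincides with the paper's: apply Theorem~\ref{ThmSY} with $\calx_n=M^n_T$, get condition (6) from Theorem~\ref{ThmAE3}, condition (2) from Lemma~\ref{lem-decomp} and Proposition~\ref{propWC}, non-degeneracy from a truncation functional built around $\sigma_{\calx_n}\approx\mfc$, and the same computation of $g_\infty(x)=({\sf A}x/\mfc)\phi(x;\mfc)$ via the independence of $\caly_\infty$ and $\calx_\infty$ and the vanishing mean of $N^\infty_T$. The one point where you genuinely diverge is also where the proof breaks: you set $\caly_n=N^n_T+A^n_T$ and relegate the discrepancy $\mathfrak r_n=b_n^{-1/2}(\hat\theta_n-\theta)-\calx_n-s_n\caly_n$ to a separate estimate. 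Lemma~\ref{lem-decomp} only gives $\mathfrak r_n=o_p(b_n^{1/2})$, i.e.\ a smoothing scale $r\asymp b_n^{1/2}$ that cannot be shrunk, and the class $\cale^0({\sf C},\eta,r_0,\mfc^*)$ only controls $\int\bar{\omega}_f(z,r)\phi(z;\mfc^*)\,dz\le{\sf C}r^{\eta}$. Hence your bound $\Ex[\bar{\omega}_f(\calz_n,|\mathfrak r_n|)]$ is at best $O\big((b_n^{1/2})^{\eta}\big)$, which is \emph{not} $o(b_n^{1/2})$ once $\eta<1$; and the indicator term requires $\Pb(|\mathfrak r_n|>b_n^{1/2+\delta})=o(b_n^{1/2})$ for some $\delta>0$, which is strictly stronger than what Lemma~\ref{lem-decomp} provides. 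Even granting a polynomial rate $\delta>0$ for the remainder, you would only cover $\eta>1/(1+2\delta)$, not the full range claimed in the theorem. The paper avoids this entirely by taking $\caly_n=b_n^{-1/2}\big(b_n^{-1/2}(\hat\theta_n-\theta)-M^n_T\big)$ \emph{exactly}, so that $\calz_n=\calx_n+s_n\caly_n$ with no leftover; the $o_p(1)$ part of $\caly_n$ is then harmless because Theorem~\ref{ThmSY} only needs the weak limit of $(\calx_n,\caly_n)$ and uniform Sobolev bounds, and the smoothing is supplied internally by the non-degeneracy of $\calx_n$ rather than by the modulus of $f$. You should adopt this choice of $\caly_n$; everything else in your outline then goes through.

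Two smaller remarks. First, your truncation $\tau_n=\kappa(\mfc-\sigma_{\calx_n})$ works, but $\kappa$ must be bounded \emph{below} (e.g.\ $\kappa\ge 3/(4\mfc)$) for $|\tau_n|<1/2$ to force $\sigma_{\calx_n}\ge\mfc/3$; also, the explicit quadratic-form representation $\sigma_{\calx_n}=4b_n^{-1}\sum_\ell\lambda_{\ell,n}^2\zeta_{\ell,n}^2$ used in the paper is exact and spares you the ``lower-order corrections from $D\bbH^{c,n}$'' that your version of the covariance would have to control. Second, for condition (4) you should not lean on Proposition~\ref{propo5} or Lipschitz continuity of $\sigma_i,\rho$: Theorem~\ref{200325-20} is stated for general sampling schemes under the hypothesis $\Pb(A_n(a)^c)=o(b_n^p)$, and the required tail bound for $\tau_n$ follows from Tchebychev--Rosenthal applied conditionally on $\Pi$ together with that hypothesis, with no Poisson structure needed.
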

\begin{proof}
We apply Theorem~\ref{ThmSY} to $\calz_n=b_n^{-1/2}(\hat\theta_n-\theta)$ with $\ell=4$, $\calx_n=M^n_T$ and $\caly_n=b_n^{-1/2}(\calz_n-M^n_T)$.
Thus,  we need to check that all the 6 conditions of Theorem~\ref{ThmSY} are fulfilled. In view of Lemma~\ref{lem-decomp} and Proposition~\ref{propWC},
$(\calx_n,\caly_n)$ converges in distribution to some random vector $(\calx_\infty,\caly_\infty)$. Thus the second condition
of Theorem~\ref{ThmSY} is verified.

We have already seen in Section~\ref{SSec2} that the principal part
$\calx_n$ of $b_n^{-1/2}(\hat\theta_n-\theta)$ can be written in the
form
$
\calx_n=b_n^{-1/2}(\xxi^\T A\xxi-\theta)=b_n^{-1/2}\sum_{\ell=1}^{N}
\lambda_{\ell,n} (\zeta_{\ell,n}^2-1),
$
where
$$
\xxi=(\{I^1\sigma_1\cdot B_1\}_T,\ldots,\{I^{N_1}\sigma_1\cdot
B_1\}_T,\{J^1\sigma_2\cdot B_2\}_T,\ldots,\{J^{N_2}\sigma_2\cdot
B_2\}_T,)^\T\sim \caln_N(0,\Sigma)
$$
and the entries of the matrices $\Sigma$ and $A$ are given by
(\ref{sigma}) and (\ref{a}) respectively. Recall that the vector
$\zzeta\in\bbR^{N}$ is obtained as a linear transformation of $\xxi$
and is distributed according to $\caln(0,I)$.

Let $W=C_0([0,T],\bbR^2)$ be the Wiener space of continuous
functions from $[0,T]$ to $\bbR^2$ vanishing at the origin. Recall
that $W$ is a measurable space equipped with the Borel
$\sigma$-field induced by the uniform topology. The reference
measure on $W$ is the measure generated by the standard Wiener
process (in our case, the two-dimensional Brownian motion).

Let $w=(w_1,w_2)$ be the canonical process on $W$. Then, $(B_1,B_2)$
can be defined by
$$
B_{1,t}=w_{1,t},\qquad B_{2,t}=\int_0^t
\rho_s\,dw_{1,s}+\int_0^t\sqrt{1-\rho_s^2}\,dw_{2,s}.
$$
Obviously, for every $\ell=1,\ldots,N$, there is some function
$\phi^\ell\in L^{2}([0,T],\bbR^2)$ such that
$\zeta_{\ell,n}=\int_0^T \phi^\ell_{1,t}\,dw_{1,t}+\int_0^T
\phi^\ell_{2,t}\,dw_{2,t}:=w(\phi^\ell)$.

The process $w$ is an isonormal Gaussian process on
$H=L^2([0,T],\bbR^2)$ (see \cite[Def.\ 1.1.1]{Nual}) Using the
definition of the Malliavin derivative (see \cite[Def.\
1.2.1]{Nual}) and the chain rule~\cite[Prop.\ 1.2.3]{Nual}, we
get the following expression for the Malliavin derivative of
$\calx_n$:
$$
D_t\calx_n=2b_n^{-1/2}\sum_{\ell=1}^N\lambda_{\ell,n}\zeta_{\ell,n}\phi^\ell_t.
$$
Since the components of $\zzeta$ are non-correlated with variance
equal to one, the family $\{\phi^\ell\}_{\ell\le N}$ is
orthonormal. As a first consequence of this fact, we get that $\sup_n\|\calx_n\|_{p,4}<\infty$ for
every $p>1$. To show this, Rosenthal's inequality and the result of Lemma~\ref{071217-1} can be used.
As a second consequence, we obtain that	 the Malliavin covariance of $\calx_n$ is
\bea\label{071218-3}
\sigma_{\calx_n}=4b_n^{-1}\sum_{\ell=1}^n\lambda_{\ell,n}^2
\zeta_{\ell,n}^2=4b_n^{-1}\mu_{2,n}+4b_n^{-1}\sum_{\ell=1}^n\lambda_{\ell,n}^2
(\zeta_{\ell,n}^2-1).
\eea
Let us introduce the random variable $\tau_n$ that will play a role
of truncation:
$$
\tau_n=-\big(2-8\mu_{2,n}(\mfc b_n)^{-1}\big)_{+}+ 8(\mfc
b_n)^{-1}\sum_{\ell=1}^N \lambda_{\ell,n}^2(\zeta_{\ell,n}^2-1).
$$
In this notation, we have
$\sigma_{\calx_n}\ge \mfc+\frac{\mfc \tau_n}2$
and, therefore, $\1_{\{|\tau_n|<1\}}|\sigma_{\calx_n}^{-1}|<2/\mfc$.
Thus, the condition (5) of Theorem~\ref{ThmSY} is obviously
fulfilled. Let us check now that $\tau_n$ satisfies conditions (3)
and (4) of the aforementioned theorem.

To verify condition (3) of Theorem~\ref{ThmSY}, we remark that
$$
D\tau_n=16(\mfc b_n)^{-1}\sum_{\ell=1}^N
\lambda_{\ell,n}^2\zeta_{\ell,n}\phi^\ell,\quad D^2\tau_n=16(\mfc
b_n)^{-1}\sum_{\ell=1}^N \lambda_{\ell,n}^2\phi^\ell\otimes\phi^\ell
$$
$D^k\tau_n\equiv 0$ for every $k\ge 3$. Therefore,
$$
\|D\tau_n\|_{H}^2=256(\mfc b_n)^{-2}\sum_{\ell=1}^N
\lambda_{\ell,n}^4\zeta_{\ell,n}^2,\quad \|D^2\tau_n\|^2_{H\otimes
H}=256(\mfc b_n)^{-2}\sum_{\ell=1}^N \lambda_{\ell,n}^4.
$$
In view of the Rosenthal inequality, we get
$$
\Ex^\Pi[\|D\tau_n\|_{H}^{p}]\le
C(p)b_n^{-p}(\mu_{4,n}^{p/2}+\mu_{2p,n}+ \mu_{8,n}^{p/4}),
$$
for every $p\ge 2$. Using the definition of $\mu_{k,n}$, one can
check that $\mu_{2k,n}\le \mu_{4,n}^{k/2}$. In view of inequality
(\ref{boundmuk}) and the obvious bound $\mu_{2,n}\le C r_n$, we get
$$
\Ex^\Pi[\|D\tau_n\|_{H}^{p}]\le C b_n^{-p}r_n^{3p/2},\qquad
\Ex^\Pi[\|D^2\tau_n\|^p_{H\otimes H}]\le C b_n^{-p}r_n^{3p/2},\quad
\forall p\geq 4.
$$
Similar arguments yield
$$
\Ex[\tau_n^p]=\Ex^\Pi[\tau_n^p]\le C(1+ b_n^{-p}\Ex[r_n^{3p/2}])\le C(1+ b_n^{-p}b_n^{9p/8}+T^{3p/2}b_n^{-p}\Pb[A_n(a)^c])<\infty.
$$
To check condition (4) of Theorem~\ref{ThmSY}, we use the inequality
$$
\Pb(|\tau_n|>1/2)\le \Pb\big(2-8\mu_{2,n}(\mfc
b_n)^{-1}>0\big)+\Pb\bigg(8(\mfc b_n)^{-1}\Big|\sum_{\ell=1}^N
\lambda_{\ell,n}^2(\zeta_{\ell,n}^2-1)\Big|>1/2\bigg).
$$
On the one hand, since the event $\{2-8\mu_{2,n}(\mfc
b_n)^{-1}>0\}=\{\bar\lambda_{2,n}-\mfc< -\mfc/2\}$ is included in $A_n(a)^c$, its probability is $o(b_n^p)$ for every $p>1$. On the other hand,
combining the Tchebychev and the Rosenthal inequalities, for every $k\ge 16$ we get
\begin{align*}
\Pb\bigg(8(\mfc b_n)^{-1}\Big|\sum_{\ell=1}^N
\lambda_{\ell,n}^2(\zeta_{\ell,n}^2-1)\Big|>1/2\bigg)&\leq
Cb_n^{-k}\Ex[\mu_{4,n}^{k/2}+\mu_{2k,n}]\le C b_n^{-k}\Ex[r_n^{3k/2}]\\
&\le C b_n^{-k+9k/8}+Cb_n^{-k}\Pb(A_n(a)^c)=O(b_n^2).
\end{align*}
Thus, we proved that conditions (2)-(5) of Theorem~\ref{ThmSY} are fulfilled and that $\sup_n \|\calx_n\|_{p,4}<\infty$.
Condition (6) is ensured by Theorem~\ref{ThmAE3}. To complete the proof, it remains to check that $\sup_n \|\caly_n\|_{p,4}<\infty$.
This inequality can be proved using the identity $\caly_n=b_n^{-1}(\Phi_n^2+\Phi_n^3)$, where $\Phi_n^2$ and $\Phi_n^3$ are the random
variables defined in the proof of Lemma~\ref{lem-decomp}. The proof is rather technical, but is based on the arguments that we have already
used several times in this and the previous sections. Therefore it will be omitted.
\end{proof}

In the case when the sampling scheme is generated by two Poisson processes, we get the following consequence of the last theorem.
\begin{proposition}
Let the sampling times of processes $X_1$ and $X_2$ be generated by two independent Poisson processes  with intensities $np_1$
and $np_2$, $p_1p_2>0$. If
\begin{itemize}
\item the sampling times are independent of the process $\mX$,
\item the functions $\sigma_1$, $\sigma_2$ and $\rho$ are Lipschitz continuous,
\item $\sup_{t\in[0,T]}\|\beta^{[l-1]}_{i,t}\|_{p,4}<\infty$ for all $p>1$, $i,l\in\{1,2\}$,
\end{itemize}
then
\begin{equation}\label{ae6}
\sup_{f\in\cale(M,\gamma)\cap\cale^0({\sf C},\eta,r_0,\mfc^*)} \l| \>\Ex[f(n^{1/2}(\hat\theta_n-\theta))]-\int_\bbR f(z)\,p_n^\circ(z)\,dz\>\r|=o(n^{-1/2}),
\end{equation}
where
\begin{align*}
p_{n}^\circ(z)&\propto \frac{e^{-z^2/(2\mfc)}}{\sqrt{2\pi\mfc}}\Bigl[1
+\frac{1}{\sqrt{n}\mfc^3}\big(2\kappa z^3-6\kappa\mfc z+{\sf A}\mfc^2z\big)\Bigr]_+
\end{align*}
is a probability density with
\begin{align*}
\mfc&=\bigg(\frac2{p_1}+\frac{2}{p_2}\bigg)\int_0^T\sigma_{1,t}^2\sigma_{2,t}^2
(1+\rho_t^2)dt-\frac{2}{p_1+p_2}\int_0^T (\sigma_{1,t}\sigma_{2,t}\rho_t)^2dt,\\
\kappa&=\bigg(\frac{1}{p_1^2}+\frac{1}{p_2^2}\bigg) \int_0^T h(t)^3\,dt
+\frac{3p_1^2+2p_1p_2+3p_2^2}
{p_1^2p_2^2} \int_0^T \sigma_{1,t}^2\sigma_{2,t}^2h(t)\, dt,\\
{\sf A}&=\bigg(\frac1{p_1}+\frac{1}{p_2}\bigg)\int_0^T \{\sigma_{1,t}\Ex(\beta^{[1]}_{21,t}
+\beta^{[1]}_{22,t}\rho_t)+\sigma_{2,t}\Ex(\beta^{[1]}_{11,t}\rho_t+\beta^{[1]}_{12,t})+2\Ex[\beta_{1,t}\beta_{2,t}]\}dt.
\end{align*}
\end{proposition}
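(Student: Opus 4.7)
The plan is to derive the result as a direct application of Theorem \ref{200325-20} with $b_n = 1/n$, identifying the relevant quantities for the Poisson sampling and then matching the general density $p_n^*$ furnished by that theorem with the explicit density $p_n^\circ$ stated in the proposition. Four items require attention: (i) verification of the tail bound $\Pb(A_n(a)^c) = o(n^{-p})$ for every $p > 1$ and of the expectation bound $\Ex[\bar\lambda_{2,n}]-\mfc = O(n^{1-2a})$; (ii) verification of Assumptions \textsf{P1} and \textsf{P2} and computation of the seven limit measures; (iii) computation of $\Ex[\bar\lambda_{3,n}]$; (iv) algebraic simplification of $p_n^*$ into $p_n^\circ$.

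Item (i) is essentially available from Section \ref{Sec4}. The exponential concentration of $|2n\mu_{2,n}-\mfc|$ in Proposition \ref{propo5}, applied with $x = n^{1-a}/2$, combined with the exponential tail of $nr_n$ in Lemma \ref{lem2} applied with $x = n^{1-a}$, yields $\Pb(A_n(a)^c) \le Cn\exp(-n^{1-a}/C)$, which is $o(n^{-p})$ for every $p>1$ as soon as $a<1$. The second statement of Proposition \ref{propo5} gives $\Ex[\bar\lambda_{2,n}]-\mfc = O(n^{-1}\log^3 n)$, absorbed in $O(n^{1-2a})$ for every $a<1$. The drift moment condition of Theorem \ref{200325-20} is part of the hypotheses of the present proposition.

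Item (ii) rests on the fact that, for homogeneous Poisson processes with intensities $np_1$ and $np_2$, the interval lengths are independent exponential variables with respective means $1/(np_1)$ and $1/(np_2)$. Palm-type expected-value computations, entirely analogous to those used in the proofs of Propositions \ref{propo5} and \ref{prop6}, combined with second-moment estimates to upgrade to convergence in probability, yield the diagonal limits
\begin{align*}
\calv^{I}(dt)=\tfrac{2}{p_1}\,dt,\quad \calv^{J}(dt)=\tfrac{2}{p_2}\,dt,\quad \calv^{I\cap J}(dt)=\tfrac{2}{p_1+p_2}\,dt,\quad \calv^{I\!,J}(dt)=\Big(\tfrac{2}{p_1}+\tfrac{2}{p_2}\Big)dt,
\end{align*}
together with analogous closed-form diagonal measures for the three three-dimensional objects of Assumption \textsf{P2}. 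Inserted into the general expression of $\mfc$ in Theorem \ref{200325-20}, these reproduce the constant $\mfc$ of Theorem \ref{expPois}; inserted into the general expression of ${\sf A}$, they produce the prefactor $(1/p_1+1/p_2)$ announced in the proposition. For item (iii), Proposition \ref{prop6} yields $\Ex[\bar\lambda_{3,n}]=8n^2\Ex[\mu_{3,n}]=12\kappa+o(1)$, in particular bounded; injecting this together with ${\sf A}$ into $p_n^*$ and simplifying $\frac{1}{6\sqrt n\,\mfc^3}\bigl(12\kappa(z^3-3\mfc z)+6{\sf A}\mfc^2 z\bigr)$ delivers the bracket appearing in $p_n^\circ$. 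The replacement of $p_n^*$ by its normalized positive part $p_n^+ = p_n^\circ$ is justified, at a cost exponentially small in $n$, exactly as in the remark following Theorem \ref{expPois}. The main technical burden is the honest verification of Assumption \textsf{P2}: the triple integrals against $\calv_n^{I,I'\!,J}$, $\calv_n^{I\!,J,J'}$ and $\calv_n^{J(I),I(J),I\cap J}$ must be evaluated using the Palm calculus of the two independent Poisson processes, but the combinatorics closely parallels the one already deployed for $\mu_{3,n}$ in Proposition \ref{prop6}, so no genuinely new probabilistic tool is needed.
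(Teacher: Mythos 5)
Your proposal is correct and follows essentially the same route as the paper: the paper's proof likewise invokes Theorem~\ref{200325-20} after noting that Lemmas~\ref{lemA5}--\ref{lemA8} establish Assumptions \textsf{P1} and \textsf{P2} for Poisson sampling, and draws the quantitative inputs (tail bound on $A_n(a)^c$, $\Ex[\bar\lambda_{2,n}]-\mfc$, $\Ex[\bar\lambda_{3,n}]=12\kappa+o(1)$) from Proposition~\ref{propo5}, Lemma~\ref{lem2} and Proposition~\ref{prop6} exactly as you do. Your Palm-type computation of the limit measures and the final algebraic matching of $p_n^*$ with $p_n^\circ$ reproduce, in somewhat more explicit form, precisely the argument the authors give.
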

\begin{proof}
Lemmas~\ref{lemA5}-\ref{lemA8} (cf.\ Section \ref{App1}) imply that the partitions generated by independent Poisson processes satisfy
Assumptions \textsf{P1} and {\sf P2}. Therefore, using Theorems~\ref{200325-20} and \ref{expPois}, we get the desired result.
\end{proof}

\section{Proofs of theorems and propositions}\label{App0}

\begin{proof}[Proof of Proposition \ref{propo5}]
Let us recall the relations
\begin{align*}
n\sum_{I\!,J} v_1(I)v_2(J)\KIJ&\xrightarrow[n\to\infty]{P}
2(p_1^{-1}+p_2^{-1})\int_0^T \sigma_{1,t}^2\sigma_{2,t}^2dt,\\
n\sum_{I\in\Pi_i} v(I)^2 &\xrightarrow[n\to\infty]{P}
2p_i^{-1}\int_0^T (\sigma_{1,t}\sigma_{2,t}\rho_t)^2\,dt,\quad i=1,2\\
n\sum_{I\!,J} v(I\cap J)^2 &\xrightarrow[n\to\infty]{P}
2(p_1+p_2)^{-1}\int_0^T (\sigma_{1,t}\sigma_{2,t}\rho_t)^2\,dt
\end{align*}
proved in Hayashi and Yoshida~\cite{Hay-Yos04}. The aim of the present proposition
is to show that the rate of convergence in these relations is
$1/\sqrt{n}$ and to get an exponential control of the probabilities of 
large deviations. Thus, let us denote $\calt_1=n\sum_{I\!,J}
v_1(I)v_2(J)\KIJ$ and show that
$$
\Pb\bigg(\bigg|\calt_1- 2(p_1^{-1}+p_2^{-1})\int_0^T
\sigma_{1,t}^2\sigma_{2,t}^2dt\bigg|\ge \frac{x}{\sqrt{n}}\bigg)\le
Cne^{-x/C}.
$$
Let $N(x)=\lceil nT/x\rceil$ be the smallest positive
integer such that $N(x)x >nT$ and let us set
$L_i=[iTN(x)^{-1},(i+1)TN(x)^{-1}]$. The intervals $L_i$ define a
uniform deterministic partition of $[0,T]$ with a mesh-size of order
$x/n$. Let $\cale$ be the event ``for every $i=1,\ldots,4N(x)$, the
interval $[\frac{iT}{4N(x)},\frac{(i+1)T}{4N(x)}]$ contains at least
one point from $\Pi^1_n$ and one point from $\Pi^2_n$''. The total
probability formula implies that
\begin{align*}
\Pb\bigg(&\bigg|\calt_1- \int_0^T \bar h(t)dt\bigg|\ge \frac{x}{\sqrt{n}}\bigg)
\le \Pb\bigg(\bigg|\calt_1- \int_0^T \bar h(t)\,dt\bigg|\ge
\frac{x}{\sqrt{n}}\;\bigg|\;\cale\bigg)+\Pb(\cale^c),
\end{align*}
where $\cale^c$ denotes the complementary event of $\cale$ and $\bar
h(t)=2(p_1^{-1}+p_2^{-1})\sigma_{1,t}^2\sigma_{2,t}^2$. Easy computations show that
$\Pb(\cale^c)\le C nx^{-1} e^{-x/C}$ for some $C>0$.

Let now $l_i$ be a point in $L_i$ such that $\int_{L_i}\bar
h(t)\,dt=\bar h(l_i)|L_i|$. Let us denote by $a_I$ the left endpoint of the interval $I$ and
define the random variables
$$
\eta_i^\circ=n\bar h({l_i})\sum_{I\!,J} |I||J|\KIJ\1_{\{a_I\in
L_i\}},\quad i=1,\ldots,N(x).
$$
In what follows, we denote by $\E^\cale$ the conditional expectation given $\cale$. It holds that $\calt_1-\int_0^T\bar
h(t)\,dt=\calt_{11}+\calt_{12}+\calt_{13}+\mO(n |L_1|^2)$
on $\cale$, where
\begin{align*}
\calt_{11}&=\Ex^\cale\bigg[\sum_{i=1}^{N(x)}\eta_i^\circ\bigg]-\int_0^T\bar
h(t)\,dt,\quad
\calt_{1s}=\sum_{i=1}^{[N(x)/2]}(\eta_{2i+s-2}^\circ-\Ex^\cale[\eta_{2i+s-2}^\circ]),\quad s=2,3.
\end{align*}
For evaluating the remainder term in $\calt_1$, we have used the
Lipschitz continuity of $\sigma_1$ and $\sigma_2$, as well as the
fact that $r_n\le |L_1|/2$ on $\cale$.

Remark that in view of Lemma~\ref{lem2}, for any $p>0$, we have
\begin{align}\label{r_np}
\Ex[r_n^p]&=n^{-p}\int_0^\infty \Pb((nr_n)^p\ge t)\,dt\le C n^{-p}
\int_0^\infty (ne^{-t^{1/p}})\wedge 1\,dt=C n^{-p}\mO(\log^p n).
\end{align}

On the one hand, since $|\sum_{i=1}^{N(x)}\eta^\circ_i|\le Cnr_n$,
we have
$$
\bigg|\Ex^\cale\Big[\sum_{i=1}^{N(x)}\eta_i^\circ\Big]-\Ex\Big[\sum_{i=1}^{N(x)}\eta_i^\circ\Big]\bigg|
\le \frac{n\Ex[r_n\1_{\cale^c}]}{\Pb(\cale)}.
$$
Using the inequality of Cauchy-Schwarz, as well as the bounds
$\Pb(\cale^c)\le Cne^{-x/C}$ and (\ref{r_np}), we get $\big|
\Ex^\cale \big[ \sum_{i=1}^{N(x)} \eta_i^\circ\big] - \Ex\big
[\sum_{i=1}^{N(x)} \eta_i^\circ\big] \big| \le Cne^{-x/C}$, for some
constant $C$ and for every $x>C\log n$.

On the other hand, in view of Lemma~\ref{lemA4} presented in Section~\ref{App1} below, we have
$$
\Ex[\eta_i^\circ]\le n\bar h(l_i)\Ex\bigg[\sum_{I:a_I\in
L_i}\Big(|I|^2+\frac{2|I|}{np_2}\Big)\bigg]\le C
n\Ex[(r_n+n^{-1})(|L_i|+r_n)].
$$
Therefore, using (\ref{r_np}), we get
$\Ex[\eta_i^\circ]=\mO(n^{-1}\log^3 n)$ for every $i\le N(x)$. Using
once again Lemma~\ref{lemA4}, we get
\begin{align*}
\Ex\Big[\sum_{i=1}^{N(x)}\eta_i^\circ\Big]&=\sum_{i=2}^{N(x)-1}
n\bar h(l_i)\Ex\Big[\sum_{I:a_I\in
L_i}|I|\cdot\Ex^{\Pi^1}\Big(\sum_{J\in\Pi^2}|J|\KIJ\Big)\Big]+\mO\Big(\frac{\log^3
n}{n}\Big)\\
&=\sum_{i=2}^{N(x)-1} n\bar h(l_i)\Ex\Big[\sum_{I:a_I\in
L_i}\big(|I|^2+2|I|/(np_2)\big)\Big]+\mO\Big(\frac{\log^3
n}{n}\Big).
\end{align*}
Wald's equality yields
$
\Ex\Big[\sum_{I:a_I\in L_i} |I|^k\Big]=\Ex[N_1(L_i)]\cdot
\Ex[\zeta^k/(np_1)^k]+\mO(e^{-\log^2n/C}),
$ for every $k>0$ and for every $i\le N(x)-1$. Here, $N_1(L_i)$ is the
number of points of $\mP^{1,n}$ lying in $L_i$ and $\zeta\sim\msE(1)$, the exponential 
distribution with parameter one. Putting all these estimates together, we get
\begin{align*}
\Ex\Big[\sum_{i=1}^{N(x)}\eta_i^\circ\Big]&=\sum_{i=2}^{N(x)-1}
n\bar
h(l_i)\Big(\frac{2|L_i|}{np_1}+\frac{2|L_i|}{np_2}\Big)+\mO\Big(\frac{\log^3
n}{n}\Big)\\
&=\Big(\frac2{p_1}+\frac2{p_2}\Big)\sum_{i=1}^{N(x)} \bar
h(l_i)|L_i|+\mO\Big(\frac{\log^3 n}{n}\Big).
\end{align*}
Since $l_i$ has  been chosen such that $\bar h(l_i)|L_i|=\int_{L_i}
\bar h(t)\,dt$, the last relation implies that
$\calt_{11}=\mO(n^{-1}\log^3n)$.

The advantage of working with $\eta^\circ_i$s is that, conditionally
to $\cale$, the random variables $\eta^\circ_{2i}$,
$i=1,\ldots,[N(x)/2]$, are independent. Indeed, one easily checks
that conditionally to $\cale$, $\eta^\circ_{2i}$ depends only on the
restrictions of $\mP^{1,n}$ and $\mP^{2,n}$ onto the interval
$[\frac{(4i-1)T}{2N(x)},\frac{(4i+3)T}{2N(x)}]$. Since these
intervals are disjoint for different values of $i\in\bbN$, the
restrictions of Poisson processes $\mP^{k,n}$, $k=1,2$, onto these
intervals are independent. Therefore, $\eta_{2i}^\circ$,
$i=1,\ldots,[N(x)/2]$, form a sequence of random variables that are
independent conditionally to $\cale$. Moreover, they verify
$|\eta_i^\circ|\le Cn|L_i|^2=\frac{C\log^4 n}{n}$.

These features enable us to use the Bernstein inequality in order to
bound the probabilities of large deviations of $\calt_{12}$ as follows:
\begin{align*}
\Pb^\cale\big(|\calt_{12}|\ge x/\sqrt{n}\big)&\le
2\exp\Big({-\frac{x^2}{C(1+xn^{-1/2}\log^4 n)}}\Big)\le 2e^{-x/C},
\end{align*}
for every $x>1$. Obviously, the same inequality holds true for the
term $\calt_{13}$. These inequalities combined with the bound on the
error term $\calt_{11}$ complete the proof of (\ref{exp_in_mu}).

Moreover, since $\calt_{12}$ and $\calt_{13}$ are zero mean random variables,
conditionally to $\cale$, and $\cale^c$ has a probability bounded by
$Cne^{-x/C}$, it follows from the computations above that
$$
\Ex[\calt_1]=2(p_1^{-1}+p_2^{-1})\int_0^T
\sigma_{1,t}^2\sigma_{2,t}^2\,dt+\mO(n^{-1}\log^3 n).
$$
Similar arguments entail that
$\Ex[2n\mu_{2,n}]=\mfc+\mO(n^{-1}\log^3 n)$.
\end{proof}

\begin{proof}[Proof of Proposition \ref{prop6}]
The assertion of the theorem follows from the following relations:
\begin{align*}
&\Ex\Big[\sum_{I\in\Pi_i} v(I)^3\Big]=\frac{6}{n^2p_i^2}\int_0^T h(t)^3\,dt
+\mO\Big(\frac{\log^3 n}{n^3}\Big),\quad i=1,2,\\
&\Ex\Big[\sum_{I\!,J} v(I\cap J)^3\Big]=\frac{6}{n^2(p_1+p_2)^2}
\int_0^T h(t)^3\,dt+\mO\Big(\frac{\log^3 n}{n^3}\Big),\\
&\Ex\Big[\sum_{I\!,J} v(I\cap J)^2v(I)\Big]=\frac{18p_1+12p_2}{n^2p_1(p_1+p_2)^2}
\int_0^T h(t)^3\,dt+\mO\Big(\frac{\log^3 n}{n^3}\Big),\\
&\Ex\Big[\sum_{I\!,J} v(I\cap J)^2v(J)\Big]=\frac{18p_2+12p_1}{n^2p_2(p_1+p_2)^2}
\int_0^T h(t)^3\,dt+\mO\Big(\frac{\log^3 n}{n^3}\Big),\\
&\Ex\Big[\sum_{I\!,J} v(I\cap J)v(I)v(J)\Big]=\frac{4}{n^2p_2p_1}
\int_0^T h(t)^3\,dt+\mO\Big(\frac{\log^3 n}{n^3}\Big),\\
&\Ex\Big[\sum_{I\!,J} v(I\cup J)v_1(I)v_2(J)\Big]=\frac{6p_1^2+4p_1p_2+6p_2^2}
{n^2p_1^2p_2^2} \int_0^T \frac{h(t)^3}{\rho_t^2}\, dt
+\mO\Big(\frac{\log^3 n}{n^3}\Big).
\end{align*}
Let us prove in detail the fifth relation. The proofs of the other relations
are based on similar arguments and are easier than that of fifth relation.

Using the Lipschitz continuity of the function $h$, one can check that
$v(I\cap J)v(I)v(J)=h(a_I)^3|I|\cdot|J|\cdot|I\cap J|+\mO(r_n^3)|I\cap J|$,
where $a_I$ is the left endpoint of the interval $I$.

In view of (\ref{r_np}), we have 
$\Ex\Big[ \sum_{I\!,J}r_n^3|I\cap J|\Big] \le T\Ex[r_n^3]
=\mO\Big(\frac{\log^3 n}{n^3}\Big).
$ On the other hand
$$
\Ex\Big[\sum_{I\in \Pi^1}h(a_I)^3|I| \sum_{J\in\Pi^2}
|J| \,|I\cap J| \Big]=
\Ex\Big[\sum_{I\in \Pi^1}h(a_I)^3|I|\Ex^{I}
\Big( \sum_{J\in\Pi^2} |J|\,|I\cap J|\Big)\Big],
$$
where $\Ex^I$ is the conditional expectation given $I$.
According to Lemmas~\ref{lemA2} and \ref{lemA4}, presented in Section~\ref{App1} below,
$$
\Ex^I\Big( \sum_{J\in\Pi^2} |J|\,|I\cap J|\Big)\Big]=
\frac{2|I|}{np_2}-\frac{(1- e^{-np_2|I|})(e^{-np_2a_I}+ e^{-np_2(T-b_I)})}
{n^2p_2^2}.
$$
Now, let us show  that
\begin{align*}
\calt_1&:=\frac{2}{np_2}\;\Ex\Big[\sum_{I\in \Pi^1}h(a_I)^3|I|^2
\Big]=
\frac{4}{n^2p_1p_2}\int_0^T h^3(t)\,dt+\mO(n^{-3}),\\
\calt_2&:=\Ex\Big[\sum_{I\in \Pi^1}h(a_I)^3|I|\frac{(1-
e^{-np_2|I|})e^{-np_2a_I}}
{n^2p_2^2}\Big]=\mO(n^{-3}),\\
\calt_3&:=\Ex\Big[\sum_{I\in \Pi^1}h(a_I)^3|I|\frac{(1-
e^{-np_2|I|}) e^{-np_2(T-b_I)}}{n^2p_2^2}\Big]=\mO(n^{-3}).
\end{align*}
To this end, we use the characterization of a Poisson process as a
renewal process with exponential waiting times. Let $(\zeta_k, k\ge
1)$ be a sequence of i.i.d.\ random variables drawn from the exponential distribution 
with mean $1/(np_1)$. Then $N_1$, $S^i$ can be defined by
$N_1=\inf\{k\ge 1: \zeta_1+\ldots+\zeta_{k}\ge T\}$ and
$S^i=(\zeta_1+\ldots +\zeta_{i})\wedge T$ for $i=1,\ldots,N_1$. In
this notation,
\begin{align*}
\calt_1&=\frac{2}{np_2}\;\Ex\Big[\sum_{i=1}^{N_1-1} h(S^i)^3
\zeta_{i+1}^2
\Big]+\mO(n^{-3}),\quad
|\calt_2|\le \|h\|_\infty^3\Ex\Big[\sum_{i=1}^{N_1-1}
\frac{\zeta_{i+1}e^{-np_2S^i}} {n^2p_2^2}\Big]+\mO(n^{-3}),
\end{align*}
where $\|h\|_\infty=\max_{t\in[0,T]}|h(t)|$. Remark that $N_1$ is a
stopping time with respect to the filtration
$\calf_k=\sigma(\zeta_1,\ldots,\zeta_k)$, $k\ge 1$. It is easily
seen that
\begin{align*}
M_k&=\sum_{i=1}^{k-1} h(S^i)^3 (\zeta_{i+1}^2-\Ex[\zeta_{i+1}^2]),\quad
M_k'=\sum_{i=1}^{k-1}\;\big( \zeta_{i+1}-\Ex[\zeta_{1}]\big)
e^{-np_2S^i}
\end{align*}
are $\calf_k$-martingales for which the conditions of the optional
stopping theorem are fulfilled. Therefore
\begin{align*}
\calt_1&=\frac{2}{np_2}\;\Ex[\zeta_{1}^2]\,\Ex\Big[\sum_{i=1}^{N_1-1}
h(S^i)^3
\Big]+\mO(n^{-3}),\\
\calt_2&\le
\frac{\|h\|_\infty^3}{n^2p_2^2}\;\Ex[\zeta_{1}]\,\Ex\Big[\sum_{i=1}^{N_1-1}
e^{-np_2S^i}\Big]+\mO(n^{-3}).
\end{align*}
These relations imply that
\begin{align*}
\calt_1=\frac{4}{n^2p_1p_2}\int_0^T h(t)^3\,dt+\mO(n^{-3}),\ 
|\calt_2|\le \frac{\|h\|_\infty^3}{n^2p_2^2}\,\int_0^T
e^{-np_2t}\,dt+\mO(n^{-3})=\mO(n^{-3}).
\end{align*}
In the above inequalities we used the fact that for any integrable
function $f$ on $[0,T]$, the equality $\Ex[\sum_{i=1}^{N_1-1} f(S^i)]=
n p_1 \int_0^T f(t) \,dt$ holds true.

The term $\calt_3$ can be bounded in the same way as $\calt_2$ by
using the fact that if $\{t_1,\ldots,t_{N}\}$ is a realization of a
homogeneous Poisson point process in $[0,T]$, then
$\{T-t_1,\ldots,T-t_{N}\}$ can be seen as a realization of the same
Poisson point process. This completes the proof of the proposition.
\end{proof}

\begin{proof}[Proof of Theorem~\ref{ThmSY}]
Let $\psi_n$ be some truncation functional to be defined later and let $\zeta(x)=1+|x|^{2m}$ $(x\in\bbR^d)$, where
$m$ is an integer such that $2m>\gamma+d$. We have
\beas
\E[f(\calz_n)]=\E[f(\calz_n)\psi_n]+\E[f(\calz_n)(1-\psi_n)]
=\int_{\bbR^d}f(x)\tilde{p}_n(x)\,dx+\E[f(\calz_n)(1-\psi_n)],
\eeas
where
$\tilde{p}_n(x)
=\frac{1}{(2\pi)^d}\int_{\bbR^d}\>e^{-{\ii}u\cdot x}\>
\hat{g}^0_n(u)
\>du,$
with
$\hat{g}^0_n(u)=\E[e^{{\ii}u\cdot \calz_n}\psi_n]$.

We will show below (cf.\ (\ref{1-psi})) that the term $\E[f(\calz_n)(1-\psi_n)]$ is $\bar{o}(s_n)$ and is negligible with respect to
$\E[f(\calz_n)\psi_n]$. To deal with this latter term, let us introduce the notation
\begin{align*}
h_n^0(x)&=
\frac{1}{(2\pi)^d}\int_{\bbR^d}\>e^{-{\ii}u\cdot x}
\hat{h}_n^0(u)\>du,\\
\hat{h}_n^0(u)
&=
\Psi_n[e^{{\ii}u\cdot x}]
+s_n\E\Bigl[e^{{\ii}u\cdot \calx_\infty}\>{\ii}u\cdot \caly_\infty
\Bigr],\\
\hat{g}_n(u)&=
\E[e^{{\ii}u\cdot\calz_n}\psi_n\zeta(\calz_n)],\\
\hat{h}_n(u)&=
\zeta(-\ii\partial_u)\hat{h}_n^0(u)
=\Psi_n[e^{{\ii}u\cdot x}\zeta(x)]
+s_n\E\Bigl[\zeta(-\ii\partial_u)(e^{{\ii}u\cdot y}{\ii}u)\Big|_{y=\calx_\infty}\cdot
\caly_\infty\Bigr].
\end{align*}
Using the Integration By Parts (IBP) formula, we get
\begin{align*}
\zeta(x)\tilde{p}_n(x)=
\frac{1}{(2\pi)^d}\int_{\bbR^d}\>e^{-{\ii}u\cdot x}\hat{g}_n(u)\>du,\quad
\zeta(x)h_n^0(x)
=\frac{1}{(2\pi)^d}\int_{\bbR^d}\>e^{-{\ii}u\cdot x}\hat{h}_n(u)\>du.
\end{align*}
Further, there is a linear form $\zeta_2(x,y)[\cdot]$
of polynomial elements such that
\beas
\zeta(x+y)=\zeta(x)+\partial\zeta(x)[y]+\zeta_2(x,y)[y^{\otimes2}]
\eeas
for $x,y\in\bbR^d$.
We also notice that, for all $u,y\in\bbR^d$,
\beas
\zeta(-\ii\partial_u)(e^{{\ii}u\cdot y}{\ii}u)
&=&
\zeta(-\ii\partial_u)\partial_y e^{{\ii}u\cdot y}
=\partial_y(\zeta(-\ii\partial_u) e^{{\ii}u\cdot y})
\\&=&
\partial_y(e^{{\ii}u\cdot y}\zeta(y))
=e^{{\ii}u\cdot y}\zeta(y){\ii}u+e^{{\ii}u\cdot y}\partial\zeta(y).
\eeas
Let $\varphi(x)=f(x)/\zeta(x)$ and $\Lambda_n=\{u\in\bbR^d;\>|u|\leq s_n^{-1}\}$.
Then
\beas
(2\pi)^d\int_{\bbR^d}f(x)\bigl\{\tilde{p}_n(x)-h_n^0(x)\bigr\}\>dx
&=&A(n)+s_nB(n)+s_nC(n)+s_n^2D(n)+E(n),
\eeas
where
\begin{align*}
A(n)&=
\int_{\bbR^d}dx\,\varphi(x)
\int_{\Lambda_n}e^{-{\ii}u\cdot x}
\Bigl\{\E\Bigl[e^{{\ii}u\cdot\calx_n}\psi_n\zeta(\calx_n)\Bigr]
-\Psi_n\Bigl[e^{{\ii}u\cdot x}\zeta(x)\Bigr]\Bigr\}\>du,\\
B(n)&=
\int_{\bbR^d}dx\,\varphi(x)
\int_{\Lambda_n}e^{-{\ii}u\cdot x}
\Bigl\{\E\Bigl[e^{{\ii}u\cdot\calx_n}{\ii}u\cdot\caly_n
\>\int_0^1\exp({\ii}s_nu\cdot\caly_ns)ds\>\psi_n\zeta(\calx_n)\Bigr]
\\
&\qquad\qquad-\E\Bigl[e^{{\ii}u\cdot\calx_\infty}{\ii}u\cdot\caly_\infty\zeta(\calx_n)\Bigr]
\Bigr\}\>du,\\
C(n)&=\int_{\bbR^d}dx\,\varphi(x)
\int_{\Lambda_n}e^{-{\ii}u\cdot x}
\Bigl\{\E\Bigl[e^{{\ii}u\cdot\calz_n}\psi_n\partial\zeta(\calx_n)[\caly_n]\Bigr]
-\E[e^{{\ii}u\cdot\calx_\infty}\partial\zeta(\calx_\infty)[\caly_\infty]\Bigr]
\Bigr\}\>du,\\
D(n)&=
\int_{\bbR^d}
dx\,\varphi(x)
\int_{\Lambda_n}e^{-{\ii}u\cdot x}\E\Bigl[e^{{\ii}u\cdot\calz_n}\psi_n
\zeta_2(\calx_n,s_n\caly_n)[\caly_n^{\otimes 2}]\Bigr]
\>du\\
E(n)&=\int_{\bbR^d}dx\,\varphi(x)
\int_{\Lambda_n^c}e^{-{\ii}u\cdot x}
\Bigl(\hat{g}_n(u)-\hat{h}_n(u)\Bigr)\>du.
\end{align*}
Since
$$
\int_{\bbR^d}dx\varphi(x)
\int_{\bbR^d}e^{-{\ii}u\cdot x}
\E\Bigl[e^{{\ii}u\cdot\calx_n}\psi_n\zeta(\calx_n)\Bigr]
\>du
=
(2\pi)^d
\E\Bigl[\varphi(\calx_n)\psi_n\zeta(\calx_n)\Bigr]
$$
and $
\int_{\bbR^d}dx\varphi(x)
\int_{\bbR^d}e^{-{\ii}u\cdot x}
\Psi_n\bigl[e^{{\ii}u\cdot x}\zeta(x)\bigr]\>du
=
(2\pi)^d
\Psi_n[\varphi\zeta]$, we have
\beas
|A(n)|
&\leq&
(2\pi)^d\Bigl| \E\Bigl[\varphi(\calx_n)\psi_n\zeta(\calx_n)\Bigr]
-
\Psi_n[\varphi\zeta]\Bigr|
+F(n)
\\&\leq&
(2\pi)^d\Bigl|\E\Bigl[\varphi(\calx_n)(1-\psi_n)\zeta(\calx_n)\Bigr]\Bigr|
+F(n)+\bar{o}(s_n)
\eeas
from condition (6) of Theorem~\ref{ThmSY},
where
\beas
F(n)&=&
(2\pi)^d
\int_{\bbR^d}|\varphi(x)|\>dx \times
\int_{\Lambda_n^c}
\Bigl\{\Bigl|\E\Bigl[e^{{\ii}u\cdot\calx_n}\psi_n\zeta(\calx_n)\Bigr]\Bigr|
+\Bigl|\Psi_n\Bigl[e^{{\ii}u\cdot x}\zeta(x)\Bigr]\Bigr|
\Bigr\}\>du.
\eeas
In what follows ${\sf C}$ denotes a generic constant independent of $n$ and $u$
and it varies from line to line.

To evaluate $F(n)$, we need the explicit form of $\psi_n$. Let us denote by
$\psi$ a smooth function from $\bbR$ into $[0,1]$ such that $\psi(t)=1$ if $|t|\leq 1/2$
and $\psi(t)=0$ if $|t|\geq1$.
We can write
$$
\det\Big[I_d+s_n\sigma_{\calx_n}^{-1}(\langle\calx_n,\caly_n\rangle
+\langle\caly_n,\calx_n\rangle)\Big]
=
1+s_n\det\sigma_{\calx_n}^{-d}\>K_n
$$
with a certain functional $K_n$ satisfying, for every $p>1$, the condition
$\sup_n\|K_n\|_{p,\ell-1}<\infty$.
Let $\psi_n=\psi(\tau_n)\psi\Bigl(2s_n\det\sigma_{\calx_n}^{-d}\>K_n\Bigr)$. Obviously, $\psi_n\in\cap_{p>1}\bbD_{p,\ell-1}$;
in order to prove it,
replace $\sigma_{\calx_n}$ by $\sigma_{\calx_n}+k^{-1}I_d$,
differentiate, and take limits in $L^p$-spaces as $k\iku\infty$.
Furthermore, we infer that $\sup_n\|\psi_n\|_{p,\ell-1}<\infty$ for
every $p>1$.
If $\psi_n>0$, then $\det(\sigma_{\calx_n}^{-1}\sigma_{\calz_n})\geq 1/2$ leading to
\bea\label{180928-1}
\det\sigma_{\calz_n}\geq\half\det\sigma_{\calx_n}.
\eea

By applying the IBP formula and the non-degeneracy
assumption for $\calx_n$ under truncation,
we find that
$\sup_n
\bigl|\E\bigl[e^{{\ii}u\cdot\calx_n}\psi_n\zeta(\calx_n)\bigr]\bigr|
\leq
\frac{{\sf C}}{1+|u|^{\ell-1}}$ 
for all $u\in\bbR^d$. 
Combined with condition (6), this implies that
$F(n)=\bar{O}(s_n^2)=\bar{o}(s_n)$. Besides,
\begin{equation}\label{1-psi}
\Bigl|\E\Bigl[\varphi(\calx_n)(1-\psi_n)\zeta(\calx_n)\Bigr]\Bigr|
\leq{\sf C}_q\|1-\psi_n\|_q=\bar{o}(s_n).
\end{equation}
Here $q$ is arbitrary constant such that $q\in(0,1)$.
Consequently, $A(n)=\bar{o}(s_n)$.

Taking the limit of
$
\sup_n
\bigl|
\E\bigl[\zeta(-\ii\partial_u)(e^{{\ii}u\cdot y}{\ii}u)\Big|_{y=\calx_n}\cdot
\caly_n\bigr]
\bigr|
\leq
\frac{{\sf C}}{1+|u|^{\ell-2}},
$
we get
\beas
\Bigl|
\E\Bigl[\zeta(-\ii\partial_u)(e^{{\ii}u\cdot y}{\ii}u)\Big|_{y=\calx_\infty}\cdot
\caly_\infty\Bigr]
\Bigr|
&\leq&
\frac{{\sf C}}{1+|u|^{\ell-2}}
\eeas
for all $u\in\bbR^d$.
On the other hand, from the IBP formula in view of the uniform nondegeneracy of $\calz_n$  under truncation
deduced from that of $\calx_n$ by (\ref{180928-1}),
it follows that $\sup_n|\hat{g}_n(u)|\leq\frac{{\sf C}}{1+|u|^{\ell-1}}$ for all $u\in\bbR^d$.
{From} these estimates, we have
$E(n)=\bar{O}(s_n^2)=\bar{o}(s_n)$.
Similar argument yields the estimate $\sup_n|D(n)|<\infty$.

To obtain $C(n)=\bar{o}(1)$, we apply Lebesgue's dominated convergence theorem
in conjunction with the estimate
\beas
\sup_n
\Bigl|\E\Bigl[e^{{\ii}u\cdot\calz_n}\psi_n\partial\zeta(\calx_n)[\caly_n]\Bigr]
-\E\Bigl[e^{{\ii}u\cdot\calx_\infty}\partial\zeta(\calx_\infty)[\caly_\infty]
\Bigr]
\Bigr|
\leq
\frac{{\sf C}}{1+|u|^{\ell-1}}
\eeas
for all $u\in\bbR^d$.
In the same way, we can obtain $B(n)=\bar{o}(1)$.
However, we have to use more elaborately
the estimate
\beas
\sup_n \>1_{\Lambda_n}(u)
\Bigl| \E\Bigl[e^{{\ii}u\cdot\calx_n}{\ii}u\cdot\caly_n
\>\int_0^1\exp({\ii}s_nu\cdot\caly_ns)ds\>\psi_n\zeta(\calx_n)\Bigr] \Bigr|
\leq
\frac{{\sf C}}{1+|u|^{\ell-2}}
\eeas
(${\sf C}$ is independent of $u$) and its limiting version
$\bigl| \E\bigl[e^{{\ii}u\cdot\calx_\infty}{\ii}u\cdot\caly_\infty
\zeta(\calx_\infty)\bigr] \bigr|
\leq
\frac{{\sf C}}{1+|u|^{\ell-2}}$.

Combining all the estimates, we get
$\int_{\bbR^d}f(x)\tilde{p}_n(x)\>dx
- \int_{\bbR^d}f(x)h_n^0(x)\>dx=\bar{o}(s_n)$
as $n\iku\infty$.
{From} the definition of $h_n^0(x)$,
it is easy to show that
$\calx_\infty$ has a differentiable density $p^{\calx_\infty}$ and that
$h_n^0(x)=
\frac{d\Psi_n}{dx}(x)-s_n
\mbox{div}
\Bigl\{\E[\caly_\infty\>|\>\calx_\infty=x]p^{\calx_\infty}(x)\Bigr\}$.
The existence of the integral
$\int_{\bbR^d}f(x)h_n^0(x)dx$ is ensured
as a consequence under the assumptions of Theorem~\ref{ThmSY}.
\end{proof}

\section{Convergence of martingales and quadratic variations}\label{App0.5}

This section collects the proofs of technical results stated in 
Section~\ref{071219-2}. The major part of them make use of stochastic analysis 
and aim at controlling quadratic variations and covariations of some martingales. 

\begin{proof}[Proof of Lemma~\ref{lem-decomp}]
Let us denote by $\Phi^{1}_n$ the difference $b_n^{-1/2}(\hat{\theta}_n-\theta)-M^n_T$ and write it in the form
$\Phi^{1}_n =b_n^{-1/2}(\Phi^2_n+\Phi^3_n)$, where
\begin{align*}
\Phi^2_n&=\sum_{I\!,J}\>\KIJ \Big(\{(I\beta_1)\cdot t\}_T \times
\{(J\sigma_2)\cdot B_2\}_T+\{(J\beta_2)\cdot t)\}_T \times
\{(I\sigma_1)\cdot B_1\}_T\Big),\\
\Phi^3_n&=\sum_{I\!,J}\>\KIJ \{(I\beta_1)\cdot t\}_T \times
\{(J\beta_2)\cdot t\}_T
\end{align*}
Since we will be interested in applying martingale limit theorems, it is convenient to decompose $\Phi_n^\ell$s in
a sum of a martingale and a bounded variation process. This is achieved by the It\^o formula,
\begin{align*}
\Phi^2_n &= \sum_{I\!,J}\> \KIJ \Big(\Bigl\{
\{((I\beta_1)\cdot t)(J\sigma_2)\}\cdot B_2\Bigr\}_T
+\Bigl\{ \{((J\beta_2)\cdot
t)(I\sigma_1)\}\cdot B_1\Bigr\}_T\Big)\\
&\qquad+\sum_{I\!,J}\> \KIJ \Big(\Bigr\{
\{((J\sigma_2)\cdot B_2)(I\beta_1)\}\cdot t\Bigr\}_T
+\Bigr\{ \{((I\sigma_1)\cdot
B_1)(J\beta_2)\}\cdot t\Bigr\}_T\Big).
\end{align*}
The last two terms in this expression need some further analysis. Let us introduce the notation
$
\Phi^{21}_n =\sum_{I\!,J}\>
\KIJ \big\{ \{((J\sigma_2)\cdot B_2)(I\beta_1)\}\cdot
t\big\}_T$.
Since $((J^j\sigma_2)\cdot B_2)_s=0$ for $s\in(0,T^{j-1})$, 
\bea\label{180803-1}
\Phi^{21}_n &=& \sum_{i,j}\> K_{ij}\Bigr\{
\{((J^j\sigma_2)\cdot B_2)I^i [\beta_{1,T^{j-1}}
+(1_{(T^{j-1},\infty)}\beta^{[0]}_1)\cdot t \nn\\
&&
+(1_{(T^{j-1},\infty)}\beta^{[1]}_{11})\cdot B_1
+(1_{(T^{j-1},\infty)}\beta^{[1]}_{12})\cdot B_2] \}\cdot t\Bigr\}_T
\nn\\
&=& \sum_{i,j}\> K_{ij}\big(\big\{(J^j\sigma_2)\cdot B_2\big\}I^i\beta_{1,T^{j-1}}\big)\cdot t_T +
\sum_{I\!,J}\>\KIJ \bigl(I\{(J\sigma_2\beta^{[1]}_{11})\cdot \langle B_2,B_1
\rangle\} \bigr)\cdot t_T \nn\\
&& +\sum_{I\!,J}\>\KIJ \bigl(I\{(J\sigma_2\beta^{[1]}_{12})\cdot \langle B_2,B_2
\rangle\} \bigr)\cdot t_T +o_P(b_n).
\eea
Let us explain how the last $o_P(b_n)$ is obtained. In fact, the remainder term in the last equation
contains five summands which can all be treated similarly. Let us do it for one of them, which has the form
$\Psi_n=\sum_{I\!,J}\>\KIJ  \Bigl\{
\big(\big[\big\{(J\sigma_2)\cdot B_2\big\}\beta^{[1]}_{11}\big]\cdot B_1\big)I
\Bigr\}\cdot t_T$. We first use that
\begin{align*}
\Psi_n&=\sum_{I}\Bigl\{\big[\big(\big\{(J(I)\sigma_2)\cdot B_2\big\}\beta^{[1]}_{11}\big]\cdot B_1\big)I\Bigr\}\cdot t_T\\
&=\int_0^T\Big(\int_0^s\Big[\int_s^T\sum_{I} \1_I(t)\1_{J(I)}(u)\,dt\Big]\sigma_{2,u}\,dB_{2,u}\Big)\beta_{11,s}^{[1]}\,dB_{1,s}.
\end{align*}
Then, by the Cauchy-Schwarz inequality
and the martingale property of the stochastic integral, we get
\begin{align*}
\Ex^\Pi[\Psi_n^2]&\le \int_0^T\Big(\int_0^s\Big[\int_s^T\sum_{I} \1_I(t)\1_{J(I)}(u)\,dt\Big]^2\sigma_{2,u}^2\,du\Big)\big(\Ex^\Pi[(\beta_{11,s}^{[1]})^4]\big)^{1/2}\,ds\\
&\le C\int_0^T\int_0^s\Big[\sum_{I} |I|\1_{J(I)}(u)\1_{J(I)}(s)\Big]^2\,du\,ds\le C\sum_{I,I'} |I|\,|I'|\,|J(I)\cap J(I')|^2\le C r_n^3
\end{align*}
under the assumption that $\max_{t\in[0,T]}\Ex[(\beta_{11,t}^{[1]})^4]$ and $\max_{t\in[0,T]} \sigma_{2,t}$ are finite.
Now, interchanging the order of integrations, the first summand in the RHS of (\ref{180803-1}) can be rewritten as follows
\begin{align}\label{180803-2}
\{((J\sigma_2)\cdot B_2)I\beta_{1,T^{j-1}}\}\cdot t_T &= \Bigl\{
\{(S^{i}-S^{i-1}\vee\cdot)_+J\sigma_2\beta_{1,T^{j-1}}\}\cdot
B_2 \Bigr\}_T.
\end{align}
Using the same kind of arguments, one can check that the term $\Phi_n^{22}=\Phi_n^{2}-\Phi_n^{21}$ admits the representation
\begin{align}\label{180803-3}
\Phi^{22}_n &:= \sum_{I\!,J} \KIJ \Bigr\{ \{((I\sigma_1)\cdot
B_1)(J\beta_2)\}\cdot t\Bigr\}_T \nn\\
&= \sum_{i,j}K_{ij}\big(\big\{\big((T^j-T^{j-1}\vee\cdot)_+I\sigma_1\beta_{2,S^{i-1}}\big)\!\cdot\! B_1\big\}_T
\nonumber\\
&\quad +\sum_{I\!,J}\>
\KIJ \bigl(J\big\{(I\sigma_1\beta^{[1]}_{22})\!\cdot\! \langle B_2,B_1
\rangle +(I\sigma_1\beta^{[1]}_{21})\cdot \langle B_1,B_1
\rangle\big\} \bigr)\cdot t_T +o_p(b_n).
\end{align}
Combining (\ref{180803-1})-(\ref{180803-3}) and using that $\langle B_1,B_1
\rangle_t=\langle B_2,B_2\rangle_t=t$, $\langle B_2,B_1\rangle_t=\int_0^t\rho_s\,ds$ we get the
desired result.
\end{proof}

\begin{proof}[Proof of Lemma~\ref{lem5.6}] We will prove only the first relation, the proof of the second being quite similar.
Consider the case $\nu=1$, the case $\nu=2$ can be treated similarly
in view of the relation $\langle B_1,B_2
\rangle_t=\int_0^t\rho_s\,ds$ and the boundedness of $\rho$. To
simplify subsequent formulae, let us denote
$\xi^{[11]}=b_n^{-1/2}\sum_{ij}\KIJ \{(J\sigma_2\cdot
B_2)I\sigma_1\}\cdot\langle B_1,B_1 \rangle$. In other words,
$\xi^{[11]}$ is a random process indexed by $t\in[0,T]$ defined by
\begin{align*}
b_n^{1/2}\xi_t^{[11]}&=\sum_{I\!,J} \KIJ \int_0^t
\1_{I}(u)\sigma_{1,u}\int_0^u\1_{J}(s)\sigma_{2,s}\,dB_{2,s}\,du\\
&=\sum_{I\!,J} \KIJ \int_0^t\1_{J}(s)\sigma_{2,s}
\int_s^t\1_{I}(u)\sigma_{1,u}\,du\,dB_{2,s}\\
&= \int_0^t\sum_{J}\1_{J}(s)\sigma_{2,s}
\int_s^t\1_{I(J)}(u)\sigma_{1,u}\,du\,dB_{2,s}.
\end{align*}
The latter expression implies that conditionally to $\Pi_{n}$,
$\xi^{[11]}$ is a Gaussian process with zero mean. Moreover,
\begin{align*}
\Ex^{\Pi}[(\xi^{[11]}_t)^2]&=b_n^{-1}\sum_J\int_0^t\1_{J}(s)\sigma_{2,s}^2
\bigg(\int_s^t\1_{I(J)}(u)\sigma_{1,u}\,du\bigg)^2\,ds\\
&\leq b_n^{-1}\|\sigma_1^2\|_\infty\|\sigma_2^2\|_\infty\sum_J
|J||I(J)|^2\leq C b_n^{-1}r_n^2,
\end{align*}
where $C$ is a positive constant. This yields the desired result.
\end{proof}

\begin{proof}[Proof of Lemma~\ref{lem8}]
One easily checks that
\begin{align}\label{26}
\int_0^t\bbH^{1,n}_s\bbH^{2,n}_s\,d\langle B_1,B_2 \rangle_s=
\int_0^t\sum_{I\!,J}h_s\tilde{K}^n_{ij}(s)(J\sigma_2\cdot
B_2)_s(I\sigma_1\cdot B_1)_s\,ds.
\end{align}
To prove the convergence of this expression, we apply the It\^o
formula to the product $(J\sigma_2\cdot B_2)_s(I\sigma_1\cdot
B_1)_s$:
$$
(J\sigma_2\cdot B_2)_s(I\sigma_1\cdot B_1)_s=\{(J\sigma_2\cdot
B_2)I\sigma_1\cdot B_1\}_s+\{(I\sigma_1\cdot
B_1)J\sigma_2\cdot B_2\}_s+\{(IJh)\cdot t\}_s.
$$
One can show that the contribution of the first two terms is
asymptotically negligible, that is
\begin{align}\label{27}
\int_0^t\sum_{I\!,J}h_s\tilde{K}^n_{ij}(s)(\{(J\sigma_2\cdot
B_2)I\sigma_1\cdot B_1\}_s+\{(I\sigma_1\cdot
B_1)J\sigma_2\cdot B_2\}_s)\,ds&\xrightarrow[n\to\infty]{p} 0,
\end{align}
Thus, the main term is
\begin{align}\label{28}
\int_{0}^t\sum_{I\!,J}h_s\tilde{K}^n_{ij}(s)\{(IJh)\cdot
t\}_s\,ds.
\end{align}
To prove (\ref{27}), we show the convergence in $L^2$. More
rigorously, using the notation
$\check{K}_{IJ}^{n}(s)=\int_s^{t}\tilde \KIJ ^n(s)h_s\,ds$ and interchanging the order of integrals, we get
\begin{align*}
\Ex^\Pi\Big(\int_0^t&\sum_{I\!,J}h_s\tilde{K}^n_{ij}(s)\{(J\sigma_2\cdot
B_2)I\sigma_1\cdot B_1\}_s\,ds\Big)^2=\Ex^\Pi\Big(\Big\{\sum_{I\!,J} \check{K}_{ij}^n(J\sigma_2\cdot
B_2)I\sigma_1\cdot B_1\Big\}_t\Big)^2\\
&\le \int_0^t\Ex^\Pi\Big[\Big(\sum_{I\!,J}
\check{K}_{I\!J}^n(u)(J\sigma_2\cdot B_2)_uI_u\sigma_{1,u}\Big)^2\Big]\,du\\
&=\int_0^t\int_0^u\Big(\sum_{I\!,J}
\check{K}_{I\!J}^n(u)I_u\sigma_{1,u}J_v\sigma_{2,v}\Big)^2dvdu\\
&\le C b_n^{-2}\int_0^T\int_0^Tr_n^2\sum_{I\!,J}\Big(K_{I\!J}\1_{I}(u)\1_{J}(v)\Big)\,dvdu
\le Cb_n^{-2}r_n^3.
\end{align*}
Let us show now that the term (\ref{28}) converges in probability.
Simple algebra allows us to rewrite that term in the form
\begin{align*}
\frac1{2b_n}\sum_{I}\Big(\int_0^t h_s\1_I(s)\,ds\Big)^2+
\frac1{2b_n}\sum_{J}\Big(\int_0^t h_s\1_J(s)\,ds\Big)^2-
\frac1{2b_n}\sum_{I\!,J}\Big(\int_0^t h_s\1_{I\cap J}(s)\,ds\Big)^2,
\end{align*}
which in turn is nothing else but
$
\int_{[0,t]^2} h_sh_{s'}\1_{\{s\vee s'\le
t\}}\,\{\calv^{I}_{n}+\calv^{J}_{n}-\calv^{I\cap
J}_{n}\}(ds,ds').
$
The weak convergence of measures stated in Assumption \textsf{P1} completes the proof of the first assertion.
The proof of the second assertion is quite similar and therefore is omitted.
\end{proof}

\begin{proof}[Proof of Lemma~\ref{lem3.8}]
Using the representations of $M^n$ and ${N}^n$ as stochastic
integrals, we get
\vglue-20pt
\begin{equation}\label{29}
\langle
M^n,N^n\rangle_t=\int_0^t\big(\bbH^{1,n}_s\bbG^{1,n}_s +
\bbH^{1,n}_s\bbG^{2,n}_s\rho_s+\bbH^{2,n}_s\bbG^{1,n}_s\rho_s+\bbH^{2,n}_s\bbG^{2,n}_s\big)\,ds.
\end{equation}
Let us denote by $\bbG^{11,n}$ the first summand
$b_n^{-1}\sum_{I\!,J}\> \KIJ \{((J\beta_2)\cdot t)(I\sigma_1)\}$
in $\bbG^{1,n}$ and let us show that
$\int_0^t\bbH^{1,n}_s\bbG^{11,n}_s\,ds$ tends to zero in probability
as $n\to\infty$. Simple algebra yields
\begin{align*}
\int_0^t\bbH^{1,n}_s\bbG^{11,n}_s\,ds&= b_n^{-3/2}\int_0^t\sum_I
I_s\sigma_{1,s}^2\int_0^s J(I)_u\beta_{2,u}\,du\int_0^s J(I)_u\sigma_{2,u}\,dB_{2,u}\,ds\\
&=b_n^{-3/2}\int_0^t\sum_I
I_s\sigma_{1,s}^2\beta_{2,a_{J(I)}}(s-a_{J(I)})\int_0^s J(I)_u\sigma_{2,u}\,dB_{2,u}\,ds\\
&\quad +b_n^{-3/2}\!\!\int_0^t\sum_I I_s\sigma_{1,s}^2\!\!\int_0^s
\!J(I)_u(\beta_{2,u}-\beta_{2,a_{J(I)}})\,du\! \int_0^s\!
J(I)_u\sigma_{2,u}\,dB_{2,u}\,ds\\
&:=\calt_{1,n}+\calt_{2,n},
\end{align*}
where we denoted by $a_{J(I)}$ the left endpoint of the interval $J(I)$.
Let us show that both $\calt_{1,n}$ and $\calt_{2,n}$ tend to
zero in probability. Indeed,
\begin{align*}
\Ex^\Pi[\calt_{1,n}^2]&=b_n^{-3}\Ex^\Pi\Big[\int_0^t\sigma_{2,u}^2\Big(\sum_I
J(I)_u\int_u^tI_s\sigma_{1,s}^2(s-a_{J(I)})\,ds\beta_{2,a_{J(I)}}\Big)^2du\Big]\\
&\le Cb_n^{-3}\Ex^\Pi\Big[\int_0^t\Big(\sum_I J(I)_u
|I||J(I)||\beta_{2,a_{J(I)}}|\Big)^2du\Big]
\le Cb_n^{-3}r_n^4\sup_{t\in[0,T]} \Ex[\beta_{2,t}^2],
\end{align*}
and, after applying the Cauchy-Schwarz inequality several times,
\begin{align*}
\Ex^\Pi[\calt_{2,n}^2]&\le b_n^{-3}\Ex^\Pi\Big[\int_0^t \sum_I
I_s\sigma_{1,s}^4\Big(\int_0^s
J(I)_u(\beta_{2,u}-\beta_{2,a_{J(I)}})\,du \int_0^s
J(I)_u\sigma_{2,u}\,dB_{2,u}\Big)^2\,ds\Big]\\
&\le Cb_n^{-3}\Ex^\Pi\Big[\int_0^t \sum_I I_s |J(I)|^{4}\,ds\Big]\le
b_n^{-3}r_n^4.
\end{align*}
Similar arguments yield the convergence to zero of the sequence
$\Ex[(\int_0^t\bbH^{1,n}_s\bbG^{12,n}_s\,ds)^2]$. Thus
$\int_0^t\bbH^{1,n}_s\bbG^{1,n}_s\,ds$ tends to zero in probability
as $n\to\infty$. The convergence to zero of the other terms of the sum in the right-hand side of (\ref{29})
can be shown similarly. 
\end{proof}

\begin{proof}[Proof of Lemma~\ref{lem3.9}]
Let us prove the first assertion, the proof of the second one being completely similar.
Since $N^n=\bbG^{1,n}\cdot B_1+\bbG^{n,2}\cdot B_2$ with $\bbG^{1,n}$ and $\bbG^{2,n}$ defined in Lemma \ref{lem-decomp}, we have
$\langle N^n,B_1\rangle_t=\int_0^t(\bbG^{1,n}_s+\bbG^{n,2}_s\rho_s)\,ds$. It is easily seen that
\begin{align*}
\int_0^t \bbG^{1,n}_s\,ds&=b_n^{-1}\sum_{i,j}\> K_{ij}\int_0^t \Big(((J^j\beta_2)\cdot t)_sI^i_s\sigma_{1,s}
+(T^{j}-T^{j-1}\vee s)_+I^i_s\sigma_{1,s}\beta_{2,S^{i-1}}\Big)\,ds\\
&=\int_0^t\int_0^t \big(\beta_{2,u}\sigma_{1,s}\1(u\le s)
+\1(u> s)\sigma_{1,s}\beta_{2,s}\big)\,\calv_{n}^{I\!,J}(ds,du)\\
&\qquad +b_n^{-1}\sum_{i,j}\> K_{ij}\int_0^t (T^{j}-T^{j-1}\vee s)_+ I^i_s\sigma_{1,s}(\beta_{2,S^{i-1}}-\beta_{2,s})\,ds.
\end{align*}
Since $\beta_2$ is an It\^ o process with $\beta_2^{[0]}$, $\beta_{21}^{[1]}$ and $\beta_{22}^{[1]}$
being uniformly bounded in $L^2$-norm, the expectation $\Ex^\Pi[|\beta_{2,S^{i-1}}-\beta_{2,s}|]$ is bounded up to a constant
factor by $|I|^{1/2}$.  This implies that the second term in the last formula is $o_p(b_n^{-1}\sum_{I\!,J} \KIJ |I|^{3/2}|J|)=o_p(r_n^{3/2}b_n^{-1})$,
while the first term converges to $\int_0^t \beta_{2,s}\sigma_{1,s}\,\calv^{I\!,J}(ds)$ in view of Assumption \textsf{P1}.

Identical arguments imply the convergence of  $\int_0^t \bbG^{n,2}_s\rho_s\,ds$ to $\int_0^t \beta_{1,s}\sigma_{2,s}\rho_s\calv^{I\!,J}(ds)$
and the assertion of the lemma follows.
\end{proof}

\begin{proof}[Proof of Lemma~\ref{lem3.10}]
Since $N^n=\bbG^{1,n}\cdot B_1+\bbG^{2,n}\cdot B_2$, its quadratic variation is given by
$\langle N^n,N^n\rangle=\big[(\bbG^{1,n})^2+2\bbG^{1,n}\bbG^{2,n}\rho+(\bbG^{2,n})^2\big]\cdot t$.
Using the semimartingale decomposition of $\beta_2$, one checks that
\begin{align*}
\int_0^t(\bbG^{1,n}_s)^2\,ds&= b_n^{-2}\int_0^t \Big(\sum_{I\!,J}\KIJ\int_0^tJ_u\beta_{2,u}\,du\,I_s\sigma_{1,s}\Big)^2ds
+o_p(r_n^3b_n^{-2})\\
&= b_n^{-2}\int_0^t \sum_I I_s\sigma_{1,s}^2\Big(\int_0^tJ(I)_u\beta_{2,u}\,du\Big)^2ds
+o_p(r_n^3b_n^{-2})\\
&= \int_{[0,t]^3} \sigma_{1,s}^2\beta_{2,u}\beta_{2,u'}\,\calv_{n}^{I\!,J,J'}(ds,du,du')
+o_p(r_n^3b_n^{-2}).
\end{align*}
Analogous computations show that
\begin{align*}
\int_0^t(\bbG^{2,n}_s)^2\,ds
&= \int_{[0,t]^3} \beta_{1,s}\beta_{1,s'}\sigma_{2,u}^2\,\calv_{t}^{I,I'\!\!,J}(ds,ds',du)
+o_p(r_n^3b_n^{-2}),\\
\int_0^t\bbG^{1,n}_s\bbG^{2,n}_s\rho_s\,ds
&= \int_{[0,t]^3} \beta_{2,u}\beta_{1,s'}\sigma_{1,s}\sigma_{2,s}\rho_s\,\calv_{t}^{J(I),I(J),I\cap J}(du,ds',ds)
+o_p(r_n^3b_n^{-2}).
\end{align*}
Now, the desired result follows from Assumption \textsf{P2}.
\end{proof}

\section{Technical results on Poisson point processes}\label{App1}
\begin{lemma}\label{lemA1}
For every $\lambda>0$ it holds that
$\sum_{k=0}^\infty \frac{\lambda^k}{k!(k+2)}=
\lambda^{-2}({\lambda e^{\lambda}-e^\lambda+1}).$
\end{lemma}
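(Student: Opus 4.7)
The plan is to establish the identity by the standard trick of reducing the series with a $(k+2)$ denominator to a differentiated series without it, and then integrating.

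First I would multiply through by $\lambda^2$, so the claim becomes
$$
S(\lambda):=\sum_{k=0}^\infty \frac{\lambda^{k+2}}{k!(k+2)}=\lambda e^{\lambda}-e^\lambda+1.
$$
The key observation is that
$$
\frac{d}{d\lambda}\Big[\frac{\lambda^{k+2}}{k!(k+2)}\Big]=\frac{\lambda^{k+1}}{k!},
$$
so termwise differentiation (legitimate because the power series has infinite radius of convergence) yields
$$
S'(\lambda)=\sum_{k=0}^\infty \frac{\lambda^{k+1}}{k!}=\lambda e^{\lambda}.
$$

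Next I would recover $S$ by integrating from $0$ to $\lambda$, noting $S(0)=0$. An integration by parts gives
$$
S(\lambda)=\int_0^\lambda t\,e^t\,dt=\bigl[t e^t\bigr]_0^\lambda-\int_0^\lambda e^t\,dt=\lambda e^\lambda-e^\lambda+1,
$$
which is exactly the claimed formula after dividing by $\lambda^2$. There is no real obstacle here beyond justifying the termwise differentiation, which is immediate from uniform convergence of the power series on compact subsets of $\mathbb{R}$.
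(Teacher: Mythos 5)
Your proof is correct, but it takes a different route from the paper's. You reduce the series to the elementary integral $\int_0^\lambda t e^t\,dt$ by termwise differentiation of the power series (justified, as you note, by its infinite radius of convergence) and then integrate by parts. The paper instead uses the purely algebraic identity $\frac{1}{k!(k+2)}=\frac{1}{(k+1)!}-\frac{1}{(k+2)!}$, which splits the sum into two shifted exponential series, $\sum_k \frac{\lambda^k}{(k+1)!}=\lambda^{-1}(e^\lambda-1)$ and $\sum_k \frac{\lambda^k}{(k+2)!}=\lambda^{-2}(e^\lambda-1-\lambda)$, whose difference gives the claimed formula directly. The paper's argument is a one-line coefficient manipulation requiring no analysis at all; yours trades that algebraic trick for a calculus argument that is arguably more systematic (it would handle a general denominator $k+m$ by iterating the same differentiate-and-integrate step, where finding the analogous telescoping decomposition takes slightly more thought). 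Both are fully rigorous and of comparable length.
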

\begin{proof} It follows from the equality $1/(k!(k+2))=1/((k+1)!)-1/((k+2)!)$
and the power series expansion of the exponential function.
\end{proof}

\begin{lemma}\label{lemA 1.5}
Let $\mP$ be a homogeneous Poisson point process on $\bbR$ with
intensity $\lambda>0$ and let $a\in\bbR$.
For every $\omega$, let $I_a(w)$ be the interval that contains $a$ and that is an element of
the partition of $\bbR$ generated by $\mP$. Then
$|I_a|$ is distributed according to the law $\text{\rm Gamma}(2,\lambda)$.
\end{lemma}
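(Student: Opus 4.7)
The plan is to decompose the length of the interval $I_a$ as the sum of two independent exponentially distributed random variables, exploiting the memoryless and independence properties of the Poisson process on disjoint sub-intervals of $\bbR$.

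First, I would introduce the random variables
\[
R = \inf\{t\in\mathcal P : t > a\} - a,\qquad L = a - \sup\{t\in\mathcal P : t \le a\},
\]
representing respectively the distances from $a$ to the nearest point of $\mathcal P$ on its right and on its left. Almost surely these quantities are finite (since $\mathcal P$ has infinitely many points on each half-line), and by the very definition of the partition generated by $\mathcal P$ we have the identity $|I_a| = L + R$.

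Next I would establish that $L$ and $R$ are independent and each $\text{Exp}(\lambda)$-distributed. The key input here is the defining property of a homogeneous Poisson process: the restrictions $\mathcal P\cap (-\infty,a]$ and $\mathcal P\cap (a,\infty)$ are independent, hence $L$ and $R$ are independent. For the marginal distribution of $R$, I would compute
\[
\Pb(R > r) = \Pb\bigl(\mathcal P \cap (a, a+r] = \emptyset\bigr) = e^{-\lambda r}, \qquad r\ge 0,
\]
using the fact that the number of points of $\mathcal P$ in $(a,a+r]$ is Poisson with parameter $\lambda r$. The same argument applied to the reflected process (or to $(a-r,a]$) yields $\Pb(L>r)=e^{-\lambda r}$.

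Finally, since $L$ and $R$ are independent $\text{Exp}(\lambda)$ random variables, their sum $|I_a|=L+R$ follows a $\text{Gamma}(2,\lambda)$ distribution; this can be seen either by a direct convolution of the exponential densities, giving the density $\lambda^2 t\,e^{-\lambda t}\1_{\{t\ge 0\}}$, or by invoking the standard fact that a sum of $k$ i.i.d.\ $\text{Exp}(\lambda)$ variables is $\text{Gamma}(k,\lambda)$. There is no real obstacle here; the only subtle point is the (classical) ``inspection paradox'' observation that $|I_a|$ does \emph{not} have the same law as a generic interval length $\text{Exp}(\lambda)$, which makes it worth writing out the decomposition $|I_a|=L+R$ explicitly rather than appealing to translation invariance in the wrong way.
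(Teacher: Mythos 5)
Your proof is correct and follows essentially the same route as the paper's: decompose $|I_a|=L+R$ using the independence of the restrictions of $\mP$ to the two half-lines at $a$, identify each piece as $\mathrm{Exp}(\lambda)$, and conclude by the standard convolution fact for the Gamma distribution. The paper states this more tersely but the argument is identical; your explicit treatment of the marginals and the remark on the inspection paradox are fine additions but not a different method.
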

\begin{proof}
W.l.o.g.\ we can assume that $a=0$. Since the restrictions of $\mP$ on $(-\infty,0)$ and $[0,\infty)$
are two independent Poisson processes, the law of $|I_a|$ coincides with the law of the sum of
two i.i.d.\ random variables exponentially distributed with parameter $\lambda$.
Thus the assertion of the lemma follows from the well known properties of the Gamma distribution.
\end{proof}

\begin{lemma}\label{lemA2}
Let $\mP$ be a homogeneous Poisson point process on $\bbR$ with
intensity $\lambda>0$ and let $I=[a,b]\subset\bbR$ be some interval.
For every $\omega$, let us denote by $N=N(\omega)$ the number of
points of $\mP(\omega)$ lying in $I$ and by $t_i=t_i(\omega)$,
$i=1,\ldots,N$ the ordered sequence of these points. Then
$$
\Ex\bigg[\sum_{i=0}^{N} (t_{i+1}-t_i)^2\bigg]=
\frac{2(|I|\lambda-1+e^{-|I|\lambda})}
{\lambda^2},
$$
where we used $t_0=a$ and $t_{N+1}=b$.
\end{lemma}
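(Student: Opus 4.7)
The plan is to condition on $N$, use that conditionally on $\{N=n\}$ the points $t_1<\cdots<t_n$ are distributed as the order statistics of $n$ i.i.d.\ uniforms on $I$, and reduce the computation to the Lemma~\ref{lemA1} series. Throughout, write $L=|I|$.

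First I would note that given $N=n\ge 1$, the vector of normalized spacings $\bigl((t_1-a)/L,(t_2-t_1)/L,\ldots,(b-t_n)/L\bigr)$ is a Dirichlet$(1,\ldots,1)$ vector on the $n$-dimensional simplex (this is the classical representation of uniform order statistics via normalized exponentials, or equivalently by a change of variables on the uniform density on $\{a<u_1<\cdots<u_n<b\}$). In particular the $n+1$ spacings are exchangeable, and each marginal $U_i=(t_{i+1}-t_i)/L$ has density $n(1-x)^{n-1}$ on $[0,1]$, so that
\begin{equation*}
\Ex\bigl[(t_{i+1}-t_i)^2\,\bigl|\,N=n\bigr]=L^2\Ex[U_i^2]=\frac{2L^2}{(n+1)(n+2)}.
\end{equation*}
Summing over the $n+1$ spacings $i=0,\ldots,n$ gives
\begin{equation*}
\Ex\Bigl[\sum_{i=0}^N(t_{i+1}-t_i)^2\,\Bigl|\,N=n\Bigr]=\frac{2L^2}{n+2},
\end{equation*}
a formula that also holds in the degenerate case $n=0$ (where the sum is just $L^2$).

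Next I would take expectation with respect to $N\sim\mathrm{Poisson}(L\lambda)$:
\begin{equation*}
\Ex\Bigl[\sum_{i=0}^N(t_{i+1}-t_i)^2\Bigr]=2L^2 e^{-L\lambda}\sum_{n=0}^{\infty}\frac{(L\lambda)^n}{n!\,(n+2)}.
\end{equation*}
Applying Lemma~\ref{lemA1} with the parameter $L\lambda$ in place of $\lambda$, the series equals $(L\lambda)^{-2}\bigl(L\lambda e^{L\lambda}-e^{L\lambda}+1\bigr)$, and after multiplication the exponentials cancel, leaving the announced expression $2(L\lambda-1+e^{-L\lambda})/\lambda^2$.

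There is no real obstacle here: the only points requiring a little care are the exchangeability of Poisson spacings on a bounded interval (which must be done conditionally on $N$, since unconditionally the edge spacings $t_1-a$ and $b-t_n$ are not exponential), and the algebraic reduction at the end, which is exactly Lemma~\ref{lemA1}.
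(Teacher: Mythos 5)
Your proof is correct and follows essentially the same route as the paper's: conditioning on $N$, reducing each spacing's second moment to $2/[(n+1)(n+2)]$ via the uniform order-statistics representation, summing over the $n+1$ spacings, and invoking Lemma~\ref{lemA1} to evaluate the resulting Poisson series. The only cosmetic difference is that you obtain the per-spacing moment from the Dirichlet/exchangeability structure of the spacings rather than from the explicit joint density of consecutive order statistics.
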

\begin{proof}Without loss of generality, we assume that $I=[0,1]$.
We use the fact that conditionally to $N(\omega)=k$, the random
vector $(t_1,\ldots,t_k)$ have the same distribution as
$(U_{(1)},\ldots,U_{(k)})$, where $U_1,\ldots,U_k$ are independent
uniformly in $[0,1]$ distributed random variables and $U_{(1)}$,
$\ldots$, $U_{(k)}$ are the corresponding order statistics. Since
the joint density of $(U_{(i)},U_{(i+1)})$ is given by
$$
f_{(U_{(i)},U_{(i+1)})}(x,y)=\frac{k!}{(i-1)!(k-i-1)!}\;x^{i-1}(1-y)^{k-i-1}
\1_{\{x\le y\}},
$$
the expectation $\Ex[(U_{(i+1)}-U_{(i)})^2]$ is equal to $2/[(k+1)(k+2)]$.
It is easily seen that $\Ex[U_{(1)}^2]=\Ex[(1-U_{(k)})^2]=2/[(k+1)(k+2)]$.
Therefore,
$$
\Ex\bigg[\sum_{i=0}^{N} (t_{i+1}-t_i)^2\bigg]=\sum_{k=0}^\infty
\bigg(\sum_{i=0}^k\frac{2}{(k+1)(k+2)}\bigg)\Pb(N=k)=
\sum_{k=0}^\infty \frac{2e^{-\lambda}\lambda^k}{k!(k+2)}.
$$
The desired result follows now from Lemma~\ref{lemA1}.
\end{proof}

\begin{lemma}\label{lemA3}
Let $\zeta_1\sim\msE(\lambda_1)$ and $\mP^2$ be a Poisson process
with intensity $\lambda_2$ independent of $\zeta_1$. Let us denote by
$\Pi^2_\zeta$ the partition of $[0,\zeta_1]$ generated by $\mP^2$.
Then
$$
\Ex\bigg[\zeta_1\sum_{J\in\Pi^2_\zeta} |J|^2\bigg]=\frac{6\lambda_1+4\lambda_2}{\lambda_1^2
(\lambda_1+\lambda_2)^2}.
$$
\end{lemma}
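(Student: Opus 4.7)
The plan is to use iterated expectation, conditioning on $\zeta_1$. Given $\zeta_1 = t$, the random variable $\sum_{J \in \Pi^2_\zeta} |J|^2$ is exactly the quantity computed in Lemma~\ref{lemA2} applied to the interval $I = [0,t]$ with Poisson intensity $\lambda_2$, since $\mathcal{P}^2$ is independent of $\zeta_1$. So conditionally,
\[
\Ex\Big[\sum_{J\in\Pi^2_\zeta}|J|^2 \,\Big|\, \zeta_1=t\Big] = \frac{2(t\lambda_2 - 1 + e^{-t\lambda_2})}{\lambda_2^2}.
\]

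Next, I would multiply by $t$, take expectation against the exponential density $\lambda_1 e^{-\lambda_1 t}$, and evaluate the three resulting gamma-type integrals:
\[
\int_0^\infty t^2 e^{-\lambda_1 t}\,dt = \frac{2}{\lambda_1^3},\quad
\int_0^\infty t e^{-\lambda_1 t}\,dt = \frac{1}{\lambda_1^2},\quad
\int_0^\infty t e^{-(\lambda_1+\lambda_2) t}\,dt = \frac{1}{(\lambda_1+\lambda_2)^2}.
\]
This yields
\[
\Ex\Big[\zeta_1\sum_{J\in\Pi^2_\zeta}|J|^2\Big] = \frac{2\lambda_1}{\lambda_2^2}\Big(\frac{2\lambda_2}{\lambda_1^3}-\frac{1}{\lambda_1^2}+\frac{1}{(\lambda_1+\lambda_2)^2}\Big) = \frac{4}{\lambda_1^2\lambda_2}-\frac{2}{\lambda_1\lambda_2^2}+\frac{2\lambda_1}{\lambda_2^2(\lambda_1+\lambda_2)^2}.
\]

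Finally, I would reduce to the common denominator $\lambda_1^2\lambda_2^2(\lambda_1+\lambda_2)^2$ and verify that the numerator equals $2\lambda_2^2(3\lambda_1+2\lambda_2)$; after cancellation of $\lambda_2^2$ this gives $(6\lambda_1+4\lambda_2)/[\lambda_1^2(\lambda_1+\lambda_2)^2]$, the claimed expression. There is no real obstacle here: the only thing to watch is that the terms $4\lambda_2(\lambda_1+\lambda_2)^2 - 2\lambda_1(\lambda_1+\lambda_2)^2 + 2\lambda_1^3$ simplify correctly, which they do via $2(\lambda_1+\lambda_2)^2(2\lambda_2-\lambda_1)+2\lambda_1^3 = 6\lambda_1\lambda_2^2+4\lambda_2^3$. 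So the entire proof reduces to one conditioning step, a citation of Lemma~\ref{lemA2}, and elementary integration.
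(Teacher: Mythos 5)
Your proof is correct and follows essentially the same route as the paper's: condition on $\zeta_1$, invoke Lemma~\ref{lemA2} for the interval $[0,\zeta_1]$ with intensity $\lambda_2$, and then compute $\frac{2}{\lambda_2}\Ex[\zeta_1^2]-\frac{2}{\lambda_2^2}\Ex[\zeta_1]+\frac{2}{\lambda_2^2}\Ex[\zeta_1 e^{-\lambda_2\zeta_1}]$ via the same three elementary integrals. The intermediate expression $\frac{4}{\lambda_1^2\lambda_2}-\frac{2}{\lambda_1\lambda_2^2}+\frac{2\lambda_1}{\lambda_2^2(\lambda_1+\lambda_2)^2}$ and the final algebraic reduction both match the paper exactly.
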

\begin{proof}
By rescaling and by using Lemma~\ref{lemA2}, we get
$$
\Ex\bigg[\sum_{J\in\Pi^2_\zeta} |J|^2\;\big|\,\zeta_1\bigg]=
\frac{2\zeta_1^2(\lambda_2\zeta_1-1+e^{-\lambda_2\zeta_1})}{\lambda_2^2\zeta_1^2}.
$$
Therefore,
\begin{align*}
\Ex\bigg[\zeta_1\sum_{J\in\Pi^2_\zeta} |J|^2\bigg]&=
\frac{2}{\lambda_2}\Ex[\zeta_1^2]-\frac{2}{\lambda_2^2}\;\Ex[\zeta_1]+\frac{2}{\lambda_2^2}
\;\Ex[\zeta_1e^{-\lambda_2\zeta_1}]\\
&=\frac{4}{\lambda_2\lambda_1^2}-\frac{2}{\lambda_2^2\lambda_1}+
\frac{2}{\lambda_2^2}\frac{\lambda_1}{(\lambda_1+\lambda_2)^2}=\frac{6\lambda_1+4\lambda_2}{\lambda_1^2(\lambda_1+\lambda_2)^2}\ .
\end{align*}
This completes the proof of the lemma.
\end{proof}

\begin{lemma}\label{lemA4}
Let $I=[a,b]$ be an interval of $[0,T]$. If $\mP$ is a Poisson point process with
intensity $\lambda$ and $\Pi$ is the partition of $[0,T]$ generated by $\mP$,
then
\begin{align*}
\Ex\Big[\sum_{J\in\Pi}|J|\KIJ \Big]&=|I|+2\lambda^{-1}-\lambda^{-1}(e^{-\lambda a}+
e^{-\lambda (T-b)}),\\
\Ex \Big[\sum_{J\in\Pi} |J\setminus I|\cdot |J\cap I| \Big]&=
\lambda^{-2}(1-e^{-\lambda |I|}) (2-e^{-\lambda a}- e^{-\lambda (T-b)}).
\end{align*}
\end{lemma}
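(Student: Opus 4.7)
The plan is to establish the two identities by direct computation, relying in each case on a simple geometric/representation trick to avoid bookkeeping by cases over which intervals of $\Pi$ contain the endpoints $a$ and $b$.

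For the first identity, I would observe that $\sum_{J\in\Pi}|J|\KIJ$ is nothing but the Lebesgue measure of the union of those intervals of $\Pi$ that meet $I$. Since those intervals form a consecutive block in the partition, their union is an interval $(L_a,R_b]$, where $L_a$ denotes the largest point of $\mP$ lying in $[0,a]$ (with $L_a=0$ by convention when $\mP\cap[0,a]=\emptyset$) and $R_b$ the smallest point of $\mP$ lying in $[b,T]$ (with $R_b=T$ by convention when $\mP\cap[b,T]=\emptyset$). This works whether $a$ and $b$ fall in the same interval of $\Pi$ or not, and gives the clean decomposition
\[
\sum_{J\in\Pi}|J|\KIJ \;=\; (a-L_a)+|I|+(R_b-b).
\]
By the time-reversal/memoryless property of the Poisson process, $a-L_a$ is distributed as $\min(\tau,a)$ with $\tau\sim\msE(\lambda)$, and similarly $R_b-b$ is distributed as $\min(\tau',T-b)$ with $\tau'\sim\msE(\lambda)$. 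Thus $\Ex[a-L_a]=\int_0^a e^{-\lambda t}\,dt=(1-e^{-\lambda a})/\lambda$, and analogously for $\Ex[R_b-b]$. Summing yields the desired expression.

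For the second identity, I would avoid splitting into cases (which interval holds $a$, which holds $b$, whether they coincide) by rewriting the product as a double integral:
\[
|J\cap I|\cdot|J\setminus I| \;=\; \int_0^T\!\!\int_0^T \1_J(x)\1_J(y)\1_I(x)\1_{I^c}(y)\,dx\,dy.
\]
Summing over $J\in\Pi$ and noting that $\sum_{J\in\Pi}\1_J(x)\1_J(y)$ equals the indicator of the event that $x$ and $y$ lie in the same interval of $\Pi$, which in turn is $\1\{\mP\cap(x\wedge y,x\vee y)=\emptyset\}$, and then taking expectation using $\Pb(\mP\cap(x\wedge y,x\vee y)=\emptyset)=e^{-\lambda|x-y|}$, we obtain
\[
\Ex\Big[\sum_{J\in\Pi}|J\setminus I|\cdot|J\cap I|\Big] \;=\; \int_a^b\int_{[0,T]\setminus[a,b]}e^{-\lambda|x-y|}\,dy\,dx.
\]
Splitting the inner integral over $[0,a)$ and $(b,T]$, the two pieces are elementary. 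Working out the first gives $\lambda^{-2}(1-e^{-\lambda|I|})(1-e^{-\lambda a})$, and by symmetry the second equals $\lambda^{-2}(1-e^{-\lambda|I|})(1-e^{-\lambda(T-b)})$; their sum is precisely the stated right-hand side.

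There is no substantial obstacle: both computations are elementary once the right representation is chosen. The only thing worth watching is that the double-integral trick in the second part is what lets us avoid a tedious case analysis between the configurations ``$a,b$ fall in the same interval of $\Pi$'' and ``they do not''—a direct probabilistic computation along those lines would reproduce the same answer but with noticeably more effort.
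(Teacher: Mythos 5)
Your proof is correct, and for the second identity it takes a genuinely different route from the paper. For the first identity you use the same decomposition the paper does: the union of the intervals of $\Pi$ meeting $I$ is the interval running from the last Poisson point in $[0,a]$ to the first one in $[b,T]$, so the sum equals $(a-L_a)+|I|+(R_b-b)$; you then read $\Ex[a-L_a]=\int_0^a e^{-\lambda t}\,dt=(1-e^{-\lambda a})/\lambda$ directly off the survival function, whereas the paper conditions on the number of points in $[0,a]$ and uses uniform order statistics --- a cosmetic difference. For the second identity the paper instead writes $\sum_J|J\setminus I|\cdot|J\cap I|$ as a sum of two products attached to the intervals straddling $a$ and $b$ and evaluates the expectation via conditional independence and order-statistics computations; carried out carefully, that route requires treating separately the configuration in which no Poisson point falls inside $I$ (so that a single interval contains all of $I$), a case the paper's displayed identity glosses over. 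Your double-integral representation, together with $\Ex\big[\sum_J\1_J(x)\1_J(y)\big]=e^{-\lambda|x-y|}$ and Fubini, reduces everything to the elementary integral $\int_I\int_{[0,T]\setminus I}e^{-\lambda|x-y|}\,dy\,dx$ and sidesteps that case analysis entirely; the two inner integrals indeed evaluate to $\lambda^{-2}(1-e^{-\lambda|I|})(1-e^{-\lambda a})$ and $\lambda^{-2}(1-e^{-\lambda|I|})(1-e^{-\lambda(T-b)})$, giving the stated sum. Your version of the second part is cleaner and more robust; what the paper's version buys is stylistic uniformity between the two identities and reuse of the order-statistics machinery already set up for the neighbouring lemmas in the appendix.
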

\begin{proof}
We can consider the Poisson point process $\mP$ on $[0,T]$ as the
union of three independent Poisson point processes: $\mP_a$ on
$[0,a]$, $\mP_I$ on $I=[a,b]$ and $\mP_b$ on $[b,T]$. Let $t_1\le
\ldots\le t_{N_a}$ (resp. $t_1''\le\ldots\le t_{N_b}''$) be the
points of $\mP_a$ (resp. $\mP_b$). Then $\Ex[\sum_J |J|
\KIJ ]=\Ex[(a-t_{N_a})+|I|+(t_1''-b)]$. For every integer $k\ge
0$, conditionally to $N_a=k$, the random variable $t_{N_a}$ has the
same law as the last order statistic $U_{(k)}$ of a sequence
$U_1,\ldots,U_k$ of i.i.d.\ uniformly in $[0,a]$ distributed random
variables. Therefore, $\Ex[a-t_{N_a}|N_a=k]=a/(k+1)$ and
$$
\Ex[a-t_{N_a}]=\sum_{k=0}^\infty \frac{(a\lambda)^ka}{k!(k+1)}\;e^{-a\lambda}
=\frac{1-e^{-a\lambda}}{\lambda}.
$$
The same arguments yield $\Ex[t_1''-b]=\lambda^{-1}(1-e^{-(T-b)\lambda})$
and the first assertion of the lemma follows.
Using the same notation, we have $\sum_J |J\setminus I|\cdot|J\cap
I|= (a-t_{N_a})(t_1'-a)+(b-t_{N_I}')(t_1''-b)$, where
$t_1'\le\ldots\le t_{N_I}'$ are the points of $\mP$ lying in $I$.
Thanks to the conditional independence of $t_{N_a}$, $(t_1',t_{N_I}')$
and $t_1''$ given $N_a$, $N_I$ and $N_b$, as well as the
representation by means of order statistics of the uniform
distribution we get the second assertion of the lemma.
\end{proof}

\begin{lemma}\label{lemA5}
Let $t>0$ and let $\mP$ be a Poisson process on $[0,t]$ with intensity $\lambda$. We denote by $\Pi$
the random partition of $[0,t]$ generated by $\mP$.
For every continuous function $h:[0,t]^2\to\bbR$, it holds that
\begin{align*}
\lambda\sum_{I\in\Pi}\int_{I\times I} h(s,s')\,ds\,ds'\xrightarrow[\lambda\to\infty]{L^1(P)} 2\int_0^t h(s,s)\,ds.
\end{align*}
\end{lemma}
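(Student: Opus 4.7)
The plan is to upgrade the claimed $L^1(P)$ convergence to $L^2(P)$ convergence, i.e., to show that the first two moments of $X_\lambda:=\lambda\sum_{I\in\Pi}\int_{I\times I}h(s,s')\,ds\,ds'$ converge to $2\int_0^t h(s,s)\,ds$ and $\bigl(2\int_0^t h(s,s)\,ds\bigr)^2$ respectively. The starting observation is that, by Fubini, $X_\lambda=\lambda\int_{[0,t]^2}h(s,s')\mathbf{1}\{s\sim s'\}\,ds\,ds'$, where $s\sim s'$ means ``$s$ and $s'$ lie in the same interval of $\Pi$''; this event occurs iff $\mP$ places no point in the open segment with endpoints $s,s'$, so $\Pb(s\sim s')=e^{-\lambda|s-s'|}$.

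For the first moment this yields $\E[X_\lambda]=\lambda\int_{[0,t]^2}h(s,s')e^{-\lambda|s-s'|}\,ds\,ds'$, and the change of variables $u=s'-s$ rewrites the integral as $\int_0^t\int_{-s}^{t-s}h(s,s+u)\,\lambda e^{-\lambda|u|}\,du\,ds$. Since $\lambda e^{-\lambda|\cdot|}$ is an approximate identity of total mass $2$ and $h$ is continuous on the compact square $[0,t]^2$, dominated convergence yields $\E[X_\lambda]\to 2\int_0^t h(s,s)\,ds$.

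For the variance, the same reasoning gives
$$\E[X_\lambda^2]=\lambda^2\int_{[0,t]^4}h(s_1,s_1')h(s_2,s_2')\exp\bigl(-\lambda|[a_1,b_1]\cup[a_2,b_2]|\bigr)\,ds_1\,ds_1'\,ds_2\,ds_2',$$
where $[a_i,b_i]:=[\min(s_i,s_i'),\max(s_i,s_i')]$. Writing $d:=|[a_1,b_1]\cap[a_2,b_2]|$ for the overlap length, one factors the exponential as $e^{-\lambda(b_1-a_1+b_2-a_2)}e^{\lambda d}$; replacing $e^{\lambda d}$ by $1$ reproduces $(\E[X_\lambda])^2$ exactly, so
$$\var(X_\lambda)=\lambda^2\int_{[0,t]^4}h(s_1,s_1')h(s_2,s_2')\,e^{-\lambda(b_1-a_1+b_2-a_2)}(e^{\lambda d}-1)\,ds_1\,ds_1'\,ds_2\,ds_2',$$
an integral supported on the overlap set $\{d>0\}$. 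Splitting this domain by the signs of $s_i'-s_i$ and the ordering of $s_1,s_2$ reduces the estimate to a finite number of subintegrals of the same type; on each of them the rescaling $s_i'-s_i=v_i/\lambda$, $s_2-s_1=y/\lambda$ introduces a Jacobian $\lambda^{-3}$ and converts the integrand into a bounded function of $(s_1,y,v_1,v_2)$ whose $(y,v_1,v_2)$-integral is an explicit finite constant. This yields $\var(X_\lambda)=O(\lambda^{-1})$, hence $L^2(P)$ and \emph{a fortiori} $L^1(P)$ convergence.

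The main obstacle is the overlap computation: the exponential factor $e^{-\lambda(b_1-a_1+b_2-a_2-d)}$ concentrates all four integration variables within a window of width $O(1/\lambda)$, making the $\lambda^{-1}$ decay heuristically transparent, but organising the case analysis by orderings of the $s_i, s_i'$ so as to exhibit the universal Jacobian $\lambda^{-3}$ and to confirm absolute integrability of the rescaled integrand requires some careful bookkeeping.
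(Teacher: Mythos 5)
Your proof is correct, and it takes a genuinely different route from the paper's. The paper's argument discretizes $[0,t]$ into $K$ uniform blocks, freezes $h$ at grid points, computes $\Ex\bigl[\sum_{I}|I|^2\bigr]$ over each block exactly via order statistics of the uniform law (Lemma~\ref{lemA2}), controls the stochastic fluctuation of $\lambda\sum_I|I|^2$ through a renewal representation and Wald's identity, and finally lets $K\to\infty$ using the modulus of continuity of $h$; this yields $L^1$ convergence with no explicit rate. You instead write the statistic as $\lambda\int_{[0,t]^2}h(s,s')\1\{s\sim s'\}\,ds\,ds'$ and exploit the exact avoidance probabilities $\Pb(s\sim s')=e^{-\lambda|s-s'|}$ and $\Pb(s_1\sim s_1',\,s_2\sim s_2')=e^{-\lambda|[a_1,b_1]\cup[a_2,b_2]|}$, which reduces the mean to an approximate-identity computation and the variance to a four-fold integral localized near the diagonal. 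Your variance estimate is sound: bounding $|h|$ by $\|h\|_\infty$ and using $e^{-\lambda(\ell_1+\ell_2)}(e^{\lambda d}-1)\le e^{-\lambda|[a_1,b_1]\cup[a_2,b_2]|}$ with $\ell_i=b_i-a_i$, the integral over overlapping configurations is $O(t\lambda^{-3})$ (after symmetrizing so that $a_1$ is the leftmost endpoint, the three remaining coordinates are confined to a window of width of order $\lambda^{-1}$ by the exponential), whence $\var(X_\lambda)=O(\lambda^{-1})$; the careful case bookkeeping you flag is real but routine. Your route buys strictly more than the paper's: $L^2$ rather than $L^1$ convergence, a quantitative $O(\lambda^{-1/2})$ bound on the stochastic fluctuation, and no recourse to Lemma~\ref{lemA2} or Wald's identity. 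What the paper's blockwise scheme buys in exchange is reusability: the same decomposition, combined with Bernstein-type concentration, is the template for the harder statements in Lemmas~\ref{lemA6}--\ref{lemA8}, where an exact two-point or four-point function is no longer available.
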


\begin{proof}
Let $K$ be a positive integer and let as denote by
\begin{equation}\label{whdelta}
w_h(\delta)=\max\{|h(s,s')-h(u,u')| : (s,s',u,u')\in [0,T]^4 \text{ and }|s-u|\le \delta, |s'-u'|\le \delta\}
\end{equation}
the modulus of continuity of $h$. Since $h$ is continuous and $[0,t]^2$ is compact, we have $w_h(t/K)\to 0$
as $K\to\infty$.

It holds that $\lambda\sum_{I\in\Pi}\int_{I\times I} h(s,s')\,ds\,ds'=2\int_0^t h(s,s)\,ds+\calt_1+\calt_3+\calt_3 $
with
\begin{align*}
\calt_1&=\lambda\sum_{I\in\Pi}\int_{I\times I} h(s,s')\,ds\,ds'-\lambda\sum_{i=1}^K h\Big(\frac{it}{K},\frac{it}{K}\Big)\sum_{I\in\Pi_i^K} |I|^2,\\
\calt_2&=\sum_{i=1}^K h\Big(\frac{it}{K},\frac{it}{K}\Big)\Big(\lambda\sum_{I\in\Pi_i^K} |I|^2-\frac{2t}{K}\Big),\\
\calt_3&=2\sum_{i=1}^K \frac{t}K\, h\Big(\frac{it}{K},\frac{it}{K}\Big)-2\int_0^t h(s,s)\,ds,
\end{align*}
where $\Pi_i^K$ is the restriction of the Poisson process $\mP$ on the interval $[(i-1)t/K,it/K]$.
For the first term, easy algebra yields
\begin{align*}
\Ex[|\calt_1|]&\le \lambda\|h\|_\infty\Ex\Big[\sum_{I\in \Pi} |I|^2-\sum_{i=1}^K\sum_{I\in\Pi_i^K}|I|^2\Big]+\lambda w_h(t/K)\sum_{i=1}^K
\Ex\Big[\sum_{I\in\Pi_i^K}|I|^2\Big].
\end{align*}
This inequality combined with Lemma \ref{lemA2} implies that
\begin{align*}
\limsup_{\lambda\to\infty} \Ex[|\calt_1|]&\le \limsup_{\lambda\to\infty} \Big(\lambda\|h\|_\infty
\frac{K}{\lambda^2}+\lambda w_h(t/K)\frac{2t}{\lambda}\Big)=2tw_h(t/K).
\end{align*}
In order to bound $\Ex[|\calt_2|]$, we evaluate $\Ex[|\lambda\sum_{I\in\Pi_i^K} |I|^2-\frac{2t}{K}|]$. The value of this term being
independent of $i$, we only evaluate the term corresponding to $i=1$. Let $\{\zeta_j,\;j\in\bbN\}$ be a family of i.i.d.\ exponentially distributed
random variables with scaling parameter one and let $N=\min\{k:\zeta_1+\ldots+\zeta_k\ge npt/K\}$. Then
$$
\Big|\lambda\sum_{I\in\Pi_i^K} |I|^2-\frac{2t}{K}\Big|\le
\frac1{\lambda}\Big|\sum_{j=1}^N (\zeta_j^2-2)\Big|+\Big|\frac{2(N-1)}{\lambda}-\frac{2t}{K}\Big|+
\frac{\zeta_N^2+2}{\lambda}.
$$
Note that $\Ex[\zeta_N^2]=6$ by virtue of Lemma~\ref{lemA 1.5}.
In view of the Cauchy-Schwarz inequality and Wald's identity \cite[Ch.\ VII, Thm. 3, Eq. (15)]{Shir}, we get
$\Ex[|\sum_{j=1}^N(\zeta_j^2-2)|]\le [\var(\zeta_1^2)\,\Ex(N)]^{1/2}=O(\lambda^{1/2})$.
Finally, it is clear that $|\calt_3|\le 2t\,w_h(t/K)$. Putting these estimates together, we
get $\limsup_{\lambda\to\infty} \Ex[|\calt_1+\calt_2+\calt_3|]\le 4tw_h(t/K)$. Using the fact that
$w_h(t/K)$ tends to zero as $K\to\infty$, we arrive at the desired result.
\end{proof}

\begin{lemma}\label{lemA6}
Let $t>0$  and let ${\mP}^i$, $i=1,2$, be two Poisson processes on $[0,t]$ with intensities $\lambda_i$, $i=1,2$.
Let  $\Pi^i$ be the random partition of $[0,t]$ generated by $\mP^i$, $i=1,2$ and let $\lambda_0=\lambda_1\lambda_2/(\lambda_1+\lambda_2)$.
For every continuous function $h:[0,t]^2\to\bbR$ there exists a constant $C>0$ such that for every
$x\in[C\log \lambda_0, C\lambda_0^{1/6}]$ the inequality
\begin{align*}
\Pb\bigg(\Big|\lambda_0\!\sum_{I\!,J}\! \KIJ\int_{I\times J} h(s,s')- 2\int_0^t h(s,s)\,ds\Big|\ge
\frac{x}{\sqrt{\lambda_0}}+Cx\Big(\frac1\lambda_0+w_h\Big(\frac{x}{\lambda_0}\Big)\!\Big)\!\bigg)
\le C\lambda_0 e^{-{x}/{C}}
\end{align*}
holds for sufficiently large $\lambda_0$, with $w_h(\cdot)$ being defined by (\ref{whdelta}).
\end{lemma}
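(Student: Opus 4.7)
The proof parallels that of Proposition~\ref{propo5}, now with a bivariate continuous function $h$ and two independent Poisson processes. Set $\delta=x/\lambda_0$, $N=\lceil t/\delta\rceil$, and $L_i=[(i-1)\delta,i\delta]\cap[0,t]$. Pick $l_i\in L_i$ so that $|L_i|h(l_i,l_i)=\int_{L_i}h(s,s)\,ds$. Let $\cale$ be the event that every sub-sub-interval $[(i-1)t/(4N),it/(4N)]$ contains at least one point of each of $\mP^1$ and $\mP^2$; a union bound gives $\Pb(\cale^c)\le C\lambda_0 e^{-x/C}$ whenever $x\ge C\log\lambda_0$. On $\cale$, every $I\in\Pi^1$ and every $J\in\Pi^2$ has length at most $\delta/2$, so $\KIJ=1$ forces $I$ and $J$ to lie in the same or adjacent $L_i$'s.

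Introduce the localized variable
\[
\eta_i := \lambda_0\, h(l_i,l_i)\sum_{I,J}\KIJ |I||J|\,\mathbf{1}_{\{a_I\in L_i\}},
\]
and decompose $\lambda_0\sum_{I,J}\KIJ\int_{I\times J}h(s,s')\,ds\,ds' - 2\int_0^t h(s,s)\,ds = \calt_1 + \calt_2 + \calt_3$, where $\calt_1$ is the error of replacing $h(s,s')$ by $h(l_i,l_i)$ on pairs $(I,J)$ with $a_I\in L_i$ (bounded by $Cx\,w_h(\delta)$ using $\sum_{I,J}\KIJ|I||J|\le Ct\delta$ on $\cale$), $\calt_2=\sum_i(\eta_i-\Ex^\cale[\eta_i])$ is the stochastic fluctuation, and $\calt_3=\sum_i\Ex^\cale[\eta_i]-2\int_0^t h(s,s)\,ds$ is the bias. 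Lemma~\ref{lemA4}, Wald's identity and the computations in Proposition~\ref{propo5} yield $\Ex^\cale[\eta_i]=2|L_i|h(l_i,l_i)+O(\lambda_0^{-2})$ for interior $L_i$, hence $|\calt_3|=O(1/\lambda_0)$.

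To control $\calt_2$ we apply Bernstein's inequality after the classical even/odd splitting: on $\cale$, the $\eta_{2i}$'s (respectively $\eta_{2i+1}$'s) depend on disjoint $\delta$-neighborhoods and are therefore conditionally independent given $\cale$. The deterministic bound $|\eta_i|\le C\|h\|_\infty x^2/\lambda_0$ on $\cale$ is immediate, but the key obstacle is the sharp variance estimate $\mathrm{Var}^\cale(\eta_i)\le Cx/\lambda_0^2$: the crude bound $\mathrm{Var}(\eta_i)\le C\|h\|_\infty (x^2/\lambda_0)\cdot \Ex|\eta_i|$ yields only $O(x^3/\lambda_0^2)$, too weak to match the target tail $e^{-x/C}$. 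The improvement requires centering via Lemma~\ref{lemA4} (so that $\sum_J\KIJ|J|-|I|-2/\lambda_2$ becomes mean-zero) together with the fact that $|I|^2-2/\lambda_1^2$ has variance of order $\lambda_1^{-4}$; summed over the $\asymp\lambda_1\delta$ intervals with $a_I\in L_i$ this yields a pre-variance of order $\delta/\lambda_1^3$, and multiplying by $\lambda_0^2$ gives $\mathrm{Var}^\cale(\eta_i)=O(x/\lambda_0^2)$. Summing over $i$ yields $\sum_i\mathrm{Var}^\cale(\eta_i)\le Ct/\lambda_0$, and Bernstein's inequality then delivers $\Pb^\cale(|\calt_2|>x/\sqrt{\lambda_0})\le Ce^{-x/C}$ in the stated range $x\in[C\log\lambda_0,\,C\lambda_0^{1/6}]$ (the upper bound ensures Bernstein's sub-Gaussian regime $u<V/M$). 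Combining with the bounds on $\Pb(\cale^c)$, $\calt_1$, and $\calt_3$ finishes the proof.
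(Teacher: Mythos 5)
Your proof follows essentially the same route as the paper's: blocking $[0,t]$ at scale $x/\lambda_0$, conditioning on the event that every quarter-block contains points of both processes, localizing via the variables $\eta_i$, controlling the bias through Lemma~\ref{lemA4} and Wald's identity, and applying Bernstein's inequality after the even/odd conditional-independence splitting. The only (immaterial) difference is that the paper gets by with the raw second-moment bound $\Ex^\cale[(\eta_i^\circ)^2]=O(x^2\lambda_0^{-2})$ rather than your sharper variance estimate $O(x\lambda_0^{-2})$; either suffices for the stated tail in the range $x\le C\lambda_0^{1/6}$.
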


\begin{proof} W.l.o.g.\ we assume that $t=1$.
Set $\calt=\lambda_0\sum_{I\in\Pi^1,J\in\Pi^2} \KIJ\int_{I\times J} h(s,s')\,ds\,ds'$ and $\bar h(s)=h(s,s)$.
Let us denote by $N(x)=\lceil \lambda_0/x\rceil$ the smallest positive
integer such that $N(x)x >\lambda_0$ and let us set
$L_i=[iN(x)^{-1},(i+1)N(x)^{-1}]$. The intervals $L_i$ define a
uniform deterministic partition of $[0,1]$ with a mesh-size of order
$x/\lambda_0$. Let $\cale$ be the event ``for every $i=1,\ldots,4N(x)$, the
interval $[\frac{i}{4N(x)},\frac{(i+1)}{4N(x)}]$ contains at least
one point from $\Pi^1$ and one point from $\Pi^2$''. The total
probability formula implies that
\begin{align*}
\Pb\bigg(\Big|\calt- 2\int_0^1 \bar h(s)ds\Big|\ge \frac{x}{\sqrt{\lambda_0}}\bigg)
\le \Pb\bigg(\Big|\calt- 2\int_0^1 \bar h(s)\,ds\Big|\ge
\frac{x}{\sqrt{\lambda_0}}\;\Big|\;\cale\bigg)+\Pb(\cale^c),
\end{align*}
where $\cale^c$ denotes the complementary event of $\cale$. Easy computations show that, for some $C>0$, the inequality 
$\Pb(\cale^c)\le C \lambda_0 x^{-1} e^{-x/C}$ holds true.

Let now $l_i$ be a point in $L_i$ such that $\int_{L_i}\bar
h(t)\,dt=\bar h(l_i)|L_i|$ and let $a_I$ be the left endpoint of $I$.
We define the random variables
$$
\eta_i^\circ=\lambda_0\bar h({l_i})\sum_{I\!,J} |I||J|\KIJ\1_{\{a_I\in
L_i\}},\quad i=1,\ldots,N(x),
$$
and write $\calt_1=\calt_{11}+\calt_{12}+\calt_{13}+\mO(\lambda_0 |L_1|w_h(|L_1|))$
on $\cale$, where
\begin{align*}
\calt_{11}&=\Ex^\cale\bigg[\sum_{i=1}^{N(x)}\eta_i^\circ\bigg]-2\int_0^1\bar
h(s) \,ds,\quad
\calt_{1s}&=\sum_{i=1}^{[N(x)/2]}(\eta_{2i+s-2}^\circ-\Ex^\cale[\eta_{2i+s-2}^\circ]),\quad s=2,3.
\end{align*}
Let us emphasize that for evaluating the remainder term in $\calt_1$, we have used the
fact that $r=\max_{I\in\Pi^1}|I|\vee\max_{J\in\Pi^2}|J| \le |L_1|/2$ on $\cale$.

On the one hand, since $|\sum_{i=1}^{N(x)}\eta^\circ_i|\le C\lambda_0 r$,
we have
$$
\bigg|\Ex^\cale\Big[\sum_{i=1}^{N(x)}\eta_i^\circ\Big]-\Ex\Big[\sum_{i=1}^{N(x)}\eta_i^\circ\Big]\bigg|
\le \frac{\lambda_0\Ex[r\1_{\cale^c}]}{\Pb(\cale)}.
$$
Using the inequality of Cauchy-Schwarz, as well as the bounds
$\Pb(\cale^c)\le C\lambda_0 e^{-x/C}$ and (\ref{r_np}), we get $\big|
\Ex^\cale \big[ \sum_{i=1}^{N(x)} \eta_i^\circ\big] - \Ex\big
[\sum_{i=1}^{N(x)} \eta_i^\circ\big] \big| \le C\lambda_0 e^{-x/C}$, for some
constant $C$ and for every $x>C\log \lambda_0$.

On the other hand, in view of Lemma~\ref{lemA4}, we have
$$
\Ex[\eta_i^\circ]\le \lambda_0\bar h(l_i)\Ex\bigg[\sum_{I:a_I\in
L_i}\Big(|I|^2+\frac{2|I|}{\lambda_2}\Big)\bigg]\le C
\lambda_0\bar h(l_i)(\lambda_1^{-1}+\lambda_2^{-1})|L_i|=\mO(x\lambda_0^{-1}).
$$
Using once again Lemma~\ref{lemA4}, we get
\begin{align*}
\Ex\Big[\sum_{i=1}^{N(x)}\eta_i^\circ\Big]&=\sum_{i=2}^{N(x)-1}
\lambda_0\bar h(l_i)\Ex\Big[\sum_{I:a_I\in
L_i}|I|\cdot\Ex^{\Pi^1}\Big(\sum_{j\in\Pi^2}\KIJ |J|\Big)\Big]+\mO(x\lambda_0^{-1})\\
&=\sum_{i=2}^{N(x)-1} \lambda_0\bar h(l_i)\Ex\Big[\sum_{I:a_I\in
L_i}\big(|I|^2+2|I|/\lambda_2\big)\Big]+\mO(x\lambda_0^{-1}).
\end{align*}
Wald's equality yields
\begin{align}
\label{boundIk}
\Ex\Big[\sum_{I:a_I\in L_i} |I|^k\Big]=k!|L_i|\lambda_1^{1-k}+\mO(\lambda_1^{-k}),
\end{align}
for every $k>0$ and for every $i\le N(x)-1$. Putting all these estimates together, we get
\begin{align*}
\Ex\Big[\sum_{i=1}^{N(x)}\eta_i^\circ\Big]&=\sum_{i=2}^{N(x)-1}
n\lambda_0\bar h(l_i)\Big(\frac{2|L_i|}{\lambda_1}+\frac{2|L_i|}{\lambda_2}\Big)+\mO(x\lambda_0^{-1})
=\sum_{i=1}^{N(x)} 2\bar h(l_i)|L_i|+\mO(x\lambda_0^{-1}).
\end{align*}
Since $l_i$ is chosen to verify $\bar h(l_i)|L_i|=\int_{L_i}
\bar h(t)\,dt$, we get
$\calt_{11}=\mO(x\lambda_0^{-1})$.

The advantage of working with $\eta^\circ_i$s is that, conditionally
to $\cale$, the random variables $\eta^\circ_{2i}$,
$i=1,\ldots,[N(x)/2]$, are independent. Indeed, one easily checks
that conditionally to $\cale$, $\eta^\circ_{2i}$ depends only on the
restrictions of $\mP^{1}$ and $\mP^{1}$ onto the interval
$[\frac{(4i-1)}{2N(x)},\frac{(4i+3)}{2N(x)}]$. Since these
intervals are disjoint for different values of $i\in\bbN$, the
restrictions of Poisson processes $\mP^{k}$, $k=1,2$, onto these
intervals are independent. Therefore, $\eta_{2i}^\circ$,
$i=1,\ldots,[N(x)/2]$, form a sequence of random variables that are
independent conditionally to $\cale$. Moreover, conditionally to $\cale$,
they verify $|\eta_i^\circ|\le C\lambda_0 r |L_i|\le C x^2/\lambda_0$.
One can also check that $\Ex^\cale[(\eta_i^\circ)^2]=\mO(x^2\lambda_0^{-2})$.

These features enable us to use the Bernstein inequality in order to
bound large deviations of $\calt_{12}$ as follows:
\begin{align*}
\Pb^\cale\big(|\calt_{12}|\ge x/\sqrt{\lambda_0}\big)&\le
2\exp\Big({-\frac{x^2/(2\lambda_0)}{C(N(x)x^2\lambda_0^{-2}+x^3\lambda_0^{-3/2})}}\Big)\le 2e^{-x/C},\quad
\forall\,x\in [1,\lambda_0^{1/6}]
\end{align*}
Obviously, the same inequality holds true for the
term $\calt_{13}$. These inequalities combined with the bound on the
deterministic error term $\calt_{11}$ complete the proof.
\end{proof}

\begin{lemma}\label{lemA7}
Let $T>0$  and let $\mP_n^i$, $i=1,2$, be two Poisson processes on $[0,T]$ with intensities $n p_i$, $i=1,2$.
For every continuous function $h:[0,T]^3\to\bbR$, it holds that
\begin{align*}
n^2\sum_{I\in\Pi^1_n}\int_{I\times J(I)\times J(I)} h(s,t,u)\,ds\,dt\,du\xrightarrow[n\to\infty]{P} \Big(\frac{6}{p_1^2}+\frac{8}{p_1p_2}+\frac{6}{p_2^2}\Big)
\int_0^T h(s,s,s)\,ds.
\end{align*}
\end{lemma}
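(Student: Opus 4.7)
The plan is to imitate the strategy of Lemma~\ref{lemA5}: reduce the triple integral to a simpler scalar expression using uniform continuity of $h$, compute the expectation using moment identities for the exponential distribution together with Wald's equation, and upgrade to convergence in probability by a partitioning argument in the style of Lemma~\ref{lemA6}. First observe that $J(I)=\bigcup_{J\in\Pi_n^2:\KIJ=1}J$ is always an interval of the form $[T_L(I),T_R(I)]$ containing $I$, with $|J(I)|\le 3r_n$. Uniform continuity of $h$ on $[0,T]^3$ then yields
\[
\Big|n^2\sum_{I\in\Pi^1_n}\int_{I\times J(I)\times J(I)}\!\!h(s,t,u)\,ds\,dt\,du\,-\,n^2\sum_{I\in\Pi^1_n}|J(I)|^2\int_I h(s,s,s)\,ds\Big|\le w_h(3r_n)\,n^2\sum_I|I|\,|J(I)|^2,
\]
where $w_h$ denotes the modulus of continuity of $h$. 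Since $r_n\to 0$ in probability by Lemma~\ref{lem2} and the right-hand sum will be shown to have bounded expectation, this discrepancy is $o_p(1)$.

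The heart of the argument is the expectation computation. Conditionally on $I=[a,b]$ being interior to $[0,T]$, the independence of $\mP_n^1$ and $\mP_n^2$ combined with the memoryless property of the exponential distribution shows that $U=a-T_L(I)$ and $V=T_R(I)-b$ are i.i.d.\ $\msE(np_2)$ random variables independent of $|I|\sim\msE(np_1)$. Expanding $|J(I)|^2=(U+|I|+V)^2$ and using $\Ex[X^k]=k!\mu^{-k}$ for $X\sim\msE(\mu)$ gives
\[
\Ex\bigl[|I|\,|J(I)|^2\,\bigm|\,I\text{ interior}\bigr]=\frac{1}{n^3p_1}\Big(\frac{6}{p_1^2}+\frac{8}{p_1p_2}+\frac{6}{p_2^2}\Big)+\mO(n^{-4}),
\]
and combining with Wald's identity $\Ex[\#\{I\in\Pi^1_n:a_I\in L\}]=np_1|L|+\mO(1)$ for a sub-interval $L\subset[0,T]$ yields
\[
n^2\,\Ex\Big[\sum_{I:a_I\in L}|I|\,|J(I)|^2\,g(a_I)\Big]\;\xrightarrow[n\to\infty]{}\;\Big(\frac{6}{p_1^2}+\frac{8}{p_1p_2}+\frac{6}{p_2^2}\Big)\int_L g(s)\,ds
\]
for every continuous $g:[0,T]\to\bbR$. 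Boundary intervals near $\{0,T\}$, where $U$ or $V$ would be truncated, contribute corrections that are exponentially small in $n$ and are discarded as in Proposition~\ref{propo5} and Lemma~\ref{lemA6}.

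The final step is to upgrade expectation convergence to convergence in probability. Following the strategy of Lemma~\ref{lemA6}, partition $[0,T]$ into $K$ equal pieces $L_i=[(i-1)T/K,iT/K]$, pick representatives $l_i\in L_i$ with $h(l_i,l_i,l_i)|L_i|=\int_{L_i}h(s,s,s)\,ds$, and decompose the main sum as $\sum_i\eta_i+\text{error}$ with $\eta_i=n^2 h(l_i,l_i,l_i)\sum_{I:a_I\in L_i}|I|\,|J(I)|^2$; the error is controlled by $w_h(T/K)$ plus boundary terms of order $Kr_n$. On the separation event $\cale$ of Lemma~\ref{lemA6}, whose complement has probability exponentially small in $n/K$, the even-indexed $\eta_{2j}$ depend on disjoint portions of $(\mP_n^1,\mP_n^2)$ and are thus conditionally independent; each is bounded by $\mO(\log^3 n/K)$ with high probability thanks to the exponential tail bound for $nr_n$, so Chebyshev's inequality applied to the alternating sums $\sum_j\eta_{2j}$ and $\sum_j\eta_{2j+1}$ yields concentration around the mean. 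Sending first $n\to\infty$ and then $K\to\infty$, so that $w_h(T/K)$ and $Kr_n$ both vanish, completes the argument. The main technical obstacle is the bookkeeping for the boundary intervals — those straddling two consecutive $L_i$ or lying near $\{0,T\}$ — but the exponential tail estimates of Proposition~\ref{propo5} and Lemma~\ref{lemA6} apply essentially verbatim and keep these contributions negligible.
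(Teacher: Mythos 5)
Your proposal follows essentially the same route as the paper: reduce the triple integral to $n^2\sum_I |I|\,|J(I)|^2$ weighted by $h$ on the diagonal, compute the conditional expectation of $|J(I)|^2$ via the decomposition $|J(I)|=U+|I|+V$ with independent exponential overhangs $U,V\sim\msE(np_2)$, and conclude by a blocking argument. Your key moment computation is correct and yields exactly the paper's constant $\frac{6}{p_1^2}+\frac{8}{p_1p_2}+\frac{6}{p_2^2}$ (the paper organizes it as $\Ex^{\Pi^1}[|J(I)|^2]=|I|^2+4|I|(np_2)^{-1}+6(np_2)^{-2}+\cdots$ followed by the identities $\Ex[\sum_{I:a_I\in L_i}|I|^k]=k!\,|L_i|(np_1)^{1-k}+\mO((np_1)^{-k})$, which is the same calculation).

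The one step you should tighten is the order of limits in the concentration argument. With the crude bound $|\eta_i|=\mO(\log^2 n/K)$ on the good event, Chebyshev applied to the conditionally independent even-indexed blocks gives a variance of order $K\cdot(\log^2 n/K)^2=\log^4 n/K$, which does \emph{not} vanish as $n\to\infty$ for fixed $K$; so ``first $n\to\infty$, then $K\to\infty$'' does not close this step as stated. The paper sidesteps this by taking the number of blocks $N=[n^{1-\eps}]$ to grow with $n$ (and then controls $\var[\sum_i\eta_i^\circ]$ via near-independence of blocks at distance $>2$ rather than even/odd splitting); your Lemma~\ref{lemA6} template does the same with $N(x)\asymp\lambda_0/x$. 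Replacing your fixed $K$ by such an $n$-dependent choice (the modulus-of-continuity and boundary errors still vanish since $r_n=o_p(1/K_n)$ for $K_n=n^{1-\eps}$) repairs the argument; everything else is sound.
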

\begin{proof}
Let us denote $\calt_n=n^2\sum_{I\in\Pi^1_n}\int_{I\times J(I)\times J(I)} h(s,t,u)\,ds\,dt\,du$ and let us consider the
uniform partition $\{L_i=\big[(i-1)/N, i/N\big),\,i=1,\ldots,N\}$ with $N=[n^{1-\eps}]$ slightly smaller than $n$ ($\eps$
is a small positive number). For every integer $i$ smaller than $[n^{1-\eps}]$, we define $l_i$ as the real number such that $\bar h(l_i)=|L_i|^{-1}\int_{L_i} \bar h_s\,ds$, where $\bar h_s=h(s,s,s)$. The continuity of $h$ implies that
$$
\calt_n=n^2(1+o(1))\sum_{i=1}^{N}\sum_I \bar h(l_i) |I|\,|J(I)|^2\1_{L_i}(a_I).
$$
For every $i$, we set $\eta_i^\circ=n^2\sum_I \bar h(l_i) |I|\,|J(I)|^2\1_{L_i}(a_I)$. We first remark that
$$
\Ex\Big[\sum_I \bar h(l_i) |I|\,|J(I)|^2\1_{L_i}(a_I)\Big]=N^{-1}O(\Ex[r_n^2]),\qquad\forall i=1,\ldots,N.
$$
Let now $i\in{2,\ldots,N-1}$ and $I$ be an interval of $\Pi^1$ satisfying $a_I\in L_i$, then
$\big||J(I)|-|I|-\xi_1^\circ-\xi_2^\circ\big|\le (\xi_1^\circ-N^{-1})_++(\xi_2^\circ-N^{-1})_+ $, where $\xi_1^\circ$ and $\xi_2^\circ$ are two random variables distributed according to the exponential distribution with parameters $np_2$ conditionally to $\Pi^1$. Moreover, conditionally
to $\Pi^1$, $\xi_1^\circ$ and $\xi_2^\circ$ are independent. Since $N=O(n^{1-\eps})$ and $\Ex^{\Pi^1}[(\xi^\circ_j)^4]=O(n^{-4})$, by the
Cauchy-Schwarz inequality we have $\Ex^{\Pi^1}[(\xi_j^\circ-N^{-1})_+^2]=O(n^{-2-4\eps})$ for $j=1,2$. This implies that $\Ex^{\Pi^1}[|J(I)|^2]=|I|^2+4|I|(np_2)^{-1}+6(np_2)^2+O(|I|n^{-1-2\eps})$. Combining this estimate with (\ref{boundIk}), we get
$$
\Ex[\eta_i^\circ]=\bar h(l_i)|L_i|\Big(\frac{6}{p_1^2}+\frac{8}{p_1p_2}+\frac{6}{p_2^2}\Big)+n^2|L_i|O(n^{-1-2\eps})
=\Big(\frac{6}{p_1^2}+\frac{8}{p_1p_2}+\frac{6}{p_2^2}\Big)\int_{L_i} \bar h(s)\,ds+o(1).
$$
By reasoning in a similar way, we get
$\Ex[\eta_i^\circ\eta_{j}^\circ]-\Ex[\eta_i^\circ]\Ex[\eta_j^\circ]= o(|L_i|^2)$
as soon as $|i-j|>2$. Standard arguments imply that $\var[\sum_i \eta_i^\circ]=O(N\max_i\var(\eta_i^\circ))+o(N^2|L_1|^2)$.
Since $|\eta_i^\circ|\le C(nr_n)^2 |L_1|$ for every $i$, we get $\var[\sum_i \eta_i^\circ]=O(N|L_1|^2\Ex[(nr_n)^4])+o(1)=o(1)$
and the desired convergence property follows from the convergence of $\calt_n$ in $L^2$.
\end{proof}

\begin{lemma}
\label{lemA8}
Let $T>0$  and let $\mP_n^i$, $i=1,2$, be two Poisson processes on $[0,T]$ with intensities $n p_i$, $i=1,2$.
There is a constant $\nu(p_1,p_2)$ depending only on $p_1$ and $p_2$ such that for every continuous function
$h:[0,T]^3\to\bbR$
\begin{align*}
n^2\sum_{I\in\Pi^1_n}\sum_{J\in\Pi^2_n}\int_{I(J)\times J(I)\times I\cap J} h(s,t,u)\,ds\,dt\,du\xrightarrow[n\to\infty]{P} \nu(p_1,p_2)
\int_0^T h(s,s,s)\,ds.
\end{align*}
\end{lemma}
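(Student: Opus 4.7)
The plan is to adapt the localization-plus-moment scheme that was used in Lemmas \ref{lemA5}--\ref{lemA7} to the triple of lengths $(|I(J)|,|J(I)|,|I\cap J|)$. The argument proceeds in three stages: (i) a Riemann-sum reduction, (ii) identification of the limiting mean through a Palm-type calculation for Poisson processes, and (iii) a variance bound.

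First, I would fix a small parameter $\eps>0$ and introduce the uniform partition of $[0,T]$ into $N=[n^{1-\eps}]$ subintervals $L_i=[(i-1)T/N,iT/N)$. Choose $l_i\in L_i$ so that $\bar h(l_i):=h(l_i,l_i,l_i)$ satisfies $\bar h(l_i)|L_i|=\int_{L_i}h(s,s,s)\,ds$. Using the uniform continuity of $h$ on $[0,T]^3$ together with the fact that on the high-probability event $\cale$ of Lemma \ref{lemA6} every triple $(I(J),J(I),I\cap J)$ has diameter at most $r_n\ll|L_i|$, the left-hand side of the statement differs by $o_p(1)$ from
\begin{equation*}
\calt_n=n^2\sum_{i=1}^N \bar h(l_i)\sum_{(I,J):(I\cap J)\subset L_i}|I(J)|\cdot|J(I)|\cdot|I\cap J|,
\end{equation*}
since pairs $(I,J)$ whose intersection straddles the boundary of some $L_i$ contribute negligibly.

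Second, to evaluate $\Ex[\calt_n]$, I would fix $i\in\{2,\ldots,N-1\}$ and condition on the number of Poisson points of each $\mP^{k,n}$ in a neighbourhood of $L_i$. Using the Palm-theoretic description of the Poisson processes and the memoryless property of the exponential waiting times (as in Lemma \ref{lemA4}), for a typical pair $(I,J)$ with $I\cap J\cap L_i\neq\varnothing$ the joint law of $(|I(J)|,|J(I)|,|I\cap J|)$ is asymptotically that of $(X_1+Y_1,X_2+Y_2,X_1\wedge X_2)$ modulo boundary corrections, where $X_k,Y_k\sim\msE(np_k)$ for $k=1,2$ are mutually independent. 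A direct computation of the resulting third-order moment then gives
\begin{equation*}
\Ex[\calt_n]=\nu(p_1,p_2)\int_0^T h(s,s,s)\,ds+o(1),
\end{equation*}
for an explicit rational function $\nu(p_1,p_2)$ of $p_1$ and $p_2$; the error estimates are controlled exactly as in the proof of Lemma \ref{lemA7} using $\Ex[r_n^k]=O(n^{-k}\log^k n)$ from Lemma \ref{lem2}.

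Third, for the variance I would exploit, as in Lemmas \ref{lemA6} and \ref{lemA7}, that conditional on $\cale$ the random variables $\{\eta_i^\circ:i\text{ even}\}$, where $\eta_i^\circ=n^2\bar h(l_i)\sum_{(I,J)\subset L_i}|I(J)||J(I)||I\cap J|$, are mutually independent because they depend on disjoint restrictions of the two Poisson processes; the same holds for odd $i$. Combined with the deterministic bound $|\eta_i^\circ|\le C(nr_n)^3|L_i|$ on $\cale$, this yields a variance estimate of order $N\cdot(nr_n)^4|L_1|^4=o(1)$ for $\eps$ small enough, which together with the mean computation gives convergence of $\calt_n$ to $\nu(p_1,p_2)\int_0^T h(s,s,s)\,ds$ in $L^2$, hence in probability.

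The main obstacle will be the combinatorial Palm-type computation of $\nu(p_1,p_2)$: the three lengths $|I(J)|$, $|J(I)|$, $|I\cap J|$ are correlated through the pair $(I,J)$, and one must keep track of which of the two sampling times at the boundary of $I\cap J$ comes first when integrating against the Palm measure. Once this step is carried out, the remaining arguments are direct transpositions of those used in Lemma \ref{lemA7}.
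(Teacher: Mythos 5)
Your proposal is correct and follows essentially the route the paper intends: the paper in fact omits the proof of Lemma~\ref{lemA8} entirely, saying only that it rests on translation/scaling invariance of the Poisson law and independence over disjoint sets and is ``similar to the proofs of preceding lemmas'', which is precisely the localization--mean--variance scheme of Lemmas~\ref{lemA6} and~\ref{lemA7} that you transpose. The only caveat is that your tentative identification of the joint law of $(|I(J)|,|J(I)|,|I\cap J|)$ is not quite right as stated (each of $|I(J)|$ and $|J(I)|$ is the length of a union of several intervals straddling the other partition's cell), but since the lemma only asserts the existence of $\nu(p_1,p_2)$ --- which follows from stationarity alone, as you use --- and you explicitly flag this computation as the remaining bookkeeping, this does not affect the validity of the argument.
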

\begin{proof}
The proof of this lemma follows from the invariance of the law of a Poisson process under scaling
and translation, as well as from the independence of disjoint sets' measures. It is similar to the
proofs of preceding lemmas and therefore will be omitted.
\end{proof}

\small

\end{document}